\theoremstyle{plain}
\newtheorem{theorem}{Theorem}[section]
\newtheorem{prop}[theorem]{Proposition}
\newtheorem{lemma}[theorem]{Lemma}
\newtheorem{cor}[theorem]{Corollary}
\newcommand{\Brp}{{}_p\mathrm{Br}}
\newtheorem*{conj}{Conjecture}
\theoremstyle{definition}
\newtheorem{defin}[theorem]{Definition}
\theoremstyle{remark}
\newtheorem{remark}[theorem]{Remark}
\newcommand{\sheaf}[1]{\mathscr{#1}}
\newcommand{\LL}{\sheaf{L}}
\newcommand{\OO}{\sheaf{O}}
\newcommand{\EE}{\sheaf{E}}
\newcommand{\FF}{\sheaf{F}}
\newcommand{\CC}{\sheaf{C}}
\newcommand{\PP}{\sheaf{P}}
\newcommand{\Brtwo}{{}_2\mathrm{Br}}
\newcommand{\Pictwo}{{}_2\mathrm{Pic}}
\newcommand{\rPic}{{}_r\mathrm{Pic}}
\DeclareMathOperator{\End}{\mathrm{End}}
\newcommand{\Group}[1]{\mathbf{#1}}
\newcommand{\GL}{\Group{GL}}
\newcommand{\SL}{\Group{SL}}
\newcommand{\PGL}{\Group{PGL}}
\newcommand{\SOrth}{\Group{SO}}
\newcommand{\PSOrth}{\Group{PSO}}
\newcommand{\Sim}{\Group{Sim}}
\newcommand{\PSim}{\Group{PSim}}
\newcommand{\Z}{\mathbb Z}
\newcommand{\A}{\mathbb A}
\newcommand{\C}{\mathbb C}
\renewcommand{\P}{\mathbb P}
\DeclareMathOperator{\AAut}{\mathbf{Aut}}
\DeclareMathOperator{\Aut}{\mathrm{Aut}}
\DeclareMathOperator{\Br}{\mathrm{Br}}
\DeclareMathOperator{\Pic}{\mathrm{Pic}}
\DeclareMathOperator{\nPic}{\mathrm{_nPic}}
\DeclareMathOperator{\twoPic}{\mathrm{_2Pic}}
\newcommand{\gG}{\Group{G}}
\newcommand{\Gm}{\Group{G}_{\text{m}}}
\DeclareSymbolFont{cyrletters}{OT2}{wncyr}{m}{n}
\DeclareMathSymbol{\Sha}{\mathalpha}{cyrletters}{"58}
\begin{document}

\title[ Hyperelliptic curves]
{Period-index problem for hyperelliptic curves}

\author[Jaya]{J. N. \ Iyer}
\address{Institute of Mathematical Sciences \\ %
 C.I.T. Campus \\ %
  Taramani, Chennai 600113. India}
\email{jniyer@imsc.res.in}

\author[Parimala]{R.\ Parimala  \smallskip \\
\MakeLowercase{with an appendix by} S. Ramanan}
\address{Department of Mathematics \& Computer Science \\ %
Emory University \\ %
400 Dowman Drive~NE \\ %
Atlanta, GA 30322, USA}
\email{praman@emory.edu}

\address{Chennai Mathematical Institute \\
H1, SIPCOT IT Park,  \\
Siruseri Kelambakkam 603103
India}
\email{sramanan@cmi.ac.in}

\date{}

\begin{abstract} 
Let $C$ be a smooth projective curve of genus 2 over a number field $k$ with a rational point. 
We prove that the index and exponent coincide for elements in the 2-torsion of $\Sha(Br(C))$.  
 In the appendix,  an  isomorphism of the moduli space of rank 2 stable vector bundles with odd determinant 
 on a smooth projective hyperelliptic curve $C$ of genus $g$ with a rational point over any field of characteristic not two with the  Grassmannian of $(g-1)$-dimensional  linear subspaces in the base locus of a certain pencil of quadrics is established, making a result of (\cite{De-Ra}) rational. We establish  a twisted version  of this isomorphism and we derive as a consequence a weak Hasse principle for the smooth intersection $X$  of two quadrics in ${\mathbb P}^5$ over a number field:  if $X$ contains a line locally, then $X$ has a $k$-rational point. 
   \end{abstract} 
 
\maketitle

\date{}

Let $k$ be a field and $\Br(k)$  the  Brauer group of $k$. 
 There are two numerical 
invariants attached to a Brauer class $\alpha \in \Br(k)$;  $index(\alpha) = \sqrt{[D : k]}$ if 
$\alpha$ is represented by a central division algebra $D$ over $k$ and $period(\alpha) = $
order of $\alpha$ in $\Br(k)$. There has been extensive study of uniform bounds for index of algebras
in terms of their periods over fields which are of arithmetic or geometric interest (\cite{DJ}, \cite{Lb},
\cite{saltman}, \cite{HHK}). 
Let $p$ be a prime and $K$ a field of characteristic not equal to $p$.  The Brauer $p$-dimension of $K$
denoted by $\Br\! _p$dim$(K)$ is the least integer $d$ such that 
for every finite extension $L/K$ and every $\alpha \in ~_p\Br(L)$, index$(\alpha)$ divides $p^d$. 
Here $\Brp(L)$ denotes the $p$-torsion subgroup of $\Br(L)$. 

Bounding Brauer dimension  has deep consequences in the study of homogeneous spaces under 
connected  linear algebraic groups. A theorem of de-Jong/Lieblich that period =  index for 
function fields of surfaces over algebraically closed fields is critical  to the solution  of Conjecture II of 
Serre for exceptional groups of type $D_4$, $E_6$, $E_7$ due to Gille (\cite[IV.2]{G}). 
Bounding the Brauer $2$-dimension of function fields of all curves over totally imaginary number fields 
would lead to finiteness of the $u$-invariant of such fields (\cite{Lb-P-S}). 
Finiteness of the $u$-invariant of $k(t)$, $k$ totally imaginary number field, is an open question. 

Let $k$ be a totally imaginary field with   the ring of 
 integers $\OO$. Let  $C/k$  be a smooth projective geometrically integral
 curve over $k$ with  $\CC/\OO$  a regular proper model. 
 In (\cite{Lb-P-S}), finiteness of $\Br_p$dim$(k(C))$ for all $C/k$ is reduced to bounding 
 indices of $p$-torsion elements in $\Br(\CC)$,  i.e. `unramified elements' in $\Br(k(C))$, for all $C/k$. 
By a theorem of Grothendieck, for a smooth projective curve $C$ over $k$,  the image of  $\Br(\CC)$ is  zero in $\Br(k_\nu(C))$  for every finite  place 
  $\nu$ of $k$.  Let  $\Sha \Br(k(C)) = 
  Ker(H^2(k(C),  {\mathbb G}_m) \to \prod_{\nu \in \Omega_K} H^2(k_\nu(C), {\mathbb G}_m))$.
  Note that $\Sha \Br(k(C)) \subseteq \Br(\CC) \subseteq \Br(C)$ (\cite{P-Sj}) and $\Sha \Br(k(C)) = \Br(\CC)$ if $k$
  is a  totally imaginary number field. We state the following conjecture concerning the period/index  bounds for  elements in   $\Sha \Br(k(C))$:
  
 \begin{conj}
For elements in $\Sha \Br(k(C))$,   the index and period coincide.
 \end{conj}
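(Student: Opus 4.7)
The plan is to attack the conjecture via the geometry of moduli of twisted vector bundles on $C$, focusing on the case $p = 2$ and $C$ of genus $2$ where the tools announced in the abstract apply directly. Given $\alpha \in \Sha \Br(k(C))$ of period $2$, which by the inclusion $\Sha \Br(k(C)) \subseteq \Br(C)$ we view as an unramified class on $C$, I would consider the moduli space $M^\alpha$ of $\alpha$-twisted rank-$2$ stable vector bundles on $C$ with a fixed odd-degree determinant. A $k$-rational point of $M^\alpha$ produces an $\alpha$-twisted locally free sheaf of rank $2$, whose sheaf of endomorphisms is an Azumaya algebra of degree $2$ in the class of $\alpha$; this forces $\mathrm{index}(\alpha) \mid 2$, and since the period divides the index, equality follows. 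The whole proof is then arranged to produce this $k$-point.

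The first step is the twisted Desale--Ramanan isomorphism of the appendix: $M^\alpha$ is isomorphic to the twisted Grassmannian $G^\alpha$ of $(g-1)$-dimensional linear subspaces in the base locus $X$ of the pencil of quadrics in $\mathbb{P}^{2g+1}$ attached to $C$. For $g = 2$, $X$ is a smooth intersection of two quadrics in $\mathbb{P}^5$ and $G^\alpha$ is the $\alpha$-twisted variety of lines on $X$, so producing a $k$-point of $M^\alpha$ reduces to producing a $k$-rational line on $X$. The second step is to verify that $X$ contains a line over every completion $k_\nu$: because $\alpha \in \Sha \Br(k(C))$, the class $\alpha_{k_\nu}$ is trivial, $M^\alpha_{k_\nu}$ coincides with the untwisted moduli, and this is nonempty over $k_\nu$ via the rational-point hypothesis on $C$ together with standard constructions of stable bundles of odd determinant. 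Applying the twisted isomorphism yields a $k_\nu$-line on $X$ at every $\nu$, after which the weak Hasse principle announced in the abstract upgrades this to a global $k$-line on $X$; running the twisted isomorphism in reverse then returns the $k$-rational point of $M^\alpha$ required above.

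The hardest ingredient is the weak Hasse principle for lines on $X \subset \mathbb{P}^5$. The Fano variety of lines on $X$ is a torsor under a group scheme closely tied to $J(C)[2]$, and local-to-global for such torsors is in general obstructed by $\Sha$ of the Jacobian, which is exactly the sort of obstruction the conjecture itself is trying to control. The rational point of $C$ and the specific geometry of the pencil of quadrics must be exploited to rigidify the situation and kill this obstruction, which is what the appendix is engineered to do. Extending the strategy beyond $g = 2$ or beyond $p = 2$ immediately hits the same wall: the base locus becomes an intersection of $\geq 3$ quadrics in higher-dimensional projective space, and no comparable Hasse principle for $(g-1)$-planes is currently available, so those cases of the conjecture are out of reach by this route.
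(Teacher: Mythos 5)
Your endgame coincides with the paper's: a $k$-point of the twisted moduli space lifts (after a nontrivial argument showing the $\Gm$-gerbe from the stack to the coarse space is trivial) to an $\alpha$-twisted rank-$2$ locally free sheaf, whose endomorphism algebra is a degree-$2$ Azumaya algebra representing $\tilde{\alpha}$, so the index divides $2$. But the engine you propose for producing that $k$-point has a genuine gap, and in fact inverts the paper's logic. A preliminary correction: in the convention of the appendix, $I_{g-1}$ parametrizes $(g-1)$-dimensional \emph{vector} subspaces of the $(2g+2)$-dimensional space, so for $g=2$ the twisted Desale--Ramanan isomorphism identifies $M_C^\alpha(2,\eta)$ with a twist of the base locus $X$ itself, not with the variety of lines on $X$ (the latter is a torsor under $J(C)$ and a different object). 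More seriously, the local-global step you lean on is unavailable in the direction you need it. If you work with points of $X$, you need the Hasse principle for smooth intersections of two quadrics in $\mathbb{P}^5$, which the introduction explicitly records as an open problem (known only conditionally, by Wittenberg). If you work with lines, the paper's weak Hasse principle asserts only that local lines force a global \emph{point}, and its proof \emph{uses} the period-index theorem rather than feeding into it; quoting it here is circular. You correctly observe that a local-global principle for isotropic subspaces is obstructed by $\Sha$ of the Jacobian, i.e., by essentially the obstruction the conjecture is about, but you offer no mechanism to kill it beyond asserting that the appendix is engineered to do so; the appendix is purely geometric (it constructs the Desale--Ramanan isomorphism over $k$) and contains no arithmetic input.

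The missing ingredient is the paper's actual arithmetic engine, which does not pass through $X$ at all. One uses the twisted Hecke correspondence to realize an open part of $M_C^\alpha(2,\mathscr{O}_C(P))$ as the total space of a smooth conic fibration $V\to U$, where $U$ is the complement of a geometrically integral quartic (Kummer) surface in the semistable moduli space $M_C^{ss,\alpha}(2,\mathscr{O}_C)$; the latter is a Severi--Brauer variety of dimension $3$ that has local points everywhere (by local untwisting) and is therefore isomorphic to $\mathbb{P}^3$ by Hasse--Brauer--Noether--Albert. Local points of $V$ again come from untwisting over each $k_\nu$ together with the fact that the Hecke fibres over stable points are conics isomorphic to $\mathbb{P}^1$. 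The global point is then produced by restricting $V\to U$ to a well-chosen line in $\mathbb{P}^3$ meeting the quartic in an integral degree-$4$ point (Hilbert irreducibility plus Ekedahl's effective version, to keep local solvability) and invoking Colliot-Th\'el\`ene's Hasse principle for conic bundles over $\mathbb{P}^1$ with at most four geometric degenerate fibres, combined with Suresh's computation of the unramified Brauer group. This is where the local-global difficulty is actually resolved, and it explains the restriction to genus $2$ and $2$-torsion: only there is the semistable moduli space $\mathbb{P}^3$ with quartic boundary. The Hasse principle for intersections of two quadrics in $\mathbb{P}^5$ is then derived as a \emph{consequence} of the period-index theorem via the pencil-of-quadrics picture, not used as an input to it.
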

  
If $C$ is an elliptic curve,   $\Sha \Br(k(C))$ is isomorphic to the Tate-Shafarevich group of $C$  and the conjecture is true (\cite{Ca}, \cite{ON}).  
 We  prove this conjecture for  the $2$-torsion subgroup for  genus 2  curves.
 
  \begin{theorem}(cf. \ref{per-ind})
 Let $k$ be a number field and $C$ a smooth projective geometrically integral curve over $k$  of genus 2 with a $k$-rational point. 
 Then for every element $\tilde{\alpha}$ in $ _2\Sha \Br(C)$, index of 
$\tilde{\alpha}$ divides 2.
 \end{theorem}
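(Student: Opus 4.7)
The strategy is to represent every $\tilde\alpha \in {}_2\Sha\Br(C)$ by the endomorphism algebra $\EEnd(\EE)$ of a rank $2$ $\tilde\alpha$-twisted vector bundle $\EE$ on $C$ defined over $k$; this is a degree $2$ Azumaya algebra on $C$, so its existence immediately forces the index of $\tilde\alpha$ to divide $2$. The task is therefore to produce such an $\EE$.

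Using the $k$-rational point of $C$, the twisted version of the Desale-Ramanan isomorphism established in the appendix identifies the moduli space $M^{\tilde\alpha}$ of rank $2$ $\tilde\alpha$-twisted sheaves on $C$ with fixed (suitably normalized) odd determinant as a smooth intersection $X_{\tilde\alpha}$ of two quadrics in a Brauer-Severi variety $P$ of dimension $5$ over $k$. The Brauer class of $P$ in $\Br(k)$ is determined by $\tilde\alpha$; since $\tilde\alpha \in {}_2\Sha\Br(C)$ is locally trivial at every completion, this class is locally trivial in $\Br(k_\nu)$ for every place $\nu$, and hence is globally trivial by Albert-Brauer-Hasse-Noether. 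Consequently $P \cong \mathbb P^5_k$ and $X_{\tilde\alpha}$ is an honest intersection of two quadrics in $\mathbb P^5$ over $k$, so the weak Hasse principle of the appendix applies.

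To invoke that principle one must exhibit a $k_\nu$-line in $X_{\tilde\alpha}$ at every place $\nu$. Since $\tilde\alpha \otimes k_\nu = 0$, the base change $X_{\tilde\alpha} \otimes k_\nu$ coincides with the untwisted moduli space $M \otimes k_\nu$ embedded in $\mathbb P^5_{k_\nu}$ via Desale-Ramanan. Using $C(k_\nu) \neq \emptyset$ one constructs explicit $\mathbb P^1$-families of rank $2$ stable bundles on $C \otimes k_\nu$ (for example by varying non-split extensions of suitably chosen line bundles in a pencil) whose image under the embedding is a $k_\nu$-line. Applying the weak Hasse principle produces a $k$-rational point of $X_{\tilde\alpha}$, which via the twisted Desale-Ramanan isomorphism corresponds to the desired rank $2$ $\tilde\alpha$-twisted bundle $\EE$ on $C$, completing the proof.

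The principal obstacle is the geometric verification that local triviality of $\tilde\alpha$ produces local \emph{lines} — and not merely isolated local points — on the quadric intersection: this is precisely the input needed to feed into the line-based Hasse principle of the appendix, and requires one to control how $\mathbb P^1$-families of rank $2$ bundles on $C$ embed under the Desale-Ramanan map. The identification of the ambient $P$ with $\mathbb P^5_k$ via global triviality of a locally trivial Brauer class, by contrast, is comparatively formal, and the rest of the argument is routine once the twisted Desale-Ramanan correspondence is in hand.
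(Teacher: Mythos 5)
Your overall reduction --- produce a rank $2$ $\alpha$-twisted stable sheaf $\EE$ over $k$ and take $\EEnd(\EE)$, a degree $2$ Azumaya algebra representing $\tilde\alpha$ --- is exactly the paper's, and your use of Albert--Brauer--Hasse--Noether to untwist a locally trivial ambient Severi--Brauer variety also appears there (in \ref{mainthm} and \ref{mc-ig1}). But the engine you propose to drive the argument is circular relative to this paper. The ``weak Hasse principle'' you invoke --- a smooth intersection of two quadrics in $\P^5$ containing a line over every $k_\nu$ has a $k$-point --- is not in the appendix (the appendix only establishes the rational Desale--Ramanan isomorphism $M_C(2,\eta)\simeq I_{g-1}(Q_\eta)$ and its $J_2$-equivariance), and it is not a previously known result: the Hasse principle for such intersections is open, as the introduction notes. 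In this paper that statement is Theorem \ref{lgp}, and it is \emph{deduced from} the period--index theorem, more precisely from Theorem \ref{essentially-trivial}, by running the twisted Desale--Ramanan identification in the opposite direction (given $X$ with local lines, realize $X$ as a twisted moduli space $M_C^\alpha(2,\eta)$, use the Shankar--Wang solubility result to see that $\alpha$ is locally essentially trivial, and conclude from \ref{essentially-trivial} that $X(k)\neq\emptyset$). So you cannot feed local lines into it to prove the period--index statement without an independent proof of that local--global principle.

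What the paper actually uses in place of your missing input is the twisted Hecke correspondence of \S\ref{twisted-hecke}: the odd-determinant twisted moduli space $M_C^\alpha(2,\OO_C(P))$ is the base of a $\P^1$-bundle $\PP^\alpha$ which also maps, as a conic fibration, to $M_C^{ss,\alpha}(2,\OO_C)$. For genus $2$ the latter is a Severi--Brauer threefold, trivialized to $\P^3$ by local triviality plus Albert--Brauer--Hasse--Noether (\ref{mainthm}); the fibration is a smooth conic bundle over the complement of a geometrically integral quartic (Kummer) surface; it has local points because $\alpha$ untwists over each $k_\nu$ and stable bundles exist there with $\P^1$ fibres (\ref{p1fibres}); and then a genuinely available local--global principle --- Colliot-Th\'el\`ene's Hasse principle \ref{colliot-hasse} for conic fibrations over the complement of a quartic in $\P^n$, resting on Suresh's unramified Brauer group computation \ref{suresh}, Hilbert irreducibility, and the four-degenerate-fibre conic bundle theorem \ref{colliot} --- produces a $k$-point. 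A further point you elide: a $k$-point of the coarse moduli space does not by itself yield a twisted sheaf over $k$; one needs the $\Gm$-gerbe $\sheaf{M}_C^\alpha(2,L) \to M_C^\alpha(2,L)$ to be trivial, which is Proposition \ref{lieblich} (again via $\Br(k)\isom\Br(M)$ and local triviality). If you want to salvage your route you must supply an independent proof of the line-to-point Hasse principle for $X\subset\P^5$; within this paper the logical arrow points the other way.
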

 
 As a consequence of our approach to the period-index question for hyperelliptic curves, we also obtain  following result of independent interest.

 \begin{theorem}(cf. \ref{lgp})
Let $k$ be a global field of characteristic not $2$. Let $X$ be a smooth intersection of two quadrics 
$Q_1$ and $Q_2$ in $\mathbb{P}^5$.  Suppose $disc(Q_1) = 1$. If $X(k_\nu)$ contains a line for all places $\nu$ of $k$, 
then, $X(k)$ is nonempty.
\end{theorem}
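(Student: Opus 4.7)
The plan is to reduce the theorem to the period-index result (Theorem~\ref{per-ind}) applied to the hyperelliptic genus $2$ curve naturally attached to the pencil. Set $f(s,t) := \mathrm{disc}(sQ_1 + tQ_2) \in k[s,t]$, a binary sextic; since $X$ is smooth, $f$ has six distinct roots, and the smooth projective model of $y^2 = f(s,t)$ is a hyperelliptic curve $C$ of genus $2$. The assumption $\mathrm{disc}(Q_1) = 1$ yields $f(1,0) = 1$, a nonzero square, so $C$ acquires a $k$-rational point $p$.

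By the twisted Desale-Ramanan isomorphism of the appendix, the Fano variety $F(X)$ of lines on $X$ is isomorphic over $k$ to $M_C^{\tilde\alpha}$, the twisted form of the moduli space of stable rank $2$ bundles on $C$ with odd determinant (normalized using $p$), for a canonical class $\tilde\alpha \in {}_2\Br(C)$ attached to the pencil. Under this identification, a $k$-line on $X$ is the same datum as a stable $\tilde\alpha$-twisted rank $2$ bundle on $C$ with the prescribed determinant. The hypothesis that $X(k_\nu)$ contains a line for every place $\nu$ gives $F(X)(k_\nu) \neq \emptyset$ for all $\nu$; combined with the explicit pencil-theoretic construction of $\tilde\alpha$, this shows that $\tilde\alpha$ belongs to $\Br(\mathcal{C}) \subseteq {}_2\Sha\Br(C)$, where $\mathcal{C}/\mathcal{O}$ is a regular proper model of $C$.

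Now apply Theorem~\ref{per-ind}: the index of $\tilde\alpha$ divides $2$, so $\tilde\alpha$ is represented by a quaternion Azumaya algebra on $C$, giving an $\tilde\alpha$-twisted rank $2$ bundle $\EE$. Tensoring $\EE$ with a suitable line bundle from $\Pic(C)(k)$ (containing $\mathcal{O}(p)$) adjusts the determinant into the component prescribed by the moduli problem, and stability can be arranged within the moduli class. The resulting stable twisted bundle yields a $k$-point of $M_C^{\tilde\alpha} \cong F(X)$, that is, a $k$-rational line $\ell \subset X$.

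To conclude, linear projection from $\ell$ gives a $k$-birational map $X \dashrightarrow \P^3$: a general plane through $\ell$ meets $X$ in $\ell$ plus a single residual point. Hence $X$ is $k$-rational, and in particular $X(k) \neq \emptyset$. The main technical obstacle is the second paragraph: making the twisted correspondence of the appendix precise, identifying $\tilde\alpha$ explicitly as an unramified Brauer class on $C$, and verifying that $\tilde\alpha \in {}_2\Sha\Br(C)$, so that Theorem~\ref{per-ind} applies. The remaining steps amount to careful bookkeeping with determinant classes on $C$ and the classical projection construction.
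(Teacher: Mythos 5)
Your overall strategy of attaching the genus $2$ curve $C\colon y^2=\mathrm{disc}(tQ_1-Q_2)$ and transporting the local hypothesis into a statement about a twisted moduli space is the right one, but the proposal has two serious problems. First, the identification is wrong: for $g=2$ the (twisted) Desale--Ramanan/Ramanan isomorphism identifies $M_C^{\alpha}(2,\eta)$ with $I_{g-1}(Q_\eta^{\alpha})=I_1(Q_\eta^{\alpha})$, which is the base locus $X$ itself (both are threefolds), not the Fano variety of lines $F(X)$; the variety of lines is $I_2=I_g$, a torsor under the Jacobian. So a $k$-point of the twisted moduli space is directly a $k$-point of $X$ --- the final projection-from-a-line step is unnecessary --- but, more importantly, the object you must produce is a $k$-point of the \emph{odd-determinant} twisted moduli space $M_C^{\alpha}(2,\eta)$ with $\deg\eta=3$.

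Second, and this is the essential gap, you cannot obtain such a point from Theorem \ref{per-ind}. Lieblich's criterion (quoted in \S\ref{twisted-moduli}) says that $\mathrm{index}(\tilde\alpha)\mid 2$ is equivalent to the existence of a stable $\alpha$-twisted rank $2$ sheaf with \emph{trivial} determinant. Tensoring a twisted sheaf $\EE$ by an honest line bundle $N$ changes $\det\EE$ by $N^{2}$, hence changes its degree by an even integer, so no amount of ``bookkeeping with determinant classes'' moves you from degree $0$ to the odd degree $3$. Passing from the even-determinant to the odd-determinant twisted moduli space is precisely the hard step; in the paper it is carried out by the twisted Hecke correspondence (\S\ref{twisted-hecke}) together with Colliot-Th\'el\`ene's Hasse principle for conic fibrations (\ref{colliot-hasse}), yielding Theorem \ref{essentially-trivial}, of which \ref{per-ind} is a \emph{corollary}. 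Your proposal inverts this logical order and thereby omits the main content of the argument. A smaller point: the assertion that local solubility forces $\tilde\alpha\in\Sha$ also needs justification; the paper uses the Shankar--Wang result, via \ref{complex-lemma} and \ref{complex}, to show that a line in $X(k_\nu)$ forces the twisting cocycle to lie locally in the image of $\Pic C/2$, i.e.\ that $\alpha$ is locally essentially trivial, which is the hypothesis actually required by \ref{essentially-trivial} and \ref{mc-ig1}.
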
 
  
 The Hasse principle for smooth intersection of quadrics in $\P_k^5$ 
 is an open question.  
   Wittenberg (\cite{OW}) proves that assuming  the finiteness of Tate-Shafarevich groups
and Schinzel's hypothesis,  the Hasse  principle holds for smooth intersections of two quadrics in $P^5_{k}$.

 We follow the approach of Lieblich   to study twisted moduli spaces associated to a Brauer 
  class in order  to bound its index. Using this technique, Lieblich proves period index bounds for 
  unramified  Brauer classes of function fields of surfaces over finite fields ((\cite{Lb})). Here is a sketch of the contents of the paper.
  
 In \S \ref{twisted-moduli}, we recall some properties of twisted sheaves and moduli spaces/stacks. 
 In \S\ref{hecke}, we recall the Hecke correspondence, introduced by Narasimhan and Ramanan (\cite{NR}). 
 We describe the correspondence between the moduli spaces of vector bundles of  rank two with odd and
  even degree determinants for curves of genus 2. In \S\ref{twisted-hecke} 
 we define a twist of this correspondence with respect to a Galois cocycle with values in
 $\Pictwo C$.   In \S \ref{conic-fibrations}, we record  a Hasse principle result, due to Colliot-Th\'el\`ene,  for  certain conic fibrations over ${\mathbb P}^3$, which is used in 
 \S \ref{index2-genus2}. 
  In \S \ref{index2-genus2},   if $g = 2$, using  the  twisted  Hecke correspondence, we obtain  period=index statements for 
 $_2\Sha \Br(k(C))$ (cf. \ref{per-ind}) stated at the beginning of the introduction. In particular, if $k$ is a totally 
 imaginary number field, then period and index coincide for elements in $\Br(\CC)$.

  In \S\ref{pencils-hypell}, over a general field of characteristic not $2$,  we recall several results  on a nonsingular pencil  
  $Q=tQ_1-Q_2$ of quadrics on a vector space of dimension $2g+2$ and the asssociated hyperelliptic curve $C$
  defined by $y^2=f(t)$ where $f(t)=disc(tQ_1-Q_2)$. We describe the group of automorphisms $\AAut^+_Q$  of the pencil and  an isomorphism
 $ \theta_Q:  \Pictwo C \simeq \AAut^+_Q$. In \S\ref{twisting-pencils}, given a Galois cocycle $\gamma$ with values
  in $\AAut^+_Q$, we describe the twisted pencil $Q^{\gamma}$ which is defined over a central simple algebra of degree
  $2g+2$. If the algebra is split, we recover the usual pencil of quadrics.
 Let $C$ be  a  smooth hyperelliptic curve  which admits a $k$- rational point $P$ and $L= {\OO}((2g+1)P)$. 
  Let  $M_C(2,L)$ denote the moduli space of rank 2 stable locally free 
  sheaves on $C$ with determinant $L$.   There is a description  due to Desale-Ramanan (\cite{De-Ra}) of $M_C(2, L)$ over $\overline{k}$  in terms of the space 
  $I_{g-1}(Q_0)$, the $(g-1)$-dimensional Grassmannian 
 of linear subspaces contained in the base locus of a pencil of quadrics $ Q_0  =t Q_1 -Q_2$ in 
  $\P^{2g+1}$; the hyperelliptic curve $C$ is defined by 
  $y^2 = (-1)^{g+1}det(Q_0)$. 
  In \S\ref{twisting-moduli-hyperell}, we recall a construction of Ramanan  (cf. \ref{Appendix}) 
  of a   nonsingular pencil $Q_{\eta}$  whose associated hyperelliptic curve is $C$, as well as the isomorphism over $k$:
   $$\psi : M_C(2, L) \to I_{g-1}(Q_{\eta}).$$
 We describe a twist of this isomorphism by a Galois cocycle $\beta$ with values in $\Pictwo C$:
 $$\psi^\beta : M^\beta_C(2, L) \to I_{g-1}(Q^\beta_{\eta})$$ 
Here  $Q_{\eta}^\beta$ is a pencil of twisted quadrics.

 Let  $k$ be a  number field  and $C/k$  a smooth projective 
  hyperelliptic curve of genus $ g$ with $C(k) \neq \emptyset$. 
   Let $\tilde{\alpha} \in ~
   _2\Sha \Br(k(C))$. Then $\tilde{\alpha}$ is in the image of $\Br(C) \to \Br(k(C))$ (\cite{P-Sj}). We choose 
   a lift $\alpha \in H^2(C, \mu_2) $ of $\tilde{\alpha}$ with $\alpha_{\overline{k}} = 0$, where 
   $ \overline{k}$ is the algebraic  closure of $k$.  This is 
   possible since $C(k) \neq \emptyset$. Let $\beta$ be a one cocycle with values in $ \Pictwo C$ associated to 
   $\alpha$ (cf. \S\ref{twisted-moduli},  $(\star \star) $).  In \S\ref{soluble}, we prove (cf. \ref{mc-ig1}) that the 
   twisted pencil $Q_\eta^\beta$ is indeed a pencil of quadrics. This uses a result of Shankar-Wang (\cite{SW}).
Thus we have an identification of the twisted moduli space $M_C^\beta(2, L)$ with the space
  $I_{g-1}(Q_\eta^\beta)$  with $Q_\eta^\beta$ a genuine  pencil of quadrics.   

  In \S\ref{pencil-p5}, we use the 
 identification of the twisted moduli space  $M^\beta_C(2, L)$  with the base locus $X$ of the   pencil  
 $(Q^\beta_{\eta})$ in the genus 2 case and the results of  \S\ref{index2-genus2}  to  derive a variant of a Hasse principle for a smooth intersection of two quadrics in 
${ \mathbb P}^5$  stated in the beginning.

 \vskip 3mm

\noindent
{\bf Acknowledgments:}\\
The first  author was partially supported by the IMSc-DAE project Complex Analysis and Algebraic Geometry.\\
The second author was partially supported by the NSF grant DMS-1801951.\\
The authors thank J.-L. Colliot-Th\'el\`ene,  Max Lieblich, V. Suresh and  J.-P. Tignol for their generous contributions to this paper.

\section{Twisted sheaves and moduli spaces/stacks}
\label{twisted-moduli}

We recall some properties of moduli stacks of twisted sheaves. 
We refer to  the papers \cite{Lb},\cite{Lb2}, \cite{Lb3} of Lieblich 
 for details.

 \subsection{$\mu_n$-gerbes}
 \label{gerbes}
Let $n$ be coprime to char$(k)$ and $C/k$ a smooth projective geometrically integral curve.

\begin{defin}
A $\mu_n$-gerbe  on $C$ is a gerbe $\sheaf{C}\to C$  together with an  isomorphism $(\mu_n)_* \simeq \sheaf{I}(\sheaf{C})$, where
$\sheaf{I}(\sheaf{C})$ denotes the inertial stack of $\sheaf{C}$. 
\end{defin}

We record the following results on $\mu_n$-gerbs on $C$, due to Giraud (\cite{Giraud}, Section IV.3).

\begin{prop}\label{giraud}
There is a natural bijection between the set of isomorphism classes of $\mu_n$-gerbes  on $C$ 
and $H^2(C,\mu_n)$.
\end{prop}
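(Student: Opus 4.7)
The plan is to establish the bijection directly, following the classical \v{C}ech-cohomological argument of Giraud, which uses the abelian band $\mu_n$ in an essential way.

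\emph{From gerbes to $H^2$.} Given a $\mu_n$-gerbe $\sheaf{C} \to C$, choose an \'etale cover $\{U_i \to C\}$ fine enough that $\sheaf{C}(U_i)$ is nonempty for each $i$, and pick objects $x_i \in \sheaf{C}(U_i)$. Refining further if necessary, we may select isomorphisms $\phi_{ij}: x_j|_{U_{ij}} \to x_i|_{U_{ij}}$ on each double overlap $U_{ij} = U_i \times_C U_j$. On triple overlaps, the composition $\phi_{ij} \circ \phi_{jk} \circ \phi_{ki}$ is an automorphism of $x_i|_{U_{ijk}}$; via the banding isomorphism $\mu_n \simeq \sheaf{I}(\sheaf{C})$ this produces a section $c_{ijk} \in \mu_n(U_{ijk})$. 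A direct check on quadruple overlaps, using associativity of composition in $\sheaf{C}$, shows that $(c_{ijk})$ is a \v{C}ech 2-cocycle. Modifying the choices $(x_i, \phi_{ij})$ or passing to a refinement of the cover alters $(c_{ijk})$ only by a coboundary, so the class $[c] \in H^2(C,\mu_n)$ depends solely on the isomorphism class of $\sheaf{C}$.

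\emph{From $H^2$ to gerbes.} Conversely, given $\alpha \in H^2(C, \mu_n)$ represented by a 2-cocycle $(c_{ijk})$ on an \'etale cover $\{U_i\}$, one constructs a gerbe $\sheaf{C}_\alpha$ by taking on each $U_i$ the trivial $\mu_n$-gerbe $B\mu_n \times U_i$, gluing their pullbacks to $U_{ij}$ via the identity, and twisting the associativity 2-isomorphism over $U_{ijk}$ by multiplication by $c_{ijk}$. Equivalently, $\sheaf{C}_\alpha$ can be realized as the stack of $c$-twisted invertible sheaves in the sense of Lieblich, giving a concrete geometric model whose inertia is tautologically identified with $\mu_n$.

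\emph{Mutual inverses.} Applying the first construction to $\sheaf{C}_\alpha$, using the tautological sections of the trivial gerbes $B\mu_n \times U_i$, recovers a cocycle cohomologous to $(c_{ijk})$. Conversely, starting from a $\mu_n$-gerbe $\sheaf{C}$, the gluing data $(x_i, \phi_{ij}, c_{ijk})$ exhibit an isomorphism $\sheaf{C} \simeq \sheaf{C}_{[c]}$. The main technical subtlety is verifying independence of $[c]$ from the choice of objects $x_i$ and isomorphisms $\phi_{ij}$; this is precisely where the hypothesis $\mu_n \simeq \sheaf{I}(\sheaf{C})$ intervenes, since otherwise the automorphism $\phi_{ij}\phi_{jk}\phi_{ki}$ would be valued in a nontrivial band rather than a fixed abelian sheaf on $C$, and no cohomology class in $H^2(C,\mu_n)$ could be extracted. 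The bookkeeping of refinements and coboundaries is carried out systematically in \cite{Giraud}, Ch. IV.
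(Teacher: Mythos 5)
The paper does not actually prove this proposition: it records it as a known result and refers to \cite{Giraud}, Section IV.3, so there is no argument in the text to compare yours against. Your \v{C}ech-theoretic sketch is the standard proof of Giraud's classification in the abelian-band case and is essentially correct: local objects exist because $\sheaf{C}\to C$ is a gerbe, local isomorphisms $\phi_{ij}$ exist after refinement because any two objects are locally isomorphic, and the failure of the cocycle condition lands in $\mu_n$ precisely because of the banding $\mu_n \simeq \sheaf{I}(\sheaf{C})$ (and because $\mu_n$ is central/abelian, so the class is independent of the choice of $x_i$). Two small points are worth making explicit. First, your argument identifies gerbes with \emph{\v{C}ech} $H^2$; to get the group $H^2(C,\mu_n)$ as used in the paper (derived-functor \'etale cohomology) you need the comparison of \v{C}ech and sheaf cohomology, which holds here by Artin's theorem since $C$ is quasi-projective over a field. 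Second, your ``equivalently'' aside is slightly off: the stack of $c$-twisted \emph{invertible} sheaves has inertia $\Gm$, hence is a $\Gm$-gerbe representing only the image of $\alpha$ in $H^2(C,\Gm)$; to obtain a $\mu_n$-gerbe with class $\alpha$ itself one must rigidify, e.g.\ take twisted invertible sheaves $\LL$ equipped with a trivialization $\LL^{\otimes n}\simeq \OO$ compatible with $c^{\,n}=\partial(\cdot)$. Neither point affects your main construction, which stands on its own.
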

 
 We simply call $\alpha \in H^2(C, \mu_n)$ a $\mu_n$-gerbe under the above correspondence. 
 
 \begin{defin}
 A $\mu_n$-gerbe $\alpha$ is {\it essentially trivial} if its image in $H^2(C, \gG_m)$ is zero. 
 \end{defin}

\begin{defin}
 Let $L$ be an invertible sheaf on $C$. The gerbe $[L]^{\frac{1}{n}}$ is the stack over $C$ whose objects over a $C$- scheme $T$ are pairs 
 $(\LL, \phi)$ where $\LL$ is an invertible sheaf on $T$ and $\phi : \LL^n \to L_T$ is an isomorphism. The gerbe 
 $[L]^{\frac{1}{n}}$ is a $\mu_n$-gerbe. 
\end{defin}

We recall the following facts on essentially trivial $\mu_n$-gerbs from (\cite[\S 2.3.4]{Lb2}). 
The Kummer sequence 
$$ 1 \to \mu_n \to \gG_m \buildrel{n}\over{\to } \gG_m \to 1 $$
yields an exact sequence
$$\Pic C  \buildrel{n}\over{\to } \Pic C \to H^2(C, \mu_n) \to H^2(C, \gG_m).$$
Every essentially trivial $\mu_n$-gerbe $\alpha$ is isomorphic to     $[L]^{\frac{1}{n}}$
where  $L$ is an  invertible line bundle  on $C$ with $\partial(L) = \alpha$. 
There is an invertible $\alpha$-twisted sheaf $\LL_\alpha$ such that  $\LL_\alpha^n \simeq L$,
uniquely determined up to tensoring with a line bundle in $\twoPic C$. 
Given an $\alpha$-twisted sheaf $\FF$, $\FF \otimes \LL_\alpha^{-1}$ is isomorphic to the pull-back to the gerbe $\alpha$
of a sheaf $\FF$ on $C$. This association can be used to define stability of $\alpha$-twisted sheaves.

\subsection{ Twisted sheaves and Stability }

We recall the definition of stability and semistability for locally free sheaves on $C$.
For a vector bundle $E$ on $C$, the slope $\mu(E)$ is the rational number  
$\frac{deg(E)}{rank(E)}$..
The vector bundle $E$ is semistable if $\mu(F) \leq \mu(E)$ for every locally free subbundle $F$ 
of $E$ and stable if strict inequality holds. We shall now   define the corresponding notions for twisted sheaves.

Let $\alpha \in H^2(C, \mu_n)$ and $\FF$ a locally free $\alpha$-twisted sheaf of rank $n$.
Then   determinant  of $ \FF$, denoted by det$(\FF)$,  has no twist and is a pullback of a line bundle $L$ on $C$. 
We write det$(\FF) = L$. 

For an invertible  $\alpha$-twisted sheaf $M$, $M^{\otimes n} = L$ where $L$ is a line bundle on $C$. 
We define deg$(M) = \frac{1}{n} deg(L)$. We define 
the degree of a locally free $\alpha$-twisted sheaf $\FF$  to be the degree of det$(\FF)$.

\begin{defin}\label{mustable}
The  slope $\mu(\FF)$  of a locally free  $\alpha$-twisted 
sheaf $\FF$  is  $\frac{deg(\FF)}{rank(\FF)}$.
\end{defin}

An $\alpha$-twisted locally free sheaf $\FF$ is {\it semistable} (resp. {\it stable}) if for every 
$\alpha$-twisted subbundle $\sheaf{G} $ of $\FF$, 
$$
\mu(\sheaf{G})\leq \mu(\sheaf{F})\,\,\, ( {resp. }\mu(\sheaf{G})< \mu(\sheaf{F})). 
$$

Let $\alpha \in H^2(C, \mu_2)$. Since $H^2(C_{k^s}, \gG_m) = 0$, $\alpha_{k^s}$ is essentially trivial and there is a
line bundle $L$ on $C_{k^s}$ such that the image of 
$[L] \in (\Pic C_{k^s}/2)  \to H^2(C_{k^s}, \mu_2)$ is $\alpha$.  Thus there is an $\alpha$-twisted invertible sheaf $\LL_\alpha$
over $k^s$. 
The stability /semistability of an $\alpha$-twisted locally free sheaf $\FF$ is equivalent to the stability/semistability 
of the locally free sheaf  $\FF \otimes \LL_\alpha^{-1}$ over $C_{k^s}$.

\subsection{Moduli stacks}
\label{moduli-stacks}

Let $\alpha \in H^2(C, \mu_2)$ and $L$ a line bundle on $C$. 
The moduli stack $\sheaf{M}_C^\alpha(n,  L)$ of rank $n$  $\alpha$-twisted stable locally free sheaves 
  determinant $L$ on $C$  is defined in \cite{Lb}.
It was shown to be an Artin stack  locally of finite presentation over $k$ (see \cite{Lb}).
In fact,  the stack $\sheaf{M}_C^\alpha(n, L)$ is a $\mu_n$-gerbe over the  smooth quasi-projective variety 
$M_C^\alpha(n, L)$ 
which is the   coarse moduli space of the stack  $\sheaf{M}_C^\alpha(n,  L)$. 
This variety  admits a locally factorial compactification $M^{\alpha, ss}_C(n, L)$  with a closed  
complement of  codimension at least one. 
In fact $M^{\alpha, ss}_C(n, L)$ is the coarse moduli space of stack of rank $n$ locally
 free semistable $\alpha$-twisted sheaves of determinant 
$L$. 

We record the following result from (\cite[3.1.2.1]{Lb}). 

\begin{theorem}
Let $\alpha \in H^2(C,\mu_n)$ with image $\tilde{\alpha} \in H^2(C,\gG_m)$. 
Then index$(\tilde{\alpha})$ divides $n$ if and only if there is a stable locally free $\alpha$-twisted 
sheaf of rank $n$ and trivial determinant. 
\end{theorem}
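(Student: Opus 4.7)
The plan is to relate rank-$n$ locally free $\alpha$-twisted sheaves with trivial determinant to Azumaya algebras of degree $n$ on $C$ representing $\tilde{\alpha}$, and then to promote existence to stability.

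One direction is essentially formal. If $\FF$ is a locally free $\alpha$-twisted sheaf of rank $n$, then $\EEnd(\FF) = \FF \otimes \FF\dual$ is naturally untwisted, since the $\mu_n$-inertia acts with opposite weights on $\FF$ and $\FF\dual$. Hence $\EEnd(\FF)$ is an ordinary Azumaya algebra on $C$ of degree $n$ whose Brauer class is the image $\tilde{\alpha} \in \Br(C)$ of $\alpha$. Consequently $\mathrm{index}(\tilde{\alpha})$ divides $n$; neither triviality of the determinant nor stability is used for this direction.

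For the converse, assume $\mathrm{index}(\tilde{\alpha})$ divides $n$ and choose an Azumaya algebra $\AA$ on $C$ of degree $n$ with Brauer class $\tilde{\alpha}$. Its sheaf of local trivializations is a $\PGL_n$-torsor, and applying the coboundary of $1 \to \mu_n \to \SL_n \to \PGL_n \to 1$ produces a $\mu_n$-gerbe $\alpha' \in H^2(C,\mu_n)$ whose image in $\Br(C)$ equals $\tilde{\alpha}$ and which carries a tautological rank-$n$ twisted sheaf with trivial determinant (whose endomorphism algebra recovers $\AA$). By the Kummer sequence the difference $\alpha - \alpha'$ lies in the kernel $\Pic(C)/n$ of $H^2(C,\mu_n) \to H^2(C,\gG_m)$; writing $\alpha - \alpha' = \partial(L)$ for a line bundle $L$, and tensoring the tautological sheaf with an invertible $\partial(L)$-twisted sheaf $\LL$ satisfying $\LL^n \simeq L$, converts it into an $\alpha$-twisted rank-$n$ sheaf with determinant $L$. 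The residual freedoms of tensoring with a line bundle $N$ on $C$ (scaling the determinant by $N^n$ without changing the twist) and of replacing $\AA$ by a Brauer-equivalent representative allow us to absorb $L$ and arrive at trivial determinant.

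To promote locally free to stable, I would invoke the standard fact that the stable locus in the moduli stack $\sheaf{M}_C^\alpha(n,\OO_C)$ is open and, whenever non-empty, dense in the semistable locus, so that a generic deformation of the sheaf produced above is stable; for $C$ a smooth projective curve the obstruction theory is benign. The main obstacle I anticipate is the determinant bookkeeping in the converse direction: one must simultaneously reconcile the two $\mu_n$-gerbes $\alpha$ and $\alpha'$ and trivialize the determinant, using the coupled flexibility of Brauer-equivalent Azumaya representatives and invertible twisted line bundles. Once the Azumaya/twisted-sheaf dictionary is in place, the rest of the argument is conceptually transparent.
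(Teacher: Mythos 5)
The paper offers no proof of this statement --- it is recorded verbatim from \cite[3.1.2.1]{Lb} --- so your proposal has to stand on its own. The first direction is fine. The converse, however, has a genuine gap exactly at the point you defer as ``determinant bookkeeping'': the two residual freedoms you invoke provably cannot repair the determinant. Tensoring with a line bundle $N$ changes the determinant by $N^{n}$ and so does not move its class in $\Pic(C)/n$; and that class is forced on you, because the Kummer map $\partial\colon \Pic(C)/n \to H^{2}(C,\mu_n)$ is injective, so the determinant of the $\alpha$-twisted sheaf you build is pinned to the class of the $L$ with $\partial(L)=\alpha-\alpha'$, which is trivial only when $\alpha=\alpha'$ to begin with. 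Likewise, the Brauer-equivalent representatives reachable by your own construction do not help: writing $\VV$ for the tautological $\alpha'$-twisted sheaf of $\AA$, one has $\EEnd(\VV\otimes\LL\otimes N)=\EEnd(\VV)=\AA$, so the associated coboundary gerbe is still $\alpha'$, not $\alpha$. What is actually needed is a mechanism that changes $\det$ modulo $n$-th powers at fixed rank and fixed gerbe --- e.g.\ elementary modifications of the twisted sheaf along twisted skyscraper quotients at closed points of $C$ --- together with a verification that such moves sweep out all of $\Pic(C)/n$; equivalently, one must show that the coboundary $H^{1}(C,\PGL_n)\to H^{2}(C,\mu_n)$ hits \emph{every} lift of $\tilde{\alpha}$, not just the one attached to your chosen $\AA$. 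That is the substantive content of Lieblich's proposition, and it is missing.

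The promotion from ``locally free'' to ``stable'' is a second gap. Openness and density of the stable locus inside the semistable locus is of no use until one knows the stable locus is non-empty, which is essentially what is being asserted; moreover the sheaf you construct need not even be semistable, so ``a generic deformation is stable'' presupposes that it sits in an irreducible family meeting the stable locus. What is required is the irreducibility and smoothness of the stack of locally free $\alpha$-twisted sheaves of rank $n$ and fixed determinant on a curve together with a dimension count bounding the unstable locus --- and this genuinely uses $g\ge 2$: on ${\mathbb P}^{1}$ there is no stable bundle of rank $n\ge 2$ with trivial determinant even for $\alpha=0$. Both gaps are fixable, but they constitute the proof rather than routine bookkeeping around it.
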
 

There is another way of describing the stack of $\alpha$-twisted sheaves and the associated 
moduli space $M_C^\alpha(n,L)$ using  Galois twists which we shall explain next. Let $f : C \to k$ be a 
smooth projective geometrically integral curve with $C(k) \neq \emptyset$. 
The Leray spectral sequence yields decomposition 

\hspace*{\fill}
$H^2(C, \gG_m) = H^2(k, \gG_m) \oplus H^1(k, \Pic C)$    \hfill  $(\star)   $
\hspace*{\fill}
 
\hspace*{\fill}
$  H^2(C,  \mu_n) = H^2(k, \mu_n ) \oplus H^1(k, \nPic C) \oplus R^2f_*\mu_n $  \hfill $(\star \star)  $
\hspace*{\fill}

\noindent
The sheaf $R^2f_*\mu_n$ is isomorphic to $\Z/n\Z$. The splitting of the 
natural map $H^2(C, \mu_n) \to \Z/n\Z$ is given by $[1]$ mapping to the gerbe 
$[\OO(P)]^{\frac{1}{n}}$. 

Let $\alpha \in H^2(C,\mu_n)$ with image $\tilde{\beta} \in H^1(k, \nPic C)$ under projection in $(\star \star)$. 
Let $\beta : \Gamma_k \to  \nPic C$  be a cocycle representing $\tilde{\beta}$. 
Let $\sheaf{M}_C(n, L)$ (resp. $\sheaf{M}_C^{ss}(n , L)$) denote 
the stack of rank $n$ stable (resp. semistable) locally free sheaves of $C$ with determinant $L$. 
Tensoring gives an action of $\nPic C$ on these moduli stacks and  we have the twists 
$\sheaf{M}_C^\beta(n, L)$ (resp. $\sheaf{M}_C^{ss, \beta}(n, L)$) of these moduli stacks by the cocycle $\beta$.
We also have the twisted $M_C^\beta(n, L)$  and $M_C^{ss, \beta}(n, L)$. 
In view of (\cite[3.2.2.6]{Lb}),  the moduli space $M_C^\alpha(n, L)$  of $\alpha$-twisted stable sheaves of rank $n$
and determinant $L$ is precisely  the twist $M_C^\beta(n, L)$ of the moduli space $M_C(n, L)$ by the cocycle $\beta$. 
We shall identify the space $M_C^\alpha(n, L) $  with   the 
corresponding twisted moduli spaces.

 \section{Hecke correspondence }
 \label{hecke}
 
 In this section, we recall the Hecke correspondence, introduced by
  Narasimhan and Ramanan (\cite{NR}]). This was further investigated by 
 Beauville-Laszlo-Sorger (\cite{Beauville}). We shall use the terminology and notations from  (\cite[p. 206]{Beauville}). 
 We recall the definition of the stability index $s(E)$ of a vector bundle $E$
 on $C$. 
 \begin{defin}
 The stability index $s(E)$  is defined to be the minimum of
$\mu(E'') - \mu(E')$ as $(E', E'') $  vary over pairs of locally free sheaves on $C$ which fit into an exact sequence 
$$0 \to E' \to E \to E'' \to 0.$$
\end{defin}
\noindent
The  bundle $E$ is stable if and only  if $s(E) >0$ and semistable if and only if $s(E) \geq 0$.\\

Let $C \to {\mathbb P}^1$ be a hyperelliptic curve of genus $g \geq 2$ 
defined over a field $k$ of characteristic not 2. We assume that 
$C(k) \neq \emptyset$.
For any line bundle $L$,  let   $M_C(r, L)$ be the moduli space of stable  rank $r$
 locally free sheaves of determinant 
$L$ and $M^{ss}_C(r, L)$ the moduli space of semistable  rank $r$
 locally free sheaves of determinant 
$L$.  Let $P$ be a closed point of $C(k)$. For $r \geq 2$, let $$ \EE \to C \times M_C(r, \OO_C(P))$$
be the Poincar\'e bundle (\cite{Tj}, \cite{Ra}).  Let $\EE_P$ be the restriction of $\EE$ to $ P \times M_C(r, \OO_C(P))$.
For $0 < h < r$, let $\PP = Gr(h, \EE_P)$ be the Grassmannian bundle parametrizing rank $h$
 locally free quotients of $\EE_P$.  A closed point of $\PP$ is a pair $(E, F)$ of vector bundles on $C$ with 
$E \in M_C(r, \OO_C(P))$ and $E(-P) \subset F \subset E$ and dim$(E_P/F_P) = h$ (\cite[Section 10]{Beauville}).

We have a morphism $s: \PP \to M_C(r, \OO_C(P)$ given by $s(E, F) = E$. This map is a 
Grassmann bundle. We recall the following Lemma from (\cite[Lemma 10.2]{Beauville}).

\begin{lemma}
\label{semistable} 
Let $(E, F) \in \PP$. If $g \geq 2$ and $E$ is general, i.e., $s(E) = g-1$, then $F$ is semistable with 
determinant $ \OO_C((1-h)P)$. 
\end{lemma}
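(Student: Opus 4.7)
The determinant claim follows directly from the defining exact sequence $0 \to F \to E \to E/F \to 0$: since $E/F$ is a skyscraper sheaf at $P$ of $k(P)$-dimension $h$, taking determinants yields $\det(F) = \det(E) \otimes \OO_C(-hP) = \OO_C((1-h)P)$, so $\mu(F) = (1-h)/r$.

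For semistability, I would argue by contradiction. Suppose $F' \subset F$ is a subbundle of rank $r'$ with $1 \leq r' \leq r-1$, saturated in $F$ (which we may assume, as saturating only increases the degree), and satisfying $\mu(F') > \mu(F)$. Let $E' \subset E$ be the saturation of $F'$ inside $E$; it is a rank-$r'$ subbundle of $E$. Because $F/F'$ is locally free, the torsion sheaf $E'/F'$ embeds into $E/F$, which is annihilated by the maximal ideal at $P$. Hence $E'/F'$ is a $k(P)$-vector space of some dimension $m$ with $0 \leq m \leq h$, and $\deg(E') = \deg(F') + m$.

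Next I would apply the hypothesis $s(E) = g-1$ to the sequence $0 \to E' \to E \to E/E' \to 0$, giving $\mu(E/E') - \mu(E') \geq g-1$, which rearranges to $r\deg(E') \leq r' - (g-1)\,r'(r-r')$. On the other hand, $\mu(F') > \mu(F)$ reads $r\deg(F') > r'(1-h)$, and by integrality $r\deg(F') \geq r'(1-h) + 1$. Substituting $\deg(E') = \deg(F') + m$ produces the key inequality
$$rm \;\leq\; r'\bigl[h - (g-1)(r-r')\bigr] - 1. \qquad (\ast)$$

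The step I expect to be the main obstacle is showing $(\ast)$ is inconsistent. The missing ingredient is a lower bound on $m$ coming from the condition $F' \subset F$ at the fibre over $P$: the image of $F'_P$ in $E_P$ must lie inside the $(r-h)$-dimensional subspace $F_P = F/E(-P) \subset E_P$, and a local analysis at $P$ (tracking the inclusions $F'(-P) \subset F' \cap E(-P) \subset E'(-P) \subset F'$) should yield $m \geq r' + h - r$ whenever this quantity is positive. With that in hand, a short case split on the sign of $r' + h - r$, together with $g \geq 2$, collapses $(\ast)$ to an impossible integer inequality and completes the contradiction. The subtlety is entirely in this fibre computation at $P$; the slope manipulations are routine bookkeeping.
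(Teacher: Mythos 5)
Your proposal is correct, but note that the paper itself gives no proof of this lemma: it is quoted verbatim from Beauville--Laszlo--Sorger \cite[Lemma 10.2]{Beauville}, so what you have written is a self-contained argument rather than a reconstruction of anything in the text. The two steps you flag as uncertain both go through. For the lower bound on $m$: since $E'$ is saturated in $E$, the fibre map $E'_P\to E_P$ is injective; from $0\to F'\to E'\to k(P)^m\to 0$ the image of $F'_P\to E'_P$ has dimension $r'-m$, and this image lands inside the image of $F_P\to E_P$, which equals $F/E(-P)$ and has dimension $r-h$ because $E(-P)\subset F\subset E$ forces $E/F\cong k(P)^h$. Hence $r'-m\le r-h$, i.e. $m\ge r'+h-r$. (Your identification $E'\cap F=F'$, needed so that $E'/F'$ embeds in $E/F$, uses exactly that $F'$ is saturated in $F$ while $E'/F'$ is torsion.) For the final contradiction, writing $s=r-r'$, inequality $(\ast)$ with $m\ge 0$ forces $h>(g-1)s$ in the case $h\le s$, impossible for $g\ge 2$; and with $m\ge r'+h-r$ it forces $h+(g-1)r'-r\le -1$ in the case $h>s$, which contradicts $h+(g-1)r'-r>(g-2)r'\ge 0$. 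One cosmetic caveat: you write ``$F_P=F/E(-P)$'', conflating the fibre $F_P$ (of dimension $r$) with the image of $F_P$ in $E_P$ (of dimension $r-h$); only the latter is what your argument uses. The approach is the natural one and is in the spirit of the slope estimates in Narasimhan--Ramanan and Beauville--Laszlo--Sorger; what your version buys is an explicit, elementary verification valid over any field, at the cost of the case analysis at the end.
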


We thus have the Hecke correspondence 

$$
\begin{array}{cccc}
\PP &  \buildrel{s'}\over{ - ~ -  \longrightarrow} & M_C^{ss}(r,  \OO_C((1-h)P)) \\
s\Big\downarrow & &  & \hskip 2cm  (\star) \\
M_C(r, \OO_C(P)).
\end{array}
$$
where the rational map $s'$ is defined by $s'(E,F) = F$ if $E$ is general.  Let  V be the open subscheme of 
$M_C(r, \OO_C(P)$ consisting of $E$ with $s(E)=g-1$ and $U = s^{-1}(V)$. 
Then $U$ is an open subscheme of $\PP$ and $s'/U: U \to M_C^{ss}(r,  \OO_C((1-h)P))$ is a morphism.

In the above Hecke correspondence we may replace $\OO_C(P)$ by any line bundle L of degree 1 and get a  Hecke correspondence

$$
\begin{array}{cccc}
\PP &  \buildrel{s'}\over{ - ~ -  \longrightarrow} & M_C^{ss}(r,  L\otimes \OO_C((-h)P)) \\
s\Big\downarrow & &  & \hskip 2cm  (\star) \\
M_C(r, L).
\end{array}
$$
\noindent
By tensoring with a suitable power of $\OO_C(P)$, one also gets a Hecke correspondence

$$
\begin{array}{cccc}
\PP &  \buildrel{s'}\over{  - ~ -   \longrightarrow} & M_C^{ss}(r,  L'\otimes \OO_C((-h)P)) \\
s\Big\downarrow & &  & \hskip 2cm  (\star) \\
M_C(r, L').
\end{array}
$$
\noindent
for any line bundle $L'$ of odd degree.

\begin{lemma}
\label{slope1}
Let $g=2$ and $L$ a line bundle of degree $1$ on $C$. Then for $E \in M_C(2, L)$, $s(E)=1$.
\end{lemma}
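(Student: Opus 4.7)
The plan is to sandwich $s(E)$ between $1$ and $1$ via two separate arguments.

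First I would observe that since $E$ has rank $2$, every proper subbundle in a short exact sequence $0 \to E' \to E \to E'' \to 0$ is a line bundle; write $M = E'$ and $N = E''$. Because $\det E = L$ has degree $1$, one gets
$$\mu(N) - \mu(M) = \deg N - \deg M = 1 - 2\deg M,$$
which is an odd integer, so $s(E)$ is odd. The stability of $E$ gives $\mu(M) < \mu(E) = 1/2$, i.e. $\deg M \leq 0$, hence the lower bound $s(E) \geq 1$.

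For the matching upper bound I would invoke the classical Nagata bound on the Segre invariant: every rank $2$ vector bundle on a smooth projective curve of genus $g$ admits a line subbundle $M$ with $\deg E - 2\deg M \leq g$. Applied with $g = 2$ and $\deg E = 1$, this produces a line subbundle $M_0 \subset E$ with $\deg M_0 \geq -1/2$, and therefore $\deg M_0 \geq 0$ since the degree is an integer. Combined with the stability bound $\deg M_0 \leq 0$, this forces $\deg M_0 = 0$; taking $E' = M_0$ in the definition of $s(E)$ then yields $s(E) \leq 1 - 2\cdot 0 = 1$.

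Combining the two estimates gives $s(E) = 1$. The only ingredient that goes beyond the definitions is Nagata's inequality $s(E) \leq g$; everything else is a parity and slope computation. I do not expect any serious obstacle: for $g=2$ and $\deg L = 1$, Nagata's bound is sharp, and an alternative route via a Brill--Noether-style dimension count for the scheme of line subbundles of $E$ of fixed degree would also work but is less economical.
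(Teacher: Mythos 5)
Your argument is correct and follows the same two-step strategy as the paper's proof: the lower bound $s(E)\ge 1$ comes from stability together with the integrality (indeed oddness) of $\deg E''-\deg E'$, and the upper bound comes from exhibiting a degree-zero line subbundle of $E$. The only difference is the source of that subbundle: you invoke Nagata's general bound $s(E)\le g$ and then use parity, whereas the paper cites Lemma 10.2 of Narasimhan--Ramanan to produce a degree-zero line bundle mapping injectively into $E$ (and checks it is saturated); the two inputs are interchangeable here, so this is essentially the paper's proof.
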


\begin{proof}
Since the stability index of a rank 2 vector bundle is an integer, $E$ being stable, $S(E) \geq 1$.

Let 
$$0 \to L' \to E \to L'' \to 0$$
be an exact sequence with $L', L"$ line bundles  on $C$. Then,
$s(E) \geq  \mu(L'') - \mu(L')  = deg(L'') - deg(L') = deg(E) - 2deg(L')$. 
Since $E$ is stable, for any subbundle $L'$ of $E$, $\mu(L') < \mu(E) = 1/2$ so that  $deg(L')
\leq 0$. 
We now prove that there  is a line subbundle $L_0$ of $E$ of 
degree zero. This will prove that $s(E) =1$.

In view of (\cite[Lemma 10.2]{NR}), there is a   line  bundle $L_0$ on $C$  of degree 0
and an injective map $L_0 \to E$.  Indeed $L_0$ is a line subbundle of $E$; otherwise it would 
generate a line subbundle $\tilde{L}_0$ of degree $\geq 1$, contradicting $\mu(E) = \frac{1}{2}$. 
Thus $s(E) = 1$. 
 \end{proof}
 
 For the rest of the section, we assume that genus of $C$ is 2, $r = 2$.  In view of (\ref{slope1}), for a line bundle $L$ of degree 1,   
 every element of $M_C(2, L)$ has stability index $1$ so that the morphism $s'$ is defined on the whole of
 $\PP$ and we have the Hecke diagram
 
 $$
\begin{array}{cccc}
\PP &  \buildrel{s'}\over{ \longrightarrow} & M_C^{ss}(2,  L\otimes \OO_C(-P)) \\
s\Big\downarrow & &  & \hskip 2cm  (\star) \\
M_C(2, L).
\end{array}
$$
where $s$ and $s'$ are both morphisms and $s$ is a ${\mathbb P}^1$ bundle.
The map $s'$ is a conic bundle.  The following lemma asserts that each fiber of $s'$
 at a point representing a stable bundle in $M_C^{ss}(2,  L\otimes \OO_C(-P))$
 is isomorphic to ${\mathbb P}^1$.

\begin{lemma}
\label{p1fibres} 
Let  $x  \in M_C^{s}(2,  L\otimes \OO_C(-P))(k)$ be a  stable point represented by rank 2 vector bundle $F$ on $C$.
 Then, $x$ belongs to the image of $s'$ and the fiber of $s'$ at $x$ is isomorphic to ${\mathbb P}^1$.
\end{lemma}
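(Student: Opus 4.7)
The plan is to identify the fiber $s'^{-1}(x)$ with the projective space of Hecke modifications of $F$ at $P$ and verify that each such modification is a stable rank $2$ bundle of determinant $L$.

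First, I would reformulate: a point $(E, W) \in \PP$ lying over $x$ corresponds, after fixing an isomorphism $\ker(E \to E_P/W) \cong F$, to an embedding $F \hookrightarrow E$ with $E/F$ a length-one skyscraper at $P$; equivalently, $E$ sits as a sub-bundle $F \subset E \subset F(P)$ with $E/F$ of length one at $P$. A local calculation at $P$ identifies the space of such intermediate $E$'s with the projective space of one-dimensional subspaces of the two-dimensional fiber $(F(P)/F)|_P$, yielding a natural $\mathbb{P}^1$. The determinant of $E$ so constructed is $\det F \otimes \OO_C(P) = L(-P) \otimes \OO_C(P) = L$, as required.

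The key step is verifying stability of each such $E$. Let $M \subset E$ be a line sub-bundle and consider the composition $M \hookrightarrow E \twoheadrightarrow E/F = \kappa(P)$. If it is zero, then $M \subset F$; since $E/M$ is torsion-free so is $F/M \hookrightarrow E/M$, whence $M$ is also a line sub-bundle of $F$, and stability of $F$ together with $\mu(F) = 0$ forces $\deg M \leq -1$. If it is non-zero, and hence surjective, then $M \cap F \cong M \otimes \OO_C(-P)$ is a rank-one subsheaf of $F$ of degree $\deg M - 1$; its saturation is a line sub-bundle of $F$ of degree at most $-1$ by stability of $F$, whence $\deg M \leq 0$. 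Either way $\deg M < 1/2 = \mu(E)$, so $E$ is stable. In particular this yields $E \in M_C(2, L)$, so $x$ lies in the image of $s'$.

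Globalizing the pointwise construction gives a morphism from $\mathbb{P}^1$ into $s'^{-1}(x)$ which is bijective on geometric points; a dimension count ($\PP$ has dimension $4$, while $M_C^{ss}(2, L(-P))$ has dimension $3$ and is smooth at the stable point $x$) shows both source and fiber are smooth of dimension one, giving the desired isomorphism. The main obstacle, and the heart of the argument, is the two-case stability analysis for $E$; the scheme-theoretic identification of the fiber with $\mathbb{P}^1$ is more formal, and can be made rigorous by viewing Hecke modification through the relative Quot scheme of length-one quotients of $F(P)$ supported at $P$.
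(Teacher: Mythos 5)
Your proof is correct and follows essentially the same route as the paper: both identify the fiber over $F$ with the $\mathbb{P}^1$ of length-one Hecke modifications at $P$ and deduce stability of each modification from stability of $F$ by saturating a would-be destabilizing line subbundle inside $F$. The only cosmetic difference is that the paper checks stability of the down-modification $E'=E(-P)\subset F$ (where every line subbundle already lies in $F$, so no case split is needed) and then twists by $\OO_C(P)$, whereas you work with $E\supset F$ directly and therefore need your two-case analysis on the composite $M\to E/F$.
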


\begin{proof}
Let $E'$ be a rank two vector bundle satisfying $F\otimes \OO_C(-P) \subset E' \subset F$ and $dim(F_P/E'_P) = 1$.
Then $det(E')  = det(F)\otimes \OO_C(-P)) = L\otimes \OO_C(-2P)$ 
 so that $deg(E') = -1$.  We claim that 
$E'$ is stable. Suppose not. Then there is a line subbundle $L'$ of $E'$ 
such that $deg(L') \geq \mu(E') =-1/2$. Thus $deg(L') \geq 0$. Let ${\tilde L'} $
 be the line subbundle of $F$ spanned by $L'$. Then 
$deg({\tilde L'}) \geq deg(L' )\geq 0 $. However $F$ is stable and $deg({\tilde L'}) <\mu(F)=0$. 
This leads to a contradiction. Thus $E'$ is stable. We have $ E' \subset F \subset E'\otimes \OO_C(P)$ with 
$E'\otimes \OO_C(P)$ stable and $det(E'\otimes \OO_C(P)) = L $. 
The pair $(E' \otimes \OO_C(P), F) \in \PP$ and maps to $F$ under $s'$. The bundles $E'$ satisfying 
$F\otimes \OO_C(-P) \subset E' \subset F$  and $dim(F_P/E'_P) = 1$ 
are pametrized by the 1-dimensional quotients of the vector space $F_P$. 
Thus the fiber at $x$ of the map $s'$ is isomorphic to $Gr(1, F_P) \simeq  \mathbb P^1$. 
\end{proof}

 \section{Twisted Hecke Correspondence }
 \label{twisted-hecke}

Let  $C$ be a smooth projective  hyperelliptic curve over a field $k$ with a rational point $P$
. Let $L$ be a line bundle of degree 1 over $C$. Let 

$$
\begin{array}{cccc}
\PP &  \buildrel{s'}\over{ - ~ -  \longrightarrow} & M_C^{ss}(r, L\otimes \OO_C((-h)P)) \\
s\Big\downarrow & &  & \hskip 2cm (\star) \\
M_C(r, L).
\end{array}
$$
\noindent
be the Hecke Correspondence as in  \S\ref{hecke}. Tensoring gives an action of $\rPic C$ on $M_C(r, L)$ 
 and  $M_C(r,{\OO}_C(-hP))$. There is also an action of $\rPic C$ rank  r  sheaves  $(E, F)$ on $C$ with 
$E \in M_C(r,\OO_C(P)),  E(-P) \subset F \subset E$ and dim($E_P/F_P) = h$. 
The maps $s, s'$ are compatible with the action of $_rPic(C)$.

Let $\tilde{\alpha} \in H^2(C, G_m)$ be an $r$-torsion element with a lift $\alpha \in H^2(C, \mu_r)$
such that $\alpha_{k^s} = 0$. Such a lift is possible since $C(k)$ is not empty. 
 Let $\beta: \Gamma_k \to~  _rPicC$ be a cocycle whose class $\tilde{\beta} \in H^1(k, _rPicC) $
  is the image of $\alpha$ under the projection in (**). Then $\beta$ gives twists $\PP^{\beta}$, $M_C(r, L)^{\beta}$ and 
$M_C(r, L \otimes\OO_C(-hP))^{\beta}$ together with morphisms

$$
\begin{array}{cccc}
\PP^\beta &  \buildrel{s'}\over{ - ~ -   \longrightarrow} & M_C^{ss. \beta}(r,  L\otimes\OO_C(-h)P))^\beta  \\
s\Big\downarrow & & & \hskip 2cm (\star\star) \\
M_C^\beta(r, L).
\end{array}
$$

We call this diagram a twisted Hecke correspondence. We may think of this as a Hecke correspondence 
between twisted sheaves of odd and even determinant in the case $r=2$ and $h=1$.

\begin{remark} Suppose $r=2, h=1$ and $\alpha \in H^2(C, \mu_2)$  is essentially trivial, i.e., 
$\tilde{\alpha} = 0$. There is a line bundle $N$ over $C$ such that under 
$\delta: PicC/2 \to H^1(k, \Pictwo C)$, the class of $N$ maps to $\tilde{\beta}$.
 We note that $\alpha_{k^s} =0$ implies that the degree of $N$ is even.
  Let ${\mathcal N}_{\alpha}$ be an invertible $\alpha$-twisted sheaf such that 
$({\mathcal N}_{\alpha})^2 = N$ (cf. \S\ref{twisted-moduli}).  
The map $(M_C^{\beta}(2, \OO_C(P))\to M_C(2, \OO_C(P))\otimes N)$ and 
 the map $M_C^{ss, \beta}(2, \OO) \to M_C^{ss}(2, N)$
given by tensoring with ${\mathcal N}_{\alpha}$ are isomorphisms
 which yield untwisting of the twisted moduli spaces. The twisted Hecke correspondence diagram

$$
\begin{array}{cccc}
\PP^\beta &  \buildrel{s'}\over{ - ~ -   \longrightarrow} & M_C^{ss. \beta}(2,  \OO))  \\
s\Big\downarrow & & &\hskip 2cm  (\star\star) \\
M_C^\beta(r, \OO_C(P)).
\end{array}
$$

reduces to the usual Hecke correspondence

$$
\begin{array}{cccc}
\PP  &  \buildrel{s'}\over{ - ~ -   \longrightarrow} & M_C^{ss}(2,  N)   \\
s\Big\downarrow & & & \hskip 2cm (\star\star) \\
M_C(2, N\otimes \OO_C(P)) 
\end{array}
$$
with degree of $N \otimes \OO_C(P)$ odd. 
\end{remark}

 \section{Local global principle for conic fibrations}
 \label{conic-fibrations}
 In this section we give a proof of a local global principle for rational points on certain conic
fibrations over global field of characteristic not 2 due to Colliot-Th\'el\`ene.  We begin with the following result of Suresh.
   
   \begin{theorem}(Suresh) 
   \label{suresh}
   Let $k$ be a field of characteristic zero and $K = k(t_1, \cdots , t_n)$.
Let $\alpha$ be a quaternion algebra over $K$ which is unramified on ${\mathbb P}^n_k$ except possibly at a unique 
codimension one  point  of ${\mathbb P}^n_k$.  Let $C$ be the conic over $K$ associated to $\alpha$.
Then $\Br(k) \to \Br_{nr}(K(C)/k)$ is onto. 
\end{theorem}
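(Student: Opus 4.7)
The plan is to lift $\gamma \in \Br_{nr}(K(C)/k)$ to a Brauer class $\tilde\gamma \in \Br(K)$ on the base $\mathbb{P}^n_k$ via pullback along $\pi : C \to \Spec K$, show that after adjusting by a multiple of $\alpha$ the lift has vanishing residue at every codimension-$1$ point of $\mathbb{P}^n_k$, and invoke purity $\Br(\mathbb{P}^n_k) = \Br(k)$ to conclude.

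\textbf{Lifting to $\Br(K)$.} Every $K$-trivial discrete valuation of $K(C)$ is $k$-trivial, so $\gamma$ is in particular unramified at every geometric valuation of $K(C)/K$, i.e.\ $\gamma \in \Br(C)$. The Leray spectral sequence for $\pi$ with coefficients $\mathbb{G}_m$, using $\pi_*\mathbb{G}_m = \mathbb{G}_m$, $R^1\pi_*\mathbb{G}_m = \Pic(\bar C) = \mathbb{Z}$ with trivial Galois action, and $R^2\pi_*\mathbb{G}_m = \Br(\bar C) = 0$, yields the Witt-type isomorphism $\Br(C) \cong \Br(K)/\langle [\alpha]\rangle$ (the differential $d_2$ sends a generator of $\Pic(\bar C)$ to $[\alpha]$). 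Hence $\gamma$ lifts to $\tilde\gamma \in \Br(K) = \Br(k(\mathbb{P}^n))$, uniquely determined modulo $\alpha$.

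\textbf{Residue analysis.} For each codimension-$1$ point $Q \neq P$ of $\mathbb{P}^n_k$, $\alpha$ is unramified at $Q$, so $C$ extends to a smooth projective conic bundle over $\Spec \mathcal{O}_{\mathbb{P}^n_k, Q}$ with special fiber $\bar C_Q$ a smooth geometrically integral conic over $\kappa(Q)$. The unique valuation of $K(C)$ above $v_Q$ has ramification index $1$ and residue field $\kappa(Q)(\bar C_Q)$, so the standard residue formula gives
\[
0 \;=\; \partial_{\bar C_Q}(\gamma) \;=\; \mathrm{res}_{\kappa(Q)(\bar C_Q)/\kappa(Q)}\bigl(\partial_Q(\tilde\gamma)\bigr);
\]
geometric integrality of $\bar C_Q$ makes this restriction injective on $H^1(-, \mathbb{Q}/\mathbb{Z})$, forcing $\partial_Q(\tilde\gamma) = 0$. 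At $P$ itself, choose a symbol presentation $\alpha \simeq (a, b)$ with $v_P(a) = 0$ and $v_P(b) = 1$ (possible by standard manipulations of quaternion symbols whenever $\alpha$ ramifies at $P$). Then $\partial_P(\alpha) = \overline a \in \kappa(P)^\times/(\kappa(P)^\times)^2$ and the reduced special fiber $\bar C_P^{\mathrm{red}}$, cut out by $\overline a\, X^2 - Z^2$, is an integral $\kappa(P)$-scheme with function field $\kappa(P)'(t)$, where $\kappa(P)' := \kappa(P)(\sqrt{\overline a})$ is exactly the quadratic extension defined by $\partial_P(\alpha)$. On any smooth projective model $X$ of $K(C)/k$ (resolution of singularities in characteristic zero), the strict transform of $\bar C_P^{\mathrm{red}}$ is a prime divisor above $P$, and the same residue calculation gives $\partial_P(\tilde\gamma)|_{\kappa(P)'(t)} = 0$, hence $\partial_P(\tilde\gamma)|_{\kappa(P)'} = 0$. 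Inflation--restriction identifies $\partial_P(\tilde\gamma)$ with an element of $\ker\bigl(H^1(\kappa(P), \mathbb{Q}/\mathbb{Z}) \to H^1(\kappa(P)', \mathbb{Q}/\mathbb{Z})\bigr) \cong H^1(\mathrm{Gal}(\kappa(P)'/\kappa(P)), \mathbb{Q}/\mathbb{Z}) = \mathbb{Z}/2$, generated precisely by $\partial_P(\alpha)$.

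\textbf{Conclusion and obstacle.} Thus $\partial_P(\tilde\gamma) \in \{0, \partial_P(\alpha)\}$, and replacing $\tilde\gamma$ by $\tilde\gamma - \alpha$ if necessary — a modification invisible at every $Q \neq P$, since $\alpha$ is unramified there — produces a lift that is unramified at every codimension-$1$ point of $\mathbb{P}^n_k$. The Faddeev/Bloch--Ogus sequence (equivalently purity $\Br(\mathbb{P}^n_k) = \Br(k)$) then places the modified $\tilde\gamma$ in $\Br(k)$, whose image in $\Br_{nr}(K(C)/k)$ is $\gamma$. The delicate step is the analysis at $P$: one must exhibit a symbol presentation avoiding a ``double-line'' (rank-$1$) degeneration of $\bar C_P$ — which is exactly where the hypothesis that $\alpha$ ramifies at a \emph{single} codimension-$1$ point is used — and correctly identify the function field of the divisor above $P$ in a smooth model of $K(C)/k$; once the geometry of $\bar C_P$ is in hand, the cohomological step reduces to inflation--restriction.
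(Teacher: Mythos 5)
Your proposal is correct and follows essentially the same route as the paper: lift the class from the conic to $\Br(K)$ via Witt's theorem, show the residues vanish at every codimension-one point other than the ramification divisor $Y$ by restricting to the function field of the (geometrically integral) special fiber, identify the residue at $Y$ as $0$ or $\partial_Y(\alpha)$ using the quadratic extension cut out by the degenerate fiber, subtract $\alpha$ if necessary, and conclude by purity for $\mathbb{P}^n_k$. The only difference is cosmetic: where you construct the relevant divisorial valuation above $Y$ explicitly from the rank-two degeneration of the conic, the paper cites a lemma of Parimala--Suresh guaranteeing a valuation $w$ over $v_Y$ whose residue field has the quadratic extension $\ell=\partial_Y(\alpha)$ as the algebraic closure of $\kappa(v_Y)$.
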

 
\begin{proof}
Let $\tilde{\beta} \in  \Br_{nr}(K(C)/k)$. Since $C$ is a conic and $ \Br_{nr}(K(C)/k) \subseteq \Br_{nr}(K(C)/K)$,
$\tilde{\beta }$ is in the image of $\Br(K)$ (\cite[Proposition 6.2.1]{CTSk}). Let $\beta \in \Br(K)$ which maps to $\tilde{\beta}$.

Let $Z$ be a codimension one point of ${\mathbb P}^n_k$ and $\nu$ the discrete valuation of $K$ given by $Z$. 
Let $\kappa(\nu)$ be the residue at $\nu$.
Let $\ell/k(\nu)$ be the cyclic extension given by the residue of $\alpha$ at $\nu$. 
Then there exists a discrete valuation $w$ on $K(C)$ extending $\nu$ with $\ell$ as the algebraic closure of 
$\kappa(\nu)$ in  the residue field $\kappa(w)$ at $w$ (\cite[Lemma 6.1]{PSIMRN}).  
We have $\partial_\nu(\beta)_{ \kappa(w)} = \partial_w(\tilde{\beta}) = 0$. 
Further  if $\ell \neq \kappa(\nu)$, then $\ell$  is the unique quadratic extension of  $\kappa(\nu)$   contained in $\kappa(w)$.

Suppose that $Z \neq Y$. Then $\ell = \kappa(\nu)$.  Since $\kappa(\nu)$ is algebraically closed in $\kappa(w)$, 
$H^1(\kappa(\nu), {\mathbb Q}/{\mathbb Z}) \to H^1(\kappa(w), {\mathbb Q}/{\mathbb Z}) $
is injective and $\partial_\nu(\beta) = 0$.

Suppose that $Z =  Y$. Suppose $\partial_\nu(\beta) = 0$. Then $\beta$ is unramified on ${\mathbb P}^n$ and hence comes from a class in $\Br(k)$. 
Suppose $\partial_\nu(\beta) \neq 0$. Since $\ell/\kappa(\nu)$ is a degree two extension and $\ell$ is the algebraic closure of $\kappa(\nu)$ 
in $\kappa(w)$,  $\partial_\nu(\beta) = \ell = \partial_\nu(\alpha)$. 
Since $\alpha$ and $\beta$ are unramified except at $Y$, $\beta- \alpha$ is unramified on ${\mathbb P}^n$.
Hence $\beta - \alpha$ comes from $\Br(k)$. Since the image if $\alpha$ in $\Br(K(C))$ is zero, 
$\tilde{\beta}$ is in the image of $\Br(k)$.
\end{proof}

\begin{remark}
If char$(k) = p > 0$ and not equal 2, then the above proof gives a surjection $\Br'(k) \to \Br'_{nr}(K(C)/k)$ where $\Br'$ 
denotes the subgroup of $\Br$  whose torsion is prime to $p$.  
\end{remark}

\begin{remark} 
In fact in the above theorem we can replace ${\mathbb P}^n$ by any smooth projective variety $X$ over $k$
with $\Br(k) \to \Br_{nr}(k(X)/k)$ onto. 
\end{remark}

We recall the following. 

\begin{theorem}\cite[Th\'eor\`eme 2]{CT88} 
\label{colliot}Let $k$ be a number field and  $X/k$ be a smooth surface with $X \to {\mathbb P}^1$ be a conic bundle
with at most 4 geometric  degenerate  fibres. Then  the Brauer-Manin obstruction  is the only obstruction to the Hasse principle for $X$. In particular, if
$\Br(k)  \to \Br(X) $ is onto, then the Hasse principle holds for $X$.
\end{theorem}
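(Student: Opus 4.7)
The approach I would take is the descent-fibration method developed by Colliot-Th\'el\`ene, Sansuc and Swinnerton-Dyer. First I would pass to a relatively minimal model of $\pi:X\to\mathbb{P}^1$, in which each geometric degenerate fiber is a union of two $(-1)$-curves meeting transversally. The numerical invariant then satisfies $K_X^2 = 8 - n \ge 4$ under the hypothesis $n \le 4$, placing $X$ birationally among $k$-forms of del Pezzo surfaces of degree $\ge 4$ (or simpler Hirzebruch-type surfaces); this structural reduction makes the geometric Picard lattice, and hence the Brauer group, completely explicit.

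Next I would compute $\Br(X)/\Br(k)$ via the Leray spectral sequence for $\pi$. Since $\Br(\mathbb{P}^1_k) = \Br(k)$, one obtains an injection $\Br(X)/\Br(k) \hookrightarrow H^1(k, \Pic X_{\overline{k}})$; and because $\Pic X_{\overline{k}}$ is generated by the class of a fiber, a section, and one component from each of the $n\le 4$ degenerate fibers, the resulting quotient is a finite elementary $2$-group with explicit generators coming from residues $\partial_i(\alpha) \in H^1(\kappa(P_i), \mathbb{Z}/2)$ of the generic conic class $\alpha \in \Br(k(\mathbb{P}^1))$ at the discriminant points $P_1,\dots,P_n$.

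The heart of the argument is the fibration method. Given $(P_v) \in X(\mathbb{A}_k)$ orthogonal to $\Br(X)$, I would project to $(t_v) \in \mathbb{P}^1(\mathbb{A}_k)$ and invoke weak approximation on $\mathbb{P}^1$ together with Harari's formal lemma to produce a rational point $t \in \mathbb{P}^1(k)$ arbitrarily close to $(t_v)$ at a prescribed finite set $S$ of places (including the bad and archimedean ones), with the property that $X_t$ is a smooth conic over $k$ having local points at every completion. Hasse-Minkowski applied to $X_t$ would then yield $X_t(k) \ne \emptyset$, and hence $X(k) \ne \emptyset$. The ``in particular'' statement follows immediately: if $\Br(k)\to\Br(X)$ is surjective, the Brauer-Manin pairing is trivial and the obstruction is vacuous.

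The main obstacle is ensuring that the specialized Brauer class $\alpha_t \in \Br(k)$ is trivial at \emph{every} place, not merely at the controlled set $S$; equivalently, that the chosen $t$ in fact makes the conic fiber split. Here the bound $n \le 4$ is decisive. The Brauer-Manin condition on $(P_v)$ pairs precisely against the $n$ residue classes $\partial_i(\alpha)$, so global reciprocity together with $n \le 4$ forces the obstruction to choosing a good $t$ to land in a finite group annihilated by the Brauer-Manin hypothesis. Concretely, this reduces to the Châtelet surface input of Colliot-Th\'el\`ene--Sansuc--Swinnerton-Dyer in the critical case $n=4$, which is the deepest point of the argument; for $n \le 3$ the Brauer quotient is small enough that the fibration method goes through more directly, while for $n \ge 5$ new obstructions appear and the statement is not known unconditionally.
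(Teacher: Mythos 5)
The paper does not prove this statement at all: it is quoted verbatim from Colliot-Th\'el\`ene's \emph{Surfaces rationnelles fibr\'ees en coniques de degr\'e 4} (Th\'eor\`eme 2), with the remark that it extends the Ch\^atelet surface results of Colliot-Th\'el\`ene--Sansuc--Swinnerton-Dyer and has an alternative treatment by Salberger. So the relevant question is whether your sketch would actually prove the theorem, and it would not: there is a genuine gap exactly at the point you flag as ``the main obstacle.'' After choosing $t\in\mathbb{P}^1(k)$ close to $(t_v)$ for $v$ in a finite set $S$, the fiber $X_t$ is guaranteed to be locally soluble only at the places of $S$ and at places of good reduction where $t$ stays away from the discriminant locus; there remain finitely many \emph{uncontrolled} places $v\notin S$ at which $t$ is $v$-adically close to a ramification point of the generic conic class, and at those places $X_t(k_v)$ may well be empty. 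Forcing solubility there is precisely what requires Schinzel's hypothesis in the general fibration method (as in Colliot-Th\'el\`ene--Sansuc and in Wittenberg's conditional result quoted in the introduction of this paper). Your appeal to ``global reciprocity together with $n\le 4$'' gives only a single linear relation among the local invariants and does not annihilate the contribution of these uncontrolled places; no count of residue classes rescues this. Likewise, the asserted reduction of the critical case $n=4$ to Ch\^atelet surfaces is not available: a general conic bundle with four geometric degenerate fibers need not be $k$-birational to a Ch\^atelet surface (the residues need not all be cut out by a single quadratic extension).

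The unconditional proof runs along a different axis. For $n\le 3$ one has $K_X^2\ge 5$ and $H^1(k,\Pic X_{\obar{k}})=0$, so the Hasse principle holds by the classical results on del Pezzo surfaces of degree $\ge 5$ (Enriques, Manin, Swinnerton-Dyer) --- no fibration argument is needed. For $n=4$ the surface is $k$-birational to a del Pezzo surface of degree $4$, i.e.\ a smooth intersection of two quadrics in $\mathbb{P}^4$, and the proof proceeds by the descent method: one passes to torsors under the N\'eron--Severi torus, which the Brauer--Manin hypothesis shows to be everywhere locally soluble, and one establishes rational points on these descent varieties using the deep results of \cite{CTSaSwD87} on intersections of two quadrics (themselves proved by a fibration into quadrics of higher dimension plus Salberger's device), or alternatively Salberger's zero-cycle method \cite{Sal88}. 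Your structural preliminaries (relative minimalization, $K_X^2=8-n$, the computation of $\Br(X)/\Br(k)$ from the residues at the discriminant points) are correct and do appear in the real proof, but the engine you propose for the endgame is the one tool known \emph{not} to suffice unconditionally here.
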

  
 This theorem is an extension of the case of Ch\^atelet surfaces (\cite[Theorem 8.11]{CTSaSwD87}). See also   
 Salberger \cite[Cor 7.8]{Sal88}.  
 
 \begin{lemma} 
 \label{hilbert}
 Let $Y \subset {\mathbb P}^{n+1}$ be an  integral hypersurface of degree $d$ over a  number  field $k$. 
 Let $A$ be a $k$-point  in ${\mathbb P}^{n+1}$
 outside $Y$. Let $F_A$ be the Grassmannian of lines in ${\mathbb P}^{n+1}$ passing through $A$. Then there is a Hilbertian set $H \subset F_A$ of 
 $k$-rational points such that for each $\ell \in H$, $\ell \cap Y$ is integral.   
 \end{lemma}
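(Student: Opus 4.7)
The plan is to exhibit $\ell \cap Y$ as the scheme-theoretic fiber of a finite morphism of degree $d$ with integral source, and then invoke Hilbert's irreducibility theorem over the number field $k$. Since $A \notin Y$, linear projection from $A$ restricts to a morphism $\pi : Y \to F_A \cong \mathbb{P}^n$, sending $p \in Y$ to the unique line $\overline{Ap}$ through $A$ and $p$. Because $Y$ is a closed subvariety of $\mathbb{P}^{n+1}$ missing the center of projection, $\pi$ is finite; by B\'ezout its degree equals $\deg Y = d$, since a general line through $A$ meets $Y$ transversally in $d$ points. For any line $\ell$ through $A$, the scheme-theoretic fiber $\pi^{-1}(\ell)$ coincides with the intersection $\ell \cap Y$ taken inside $\mathbb{P}^{n+1}$, as both are cut out on $Y$ by the homogeneous ideal of $\ell$.

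Since $Y$ is integral and $\pi$ is dominant and finite of degree $d$, the induced extension of function fields $k(F_A) \hookrightarrow k(Y)$ has degree $d$. Characteristic zero forces separability, so the extension is generated by a primitive element whose minimal polynomial has the form $f(t_1, \ldots, t_n; x) \in k[t_1, \ldots, t_n][x]$, of degree $d$ in $x$ and irreducible over $k(t_1, \ldots, t_n)$, where $t_1, \ldots, t_n$ are affine coordinates on some chart of $F_A$. For any $k$-rational line $\ell$ in that chart, the fiber $\pi^{-1}(\ell) = \ell \cap Y$ is isomorphic to $\mathrm{Spec}\,k[x]/(f(\ell;x))$, so integrality of $\ell \cap Y$ is equivalent to irreducibility of the specialization $f(\ell;x) \in k[x]$.

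The final step is to apply the several-variable form of Hilbert's irreducibility theorem over the number field $k$ to the separable irreducible polynomial $f$: it supplies a Hilbertian subset $H \subset F_A(k)$ whose complement is thin and on which $f(\ell;x)$ remains irreducible of degree $d$ over $k$. For every $\ell \in H$, $\ell \cap Y$ is then the spectrum of a degree $d$ field extension of $k$, hence integral, as required. There is essentially no substantive obstacle here: the one point that genuinely needs checking, namely irreducibility of the generic fiber polynomial, is immediate from the integrality of $Y$ together with finiteness of $\pi$; everything else is a formal appeal to Hilbert's theorem, with a standard chart-gluing argument passing from Hilbertian subsets of $\mathbb{A}^n(k)$ to a Hilbertian subset of $\mathbb{P}^n(k) = F_A(k)$.
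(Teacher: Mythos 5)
Your proposal is correct and follows essentially the same route as the paper: both project $Y$ from $A$ onto a copy of $\mathbb{P}^n$ identified with $F_A$ (the paper via an auxiliary hyperplane $T$), observe that $k(Y)/k(F_A)$ is a degree-$d$ extension because $Y$ is integral, and apply Hilbert irreducibility to make the fibres $\ell \cap Y$ integral on a Hilbertian set. The extra details you supply (finiteness of the projection, the primitive-element reduction, and restricting to a chart where the fibre is $\mathrm{Spec}\,k[x]/(f(\ell;x))$) are standard and consistent with the paper's argument.
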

 
 \begin{proof}
 We fix a hypersurface $T \subset {\mathbb P}^{n+1}$ with $A \not\in T$. The morphism ${\mathbb P}^{n+1} \setminus \{ A \}\to T$ sending a point $B$ to the intersection of the 
 line $AB$ joining $A$ and $B$  with $T$ restricts to a morphism $f : Y \to T$ of degree $d$. 
 For $M \in H$, the fibre $f^{-1}(M)$ is simply the line $AM$  intersected with $Y$. Since $T \simeq {\mathbb P}^n $ and 
 $k(Y) /k(T)$ is a finite extension of degree $d$, by Hilbert irreducibility theorem, there is a Hilbertian set $\tilde{H}$ of $k$-rational points in $T$ such that for every 
 $M \in \tilde{H}$, the fibre $f^{-1}(M)$ is integral. We have an isomorphism $T\buildrel{\psi}\over{\to} F_A$ sending a point $M \in H$ to the line $AM$. 
 The set  $H =  \psi(\tilde{H}) \subset F_A$ is the required Hilbertian set.  
 \end{proof}
 
 \begin{theorem} (Colliot-Th\'el\`ene)
 \label{colliot-hasse}
 Let $k$ be a number  field. Let $ n \geq 2 $ be an integer.  Let $Y \subset {\mathbb P}^n_k$ be a 
  geometrically integral quartic hypersurface. Let $U \subset {\mathbb P}^n_k$ be the complement of $Y$. Let $f : V \to U$ be
 a smooth conic bundle.   Then
the Hasse principle  holds for $V$.
  \end{theorem}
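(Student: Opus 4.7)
The plan is to reduce the higher-dimensional situation to the two-dimensional case handled by Theorem~\ref{colliot} via a classical fibration method: I slice $V \to U$ with sufficiently generic $k$-rational lines in $\mathbb{P}^n$ through a fixed point $A \notin Y$, control the Brauer group of the resulting conic-bundle surfaces using Theorem~\ref{suresh}, and then apply Theorem~\ref{colliot}.

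\textbf{Slicing and construction of $S_\ell$.} Assuming $V(k_v) \neq \emptyset$ for every place $v$, fix $A \in \mathbb{P}^n(k) \setminus Y$, which exists since $Y$ is a proper hypersurface. Let $F_A \simeq \mathbb{P}^{n-1}$ parametrize lines through $A$ and let $H \subset F_A(k)$ be the Hilbertian subset provided by Lemma~\ref{hilbert}, so that for each $\ell \in H$, $\ell \cap Y$ is a single closed point of degree $4$. For $\ell$ in the nonempty Zariski open subset of $F_A$ along which $V_\ell := f^{-1}(\ell \cap U)$ is smooth, a smooth projective relative compactification $S_\ell \to \ell \cong \mathbb{P}^1$ of this conic bundle exists with all degenerate fibres concentrated over the single degree-$4$ closed point $\ell \cap Y$; consequently $S_\ell \to \mathbb{P}^1$ has at most four geometric degenerate fibres.

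\textbf{Brauer group control and the surface-level Hasse principle.} The conic bundle $S_\ell \to \mathbb{P}^1$ is associated with a quaternion algebra over $k(t) = k(\ell)$ unramified everywhere on $\mathbb{P}^1_\ell$ except at the unique codimension-one point $\ell \cap Y$. Theorem~\ref{suresh} applied with $n=1$ then yields $\Br(k) \twoheadrightarrow \Br_{nr}(k(S_\ell)/k)$. Since $S_\ell$ is a smooth projective geometrically rational surface (its generic fibre is a conic, split over $\overline{k}(t)$ by Tsen's theorem), purity gives $\Br(S_\ell) = \Br_{nr}(k(S_\ell)/k)$, so $\Br(k) \to \Br(S_\ell)$ is surjective. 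By global reciprocity, classes pulled back from $\Br(k)$ produce a trivial Brauer--Manin pairing, so the Brauer--Manin obstruction on $S_\ell$ is vacuous, and Theorem~\ref{colliot} yields the Hasse principle for $S_\ell$.

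\textbf{Finding a line with local points and the main obstacle.} It remains to produce a single $\ell \in H$ with $S_\ell(k_v) \neq \emptyset$ at every place $v$. For each $v$, the local point $x_v \in V(k_v)$ together with $A$ determines a $k_v$-line $\ell_v$ for which $S_{\ell_v}(k_v) \neq \emptyset$, and smoothness of $f$ ensures that $\{\ell \in F_A(k_v) : S_\ell(k_v) \neq \emptyset\}$ is a nonempty $v$-adic open. Fix a finite set $S$ of places containing all archimedean places and the places of bad reduction of an integral model of $V \to \mathbb{P}^n$. Weak approximation on $F_A \simeq \mathbb{P}^{n-1}$ combined with Hilbert irreducibility produces $\ell \in H$ simultaneously close to $\ell_v$ for every $v \in S$, giving $S_\ell(k_v) \neq \emptyset$ at these places; at places outside $S$, good reduction, Hensel's lemma, and the fact that smooth conics over finite fields always split (Chevalley--Warning) force $S_\ell(k_v) \neq \emptyset$ automatically. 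The Hasse principle for $S_\ell$ then yields a $k$-point, which for sufficiently generic $\ell$ lies in the open part $V_\ell \subset V$, proving $V(k) \neq \emptyset$. The main obstacle is precisely this final orchestration of Hilbertian, local-approximation, and reduction conditions — the delicate part of the fibration method — especially the verification that at almost all places, the smooth surface $S_\ell$ acquires local points without any further imposed constraint.
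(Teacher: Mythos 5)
Your proposal is correct and follows essentially the same route as the paper: slice by $k$-rational lines through a fixed $A\in U(k)$, use Lemma~\ref{hilbert} to guarantee that $\ell\cap Y$ is an integral degree-$4$ point, use Theorem~\ref{suresh} (with Ekedahl's effective Hilbert irreducibility playing the role of your ``weak approximation on $F_A$ combined with Hilbert irreducibility'') to make $\Br(k)\to\Br(S_\ell)$ surjective, and conclude with Theorem~\ref{colliot}. The one genuine divergence is the step you single out as the main obstacle, namely producing $k_\nu$-points on $S_\ell$ at the infinitely many places outside $S$. You do this by good reduction, Hensel's lemma and Chevalley--Warning; this can be made to work, but it requires checking that the chosen line has controlled reduction at every $v\notin S$ (e.g.\ that its reduction is not contained in the reduction of $Y$, and that $q_v$ is large enough that some $\mathbb{F}_v$-point of the line avoids the at most four bad points), which is exactly the delicacy you flag. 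The paper sidesteps all of this with a cleaner observation: since every line in $F_A$ passes through $A$, the surface $V_L$ always contains the fixed smooth conic $V_A=f^{-1}(A)$ over $k$, which has $k_\nu$-points for all $\nu$ outside a finite set $S$ chosen once and for all; so local solvability at $\nu\notin S$ is automatic with no reduction-mod-$v$ analysis. This is the real payoff of insisting that all lines pass through a rational point of $U$, and it is worth adopting in place of your good-reduction argument.
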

  
  \begin{proof} Suppose $\prod_\nu V(k_\nu) \neq \emptyset$. 
  
 Let $A \in U(k)$ and  $F_A$ be the Grassmannian of lines in ${\mathbb P}^{n}$ passing through $A$. 
 Then,  by (\ref{hilbert}),   there is a Hilbertian set $H \subset F_A$ of 
 $k$-rational points such that for each $\ell \in H$, $\ell \cap Y$ is a closed point of degree 4.    
 
 Let  $V_A = f^{-1}(A)$ be the fibre of $f$ at $A$.  Then $V_A$ is a smooth conic and hence locally has rational points at all places outside a finite set 
$S$ of places of $k$. For each $\nu \in S$, we pick a point $M_\nu \in V(k_\nu)$ and set $B_\nu = f(M_\nu) \in U(k_\nu)$. Since 
$f : V \to U$ is smooth, by implicit function theorem, there is a $\nu$-adic neighborhood $\Omega_\nu$ of $B_\nu$ and a local section of $f : V(k_\nu) 
\to U(k_\nu)$ above $\Omega_\nu$. We consider for each $\nu \in S$, the line $\ell_\nu \in F_A(k_\nu)$ through $A$ and $B_\nu$ in ${\mathbb P}^n_{k_\nu}$. By 
a theorem of Ekedahl (\cite{Ek89}), since $Y$ is geometrically integral, $F_A$ is  rational and $H \subset F_A(k)$ is Hilbertian, the image of $H$ in $\prod_{\nu \in S} F_A(k_\nu)$ is dense.
We pick a line $L \in H$ such that the image of $L$ in $F_A(k_\nu)$ is close enough to $\ell_\nu$ such that $L(k_\nu) \cap \Omega_\nu \neq \emptyset$ for all $\nu \in S$. 
We then have the following properties for $L$ :-\\
1) $ \tilde{f} : V_L \to L$ is a smooth conic fibration except at a closed point of degree 4; $\tilde{f}$ is the restriction $f$ to $V_L$. \\
2) $A \in L(k)$ and the conic $\tilde{f}^{-1}(A) = V_A$ has a rational point  over $k_\nu$ for all $\nu \not\in S$. In particular,  $V_L(k_\nu) \neq \emptyset$ for all $\nu \not\in S$\\
3) For $\nu \in S$, $L(k_\nu) \cap \Omega_\nu \neq \emptyset$ so that the section to $f$ over $\Omega_\nu$ yields a $k_\nu$-rational point on $V_L$; in other words 
$V_L(k_\nu) \neq \emptyset$ for all $\nu \in S$.  \\
4) $L \cap Y$ is a closed point of degree 4 since $L \in H$.

Thus the conic fibration $V_L \to L$ has the property: $\prod_\nu V_L(k_\nu) \neq \emptyset$ and it is smooth outside a closed point of degree 4 in $L$.
 By (\ref{suresh}, \ref{colliot}), $V_L(k) \neq \emptyset$. In particular $V(k) \neq \emptyset$.  
  \end{proof}

\section{Period-index questions for genus 2 curves over number fields }
\label{index2-genus2}

Let $C$ be a smooth projective geometrically integral curve over a global field $k$ of characteristic not dividing $n$. 
We begin with the following proposition relating the existence of rational points on the  twisted moduli stack to
the existence of rational points on  the twisted moduli space associated to a $\mu_n$ gerbe on C.
 The proof of the proposition is due to Lieblich.

\begin{prop} (Lieblich)
\label{lieblich}
Let $k$ be a global field of characteristic not $n$ and $C$ a smooth projective geometrically integral curve over $k$.
 Let $L$ be a line bundle of degree one on C. Let $\alpha \in H^2(C, \mu_n)$ be a gerbe which is locally essentially trivial 
 and $\alpha_{k^s} = 0$. Let  $\sheaf{M}_C^\alpha(n,L)$ denote the moduli stack of rank $n$  
 $\alpha$-twisted stable locally free sheaves of determinant $L$ and $M_C^\alpha(n ,L)$ 
 the corresponding coarse moduli space. Then $\sheaf{M}_C^\alpha(n,L)(k) \neq \emptyset$ if and only if 
 $M_C^\alpha(n,L)(k) \neq \emptyset$.
\end{prop}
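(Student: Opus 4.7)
The plan is to identify the obstruction to lifting a $k$-point of the coarse moduli space to the stack as a Brauer class, verify it vanishes at every completion using local essential triviality, and conclude by Albert--Brauer--Hasse--Noether.

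The forward direction is immediate: a $k$-point of $\sheaf{M}_C^\alpha(n,L)$ projects to a $k$-point of $M_C^\alpha(n,L)$. For the converse, I would use that $\sheaf{M}_C^\alpha(n,L) \to M_C^\alpha(n,L)$ is a $\mu_n$-gerbe classified by some $\omega \in H^2(M_C^\alpha(n,L), \mu_n)$; then for $x \in M_C^\alpha(n,L)(k)$, the class $x^*\omega \in H^2(k, \mu_n)$ is exactly the obstruction to lifting $x$ to a stack point. Hilbert~90 together with the Kummer sequence identifies $H^2(k,\mu_n)\simeq{}_n\Br(k)$, so the goal reduces to showing $x^*\omega=0$ in ${}_n\Br(k)$.

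For local vanishing, fix a place $\nu$. By local essential triviality, $\alpha_{k_\nu}=\partial(N_\nu)$ for some line bundle $N_\nu$ on $C_{k_\nu}$, and there is an invertible $\alpha_{k_\nu}$-twisted sheaf $\LL_\nu$ with $\LL_\nu^{\otimes n}\simeq N_\nu$. Tensoring with $\LL_\nu$ produces an isomorphism of $\mu_n$-gerbes
\[
\sheaf{M}_C^\alpha(n,L)\otimes_k k_\nu \;\xrightarrow{\ \sim\ }\; \sheaf{M}_C(n,L\otimes N_\nu^{-1})\otimes_k k_\nu
\]
covering a matching isomorphism of coarse moduli spaces. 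Since $\deg N_\nu = n\deg\LL_\nu$ is divisible by $n$, the right-hand determinant has degree $\equiv 1\pmod{n}$, hence coprime to $n$. In this coprime-rank/degree situation, using the $k_\nu$-rational point inherited from $C(k)\neq\emptyset$, a universal Poincar\'e bundle exists by the classical Narasimhan--Ramanan construction; it provides a section of the $\mu_n$-gerbe and hence trivializes it. Consequently $x_{k_\nu}$ lifts and $(x^*\omega)_{k_\nu}=0$.

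Thus $x^*\omega$ lies in $\ker\!\bigl({}_n\Br(k)\to\prod_\nu {}_n\Br(k_\nu)\bigr)$, which vanishes for a global field by Albert--Brauer--Hasse--Noether. Therefore $x^*\omega=0$ and $x$ lifts to $\sheaf{M}_C^\alpha(n,L)(k)$. The hard part will be verifying rigorously that the local tensor equivalence intertwines the natural $\mu_n$-inertias on the two sides, so that it genuinely trivializes the gerbe rather than merely identifying the underlying coarse moduli spaces, and that the Poincar\'e bundle descends to $k_\nu$; beyond this technical checking, the argument is formal once local essential triviality is properly exploited.
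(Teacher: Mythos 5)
Your argument is correct and rests on the same mechanism as the paper's proof: untwist locally using essential triviality, produce a local section of the gerbe, and kill the resulting everywhere-locally-trivial class in $\Br(k)$ by Albert--Brauer--Hasse--Noether. The genuine difference is in how the global obstruction is packaged. The paper first proves that $\Br(k)\to\Br(M)$ is an isomorphism for $M=M_C^\alpha(n,L)$, using that $M$ is geometrically rational with $\Pic(M_{\bar k})\simeq \Z$ and $\Br(M_{\bar k})=0$ (imported from Lieblich), the Leray spectral sequence, and $M(k)\neq\emptyset$; the gerbe class is then a \emph{constant} class $\beta\in\Br(k)$, shown to vanish place by place. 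You instead pull the gerbe class back along the given point $x\in M(k)$ and work with $x^*\omega\in H^2(k,\mu_n)\simeq{}_n\Br(k)$ directly. Your route is more economical: it requires no knowledge of the geometry of $M_{\bar k}$, and it proves exactly what is needed (neutrality of the fibre gerbe at $x$) rather than the stronger global triviality of the gerbe; the paper's route, in exchange, yields that extra global statement. Two small blemishes in your write-up, neither fatal: you invoke $C(k)\neq\emptyset$, which is not among the hypotheses of the proposition --- what actually makes the local section exist is that $\deg(L\otimes N_\nu^{-1})\equiv 1\pmod n$ is coprime to the rank, so the untwisted gerbe over $k_\nu$ is trivial (the paper at the corresponding step simply asserts the existence over $k_\nu$ of stable bundles with prescribed determinant, so it is no more careful here); and the divisibility $n\mid\deg N_\nu$ should be deduced from $\alpha_{k^s}=0$, not from $\LL_\nu^{\otimes n}\simeq N_\nu$, since $\deg\LL_\nu$ is \emph{defined} as $\tfrac{1}{n}\deg N_\nu$ and that reasoning is circular as stated.
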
 

\begin{proof}
The map  $\sheaf{M}_C^\alpha(n,L) \to M_C^\alpha(n ,L) $ is a $G_m$ gerbe. Let  $M= M_C^\alpha(n,L)$. 
Suppose $M(k) \neq \emptyset$.  
The variety $M$  is geometrically rational with $Pic M_{\bar k}
\simeq {\mathbb Z}$ and $\Br(M_{\bar k}) = 0$ (\cite{Lb2}).  The spectral sequence

$$H^i(k, H^j(M_{\bar k}, G_m))  \implies  H^{i+j} (M, G_m))$$
yields a surjection  $\Br(k) \to \Br(M)$. Since $M(k) \neq \emptyset$, this map is an isomorphissm.
Thus the gerbe  $\sheaf{M}_C^\alpha(n,L) \to M$ is defined by a constant Brauer class $\beta \in \Br(k)
= \Br(M)$. Since $\alpha$  is locally essentially trivial, $\alpha = \delta ([N_\nu])$ for some line bundle $N_\nu$ on $C$. 
 Here $\delta: PicC/2 \to H^1(k, \Pictwo C) $ is the Kummer connecting homomorphism.
By (\S \ref{gerbes}),  $\sheaf{M}_C^{\alpha_{k_\nu}} (n, L) \simeq \sheaf{M}_C(n, L \otimes N_\nu)$.

Since there exist rank $n$ stable vector bundles with a prescribed determinant,
 $\sheaf{M}_C(n, L \otimes N_\nu)$ has a $k_v$-rational point. Hence the gerbe $\sheaf{M}_C^{\alpha_{k_\nu}}(2, L) \to 
 M_C^{\alpha_{k_\nu}}(n, L)$ is the trivial gerbe  so 
  that the class of $\beta$ in $ \Br(k_{\nu}) $ is zero. Since this is true for 
  all places $\nu$ of $k$, the class $\beta$ is zero in $\Br(k) = \Br(M)$. 
 Thus the gerbe $\sheaf{M}_C^\alpha(n,L) \to M$ is the trivial gerbe. Since $M(k)$ is not empty, 
 $\sheaf{M}_C^\alpha(n,L) (k)$ is not empty. 
\end {proof}

\begin{theorem}
\label{mainthm}
Let $k$ be a number field and $C$ a smooth projective geometrically integral 
hyperelliptic curve over $k$ of genus 2. Suppose $ C(k)$ is not empty. Let $\tilde{\alpha} \in \Brtwo(C) $  be a class which is  
zero in $\Br(C_{k_\nu}) $ for all places $\nu$  of  $k$. Let $\alpha \in H^2(C, \mu_2)$ be a lift of
 $\tilde{\alpha}$
with $\alpha_{\bar{k}} =0$.  Then $M_C^{ss, \alpha} (2, \sheaf{O}_C)$ is isomorphic to ${\mathbb P}^3$.
\end{theorem}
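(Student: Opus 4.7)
The plan is to identify $M_C^{ss,\alpha}(2,\OO_C)$ as a three-dimensional Severi--Brauer variety over $k$ that acquires a rational point at every completion, and then apply the Hasse principle. By the classical theorem of Narasimhan and Ramanan, since $C$ is of genus $2$ with a $k$-rational point, $M_C^{ss}(2,\OO_C)$ is identified with the linear system $|2\Theta|$ on $\mathrm{Jac}(C)$, giving $M_C^{ss}(2,\OO_C)\isom \P^3$ over $k$. Because $\alpha_{\bar k}=0$, the twisted moduli space $M_C^{ss,\alpha}(2,\OO_C)$ becomes isomorphic to $\P^3$ over $\bar k$. By the discussion of \S\ref{moduli-stacks}, it is obtained from a Galois cocycle $\beta\colon \Gamma_k\to \Pictwo C$ acting on $M_C^{ss}(2,\OO_C)$ by tensor product; since each $\tau\in\Pictwo C$ acts by a regular automorphism of the projective variety $\P^3$, this action factors through $\Aut(\P^3)=\PGL_4$. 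Hence $M_C^{ss,\alpha}(2,\OO_C)$ is a Severi--Brauer variety of dimension $3$ over $k$.

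Next I verify local points. The hypothesis $\tilde\alpha|_{k_\nu}=0$ in $\Br(C_{k_\nu})$, together with $\alpha_{\bar k}=0$ (which annihilates the $R^2f_*\mu_2$-component of $\alpha$), shows that $\alpha_{k_\nu}$ is essentially trivial, so $\alpha_{k_\nu}=\partial([N_\nu])$ for some line bundle $N_\nu$ of even degree on $C_{k_\nu}$. Following the remark in \S\ref{twisted-hecke}, choose an $\alpha_{k_\nu}$-twisted invertible sheaf $\LL_{\alpha_{k_\nu}}$ with $\LL_{\alpha_{k_\nu}}^{\otimes 2}\isom N_\nu$; tensoring with $\LL_{\alpha_{k_\nu}}$ yields an isomorphism
$$
M_C^{ss,\alpha_{k_\nu}}(2,\OO_C)\isom M_C^{ss}(2,N_\nu)
$$
of $k_\nu$-varieties. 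Writing $\deg N_\nu=2d$ and using a point $P\in C(k)$, the strictly semistable bundle $\OO_C(dP)\oplus(N_\nu\otimes\OO_C(-dP))$ exhibits a $k_\nu$-point of $M_C^{ss}(2,N_\nu)$, hence of $M_C^{ss,\alpha}(2,\OO_C)$.

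Finally, since $M_C^{ss,\alpha}(2,\OO_C)$ is a Severi--Brauer variety of dimension $3$ with $k_\nu$-points at every place $\nu$, the Albert--Brauer--Hasse--Noether Hasse principle forces its Brauer class to vanish, and hence $M_C^{ss,\alpha}(2,\OO_C)\isom\P^3$ over $k$. The main conceptual step is the identification of the twist as a Severi--Brauer variety; once one knows that the tensor action of $\Pictwo C$ on $M_C^{ss}(2,\OO_C)\isom\P^3$ is by regular automorphisms of projective space, it factors through $\PGL_4$ automatically, and the local-point count is elementary thanks to the abundance of strictly semistable rank-$2$ bundles with prescribed even-degree determinant.
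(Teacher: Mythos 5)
Your proposal is correct and follows essentially the same route as the paper: identify $M_C^{ss}(2,\OO_C)$ with $\P^3$ via Narasimhan--Ramanan, observe that the twist by the cocycle $\beta$ valued in $\Pictwo C$ is a three-dimensional Severi--Brauer variety, produce local points by untwisting at each place via an $\alpha_{k_\nu}$-twisted invertible sheaf, and conclude by the Albert--Brauer--Hasse--Noether theorem. The only differences are cosmetic: you make explicit that the $\Pictwo C$-action factors through $\PGL_4$ and you exhibit a concrete strictly semistable bundle $\OO_C(dP)\oplus(N_\nu\otimes\OO_C(-dP))$ as the local point, where the paper simply invokes the existence of (semi)stable bundles with prescribed determinant.
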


\begin{proof} 
Since the genus of $C$ is two, $M_C^{ss}(2, \sheaf{O}_C)_{\overline{k}}  \simeq {\mathbb P}^3_{\overline{k}} $ (\cite[\S 7, Theorem 2]{NR}).
Since $M_C^{ss, \alpha}(2, \sheaf{O}_C)$ is a twist of  $M_C^{ss}(2, \sheaf{O}_C)$, 
it is a Severi-Brauer variety associated to a central simple algebra $A$ of degree $4$ over $k$. 
The variety admits a rational point if and only if the algebra $A$ is a matrix algebra. For every place $\nu$ of $k$, the element 
$\alpha_{k_\nu}$ is essentially trivial. By  (\S \ref{gerbes}), $(M_C^{ss, \alpha}(2, \sheaf{O}_C)_{k_\nu} \simeq
M_C^{ss}(2, N_\nu)$  where $N_\nu \in PicC_{k_\nu}$ and $\delta ([N_\nu] ) = [\alpha_{k_\nu}]$ in 
$H^2( C_{k_\nu}, \mu_2)$. Hence $M_C^{ss, \alpha}(2, \sheaf{O}_C)(k_\nu) \neq \emptyset$.
This implies that $A \otimes k_\nu$ is the matrix algebra for every place $\nu$ of $k$. 
By Hasse-Brauer-Noether-Albert theoem, the algebra A is a matrix algebra over k. 
Hence  $M_C^{ss, \alpha}(2, \sheaf{O}_C)$ is isomorphic to ${\mathbb P}^3$ 
\end{proof}

\begin{theorem}
\label{essentially-trivial}
Let $k$ be a totally  number field and $C$ a smooth projective geometrically 
integral hyperelliptic curve over $k$ with a $k$-rational point. Let $\alpha \in H^2(C, \mu_2)$
 be a locally essentially trivial element
with $\alpha_{\bar{k}} = 0$.
Then for any $P \in C(k)$, $M_C^{\alpha}(2, \OO_C (P))(k) \neq \emptyset$.
\end{theorem}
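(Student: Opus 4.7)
The plan is to combine the twisted Hecke correspondence of Section~\ref{twisted-hecke}, the identification of $M_C^{ss,\alpha}(2,\OO_C)$ with $\mathbb{P}^3_k$ provided by Theorem~\ref{mainthm}, and the Hasse principle for conic bundles over complements of geometrically integral quartic hypersurfaces (Theorem~\ref{colliot-hasse}). Let $\beta:\Gamma_k\to\Pictwo C$ be a $1$-cocycle representing the image of $\alpha$ under the projection in $(\star\star)$, so that by the discussion at the end of Section~\ref{twisted-moduli} one has $M_C^\alpha(2,\OO_C(P))=M_C^\beta(2,\OO_C(P))$. The twisted Hecke diagram of Section~\ref{twisted-hecke} with $r=2$, $h=1$, $L=\OO_C(P)$ furnishes morphisms
$$ s:\PP^\beta\to M_C^\beta(2,\OO_C(P)), \qquad s':\PP^\beta\to M_C^{ss,\beta}(2,\OO_C), $$
in which $s$ is a $\mathbb{P}^1$-bundle. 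It therefore suffices to produce a $k$-rational point on $\PP^\beta$ and push it forward along $s$.

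To obtain that point, I would first invoke Theorem~\ref{mainthm} to fix a $k$-isomorphism $M_C^{ss,\beta}(2,\OO_C)\cong\mathbb{P}^3_k$. Geometrically, $s'$ is the classical Narasimhan--Ramanan Hecke conic bundle: by Lemma~\ref{p1fibres} every fiber over the stable locus is a smooth $\mathbb{P}^1$, and its discriminant is the Kummer quartic surface of the Jacobian $J(C)$. Because the strictly semistable locus is Galois-equivariantly defined, it descends to a geometrically integral quartic hypersurface $Y\subset\mathbb{P}^3_k$, and $s'$ restricts to a smooth conic bundle over $U:=\mathbb{P}^3_k\setminus Y$.

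Next I would verify local solvability: for each place $\nu$ of $k$, the essential triviality of $\alpha_{k_\nu}$ allows me to apply the remark at the end of Section~\ref{twisted-hecke}, which identifies the twisted Hecke diagram over $k_\nu$ (via tensoring with an invertible $\alpha_{k_\nu}$-twisted sheaf) with an ordinary Hecke diagram for a moduli space of odd-determinant rank-$2$ bundles on $C_{k_\nu}$. Since $C(k_\nu)\ne\emptyset$ and stable rank-$2$ bundles of any prescribed odd-degree determinant exist on $C_{k_\nu}$ (for example as generic nonsplit extensions of line bundles), this untwisted Hecke space has a $k_\nu$-rational point, and hence so does $\PP^\beta_{k_\nu}$. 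Applying Theorem~\ref{colliot-hasse} to the smooth conic bundle $(s')^{-1}(U)\to U$ then yields $\PP^\beta(k)\ne\emptyset$, and the image of any such point under $s$ is the desired $k$-rational point of $M_C^\alpha(2,\OO_C(P))$.

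The step I expect to be the main obstacle is the geometric claim in the second paragraph: one has to verify that, after twisting by $\beta$ and trivializing the Severi--Brauer variety $M_C^{ss,\beta}(2,\OO_C)$ as $\mathbb{P}^3_k$, the discriminant locus of $s'$ descends to a geometrically integral quartic hypersurface, and that $s'$ remains a smooth conic bundle over its complement. This requires an intrinsic, Galois-equivariant identification of the strictly semistable locus with a $k$-form of the Kummer quartic of $J(C)$, so that Theorem~\ref{colliot-hasse} genuinely applies to the conic bundle produced by the twisted Hecke correspondence.
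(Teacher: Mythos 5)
Your proposal is correct and follows essentially the same route as the paper: the twisted Hecke correspondence, the identification $M_C^{ss,\alpha}(2,\OO_C)\cong\mathbb{P}^3$ from Theorem~\ref{mainthm}, local solvability via untwisting at each place, and the Hasse principle of Theorem~\ref{colliot-hasse} applied to the conic bundle over the complement of the quartic. The geometric step you flag as the main obstacle is exactly the one the paper disposes of by citing Narasimhan--Ramanan for the fact that the strictly semistable locus is a geometrically integral quartic surface which over $\bar{k}$ is the Kummer surface of $J_C$.
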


\begin{proof}
We have the twisted Hecke correspondence:

$$
\begin{array}{cccc}
\PP^\alpha &  \buildrel{s'}\over{\longrightarrow} & M_C^{ss, \alpha}(2,  \OO_C)  \\
s\Big\downarrow & & & \hskip 2cm (\star\star) \\
M_C^\alpha(2, \OO_C(P)).
\end{array}
$$
By (\ref{mainthm}), $ M_C^{ss, \alpha}(2,  \OO_C) \simeq \mathbb{P}^3$.
Further the stable moduli space $ U = M_C^{s, \alpha}(2,  \OO_C)$ is a open subscheme of
$M_C^{ss, \alpha}(2,  \OO_C)$ whose complement is a geometrically integral  quartic surface $Y$ which over the 
algebraic closure is the Kummer surface associated to the canonical involution $x \mapsto -x$ on $J_C$ (\cite[Section 4, Remark 2]{N1R1}).
Let $V = s'^{-1}(U)$. Then $s' : V \to U$   is a smooth conic fibration.

Let $\nu$ be a place of $k$. Since $\alpha_{k_\nu}$  is  essentially  trivial, 
the Hecke diagram   untwists over $k_\nu$.  Since $C_{k_\nu}$ admits a rank 2 stable bundles 
of any determinant and the untwisted fibre over such points is the projective line (\ref{p1fibres}), 
it follows that $V(k_\nu) \neq \emptyset$. 
Hence by (\ref{colliot-hasse}), $V(k)$ is not empty.  Any rational point of $V$ maps to a rational point of 
 $M_C^{\alpha}(2, \OO_C (P))$.
 \end{proof}

\begin{cor}
\label{per-ind}
Let $k$ be a number field and $C$ a smooth projective geometrically integral hyperelliptic curve over $k$  of genus 2 with a
 zero-cycle of degree one. Then for every element $\tilde{\alpha}$ in $ _2\Sha \Br(C)$, index of 
$\tilde{\alpha}$ divides 2.
\end{cor}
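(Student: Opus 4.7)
The plan is to reduce Corollary~\ref{per-ind} from the assumption of a zero-cycle of degree one to the rational-point hypothesis of Theorem~\ref{essentially-trivial}, and then to combine that theorem with Proposition~\ref{lieblich} and the twisted-sheaf criterion for the index. The first step uses that a zero-cycle of degree one on $C$ forces the existence of a closed point $Q\in C$ whose residue field $k_Q$ has \emph{odd} degree $d$ over $k$: otherwise every closed point appearing in a representative cycle would contribute an even amount to the total degree. Setting $C':=C\otimes_k k_Q$, we have $C'(k_Q)\neq\emptyset$, and the base-changed class $\tilde{\alpha}_{k_Q}$ still lies in ${}_2\Sha \Br(C')$, because each place $w$ of $k_Q$ sits over a place $\nu$ of $k$ at which $\tilde{\alpha}$ already vanishes in $\Br(C_{k_\nu})$.

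With a rational point on $C'$ in hand, the splitting $(\star\star)$ of \S\ref{twisted-moduli} produces a lift $\alpha\in H^2(C',\mu_2)$ of $\tilde{\alpha}_{k_Q}$ with $\alpha_{\overline{k}_Q}=0$; the local vanishing of $\tilde{\alpha}_{k_Q}$ in $\Br$ says precisely that $\alpha$ is locally essentially trivial. Theorem~\ref{essentially-trivial} applied to $(C',\alpha,P)$ for $P\in C'(k_Q)$ then yields a $k_Q$-rational point of $M_{C'}^\alpha(2,\OO_{C'}(P))$, which Proposition~\ref{lieblich} promotes to a $k_Q$-point of the moduli \emph{stack}, i.e.\ to a stable locally free $\alpha$-twisted sheaf $\FF$ of rank~$2$ on $C'$. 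The sheaf $\EEnd(\FF)$ then descends to an Azumaya algebra of degree~$2$ on $C'$ whose Brauer class is $\tilde{\alpha}_{k_Q}$, so $\mathrm{ind}(\tilde{\alpha}_{k_Q})\mid 2$.

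Finally one must descend the index bound from $k_Q$ back to $k$. Since $\mathrm{period}(\tilde{\alpha})\mid 2$ the index $\mathrm{ind}(\tilde{\alpha})$ is a power of~$2$, and odd-degree base change preserves the $2$-primary part of the index, so $\mathrm{ind}(\tilde{\alpha})=\mathrm{ind}(\tilde{\alpha}_{k_Q})\mid 2$. The substantive geometric content has already been absorbed into Theorem~\ref{essentially-trivial} (twisted Hecke correspondence, the identification $M_C^{ss,\alpha}(2,\OO_C)\cong\mathbb{P}^3$ via Hasse-Brauer-Noether, and Colliot-Th\'el\`ene's Hasse principle for conic bundles above the complement of a quartic in $\mathbb{P}^3$); the main obstacle I anticipate within the corollary itself is the bookkeeping of the first step, i.e.\ verifying that base-change to an odd-degree residue field genuinely preserves both the $\Sha$-condition and the local essential triviality of the chosen lift.
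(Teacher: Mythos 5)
Your proposal is correct and follows essentially the same route as the paper: reduce to $C(k)\neq\emptyset$ via an odd-degree closed point (the paper dispatches this in one sentence, noting that the index of a $2$-torsion class is unchanged under odd-degree extension, while you spell out that the $\Sha$-condition and local essential triviality also persist), then choose a lift $\alpha$ with $\alpha_{\overline{k}}=0$, apply Theorem~\ref{essentially-trivial} and Proposition~\ref{lieblich}, and conclude via the degree-$2$ Azumaya algebra $\EEnd(\FF)$ of a rank-$2$ stable $\alpha$-twisted sheaf. No substantive difference from the paper's argument.
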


\begin{proof} Since the index of  elements in $_2\Br(C)$ does not change  over an odd degree of $k$, 
we assume that $C(k) \neq \empty$. 
If $\alpha \in H^2(C, \mu_2)$ is a lift of  $\tilde{\alpha}$  with $\alpha_{\bar{k}} =0$, then, by hypothesis,
$\alpha$ is locally essentially trivial. Hence by  (\ref{essentially-trivial}),  $M_C^{\alpha}(2, \OO_C (P))$ 
admits a $k$-rational point. By (\ref{lieblich}),  $\sheaf{M}_C^\alpha(n,L)(k) \neq \emptyset$. If $\sheaf{E}$
is an $\alpha$ twisted sheaf of rank two, $End(\sheaf{E}) $ is a degree two Azumaya algebra representing 
the class of  $\tilde{\alpha}$ in the Brauer group of $C$. Thus  the index of $\tilde{\alpha}$ divides 2.

\end{proof}

\begin{cor}
Let $k$ be a totally imaginary number field and $C$ a smooth projective 
geometrically integral  curve  of genus 2 over $k$  with a  zero-cycle of degree one.
 Let $\mathcal{C}$
 be a regular proper model of $C$ over the ring of integers $\OO $ in $k$. 
 Then for an $\tilde{\alpha} \in \Brtwo(\mathcal{C})$, the  index of $\tilde{\alpha}$ divides 2.
\end{cor}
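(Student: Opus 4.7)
The plan is to reduce the statement directly to the previous Corollary~\ref{per-ind}. The key point is that for a totally imaginary number field $k$, the introduction already records the identification
\[
\Sha\Br(k(C)) \;=\; \Br(\CC),
\]
which uses Grothendieck's theorem (the image of $\Br(\CC)$ in $\Br(k_\nu(C))$ vanishes at every finite place) together with the absence of real places. Consequently, any $\tilde{\alpha}\in \Brtwo(\CC)$ lifts, after this identification, to an element of ${}_2\Sha\Br(k(C))$.

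Next, I would observe that every smooth projective curve of genus $2$ is automatically hyperelliptic, so the hypotheses of Corollary~\ref{per-ind} are met: $C$ is a smooth projective geometrically integral hyperelliptic curve of genus $2$ over the number field $k$, and by hypothesis it carries a zero-cycle of degree one. Applying that corollary to the image $\tilde{\alpha}\in {}_2\Sha\Br(C)$ produced in the previous step yields $\mathrm{index}(\tilde{\alpha})\mid 2$. Since the index of an element of $\Br(\CC)$ is computed from any central simple algebra representing its image in $\Br(k(C))$, this bound transfers back to $\Brtwo(\CC)$.

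There is essentially no obstacle: the statement is a direct specialization of Corollary~\ref{per-ind} once one invokes the equality $\Sha\Br(k(C))=\Br(\CC)$ valid over totally imaginary number fields. The only thing to double-check is that the ``zero-cycle of degree one'' hypothesis passes through the odd-degree descent argument used in the proof of Corollary~\ref{per-ind} (where one reduces to $C(k)\neq\emptyset$), but this is built into the formulation of that corollary and requires no further work here.
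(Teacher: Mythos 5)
Your argument is correct and is essentially the paper's own proof: the paper likewise reduces to Corollary~\ref{per-ind} by verifying that $\Br(\mathcal{C})\subseteq \Sha\Br(C)$, which follows from Grothendieck's theorem that $\Br(\mathcal{C}_{\OO_\nu})=0$ for each finite place $\nu$ (the regular proper relative curve over $\OO_\nu$ has special fibre over a finite field), together with the absence of real places since $k$ is totally imaginary.
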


\begin{proof}
We need only to verify that $\Br\mathcal{C}$ is contained in  $\Sha \Br(C)$.  Over the valuation ring 
${\OO}_\nu$, $\mathcal{C}_{\OO_\nu}$ is a relative curve which is regular and the
 special fiber is a curve over a finite field. By a theorem of Grothendieck (cf. \cite[Thm 10.3.1]{CTSk}), the Brauer group of $\mathcal{C}_{\OO_\nu}$ is zero.
\end{proof}

\section{Pencils of quadrics and hyperelliptic curves} 
\label{pencils-hypell}
Let $k$ be a field of characteristic not 2.
Let $Q_1$ and $Q_2$ be two regular quadratic forms on a vector space $V$ of 
dimension $2n+2$, $n \geq 2$ over $k$.
Let $Q = tQ_1  -  Q_2$ be the associated pencil. Let $f(t) = $ disc$(Q) = (-1)^{n+1}$det$(Q) \in k[t]$. 
The polynomial is determined up to a square in $k^*$. 
We assume throughout that $f(t)$ is a product of distinct linear factors over the 
separable closure $k^s$ of $k$. We call such a pencil a {\it nonsingular} pencil.
 Let $X_Q$ be the base 
locus of the pencil $Q$, namely the intersection of the quadrics defined by 
$Q_1$ and $Q_2$ in $\P(V)$.  The variety  $X_Q$ is smooth. 
Let $T = Q_2Q_1^{-1}$. 
 
For a nonsingular quadratic form $q$ of rank $2n+2$ over $k$, we recall the definition of the  following groups:
$$\Sim(q) (k) = \{ \alpha \in \GL(V) \mid \alpha q \alpha^t = 
\lambda q  ~  \text{ for some }\lambda \in k^*  \} $$
$$\Sim^+(q) (k) = \{ \alpha \in \GL(V) \mid \alpha q \alpha^t = 
\lambda q  ~  \text{ for some }\lambda \in k^* ~\text{with} ~ \lambda^{n+1} = \det{\alpha}\} $$
$$\PSim^+(q) =   \Sim^+(q)/\Gm  .$$

We define 
$$\Sim_Q(k) = \{   \alpha \in \GL(V) \mid \alpha Q_1 \alpha^t = 
\lambda Q_1, \alpha Q_2\alpha^t = \lambda Q_2  ~\text{ with }~\lambda \in k^*\}.$$
Let  $\mu : \Sim_Q \to \Gm $  be the   similarity map given by $\mu(\alpha) = \lambda$.
We also define 
$$\Sim^+_Q(k) = \{   \alpha \in \GL(V) \mid \alpha Q_1 \alpha^t = 
\lambda Q_1, \alpha Q_2\alpha^t = \lambda Q_2  ~\text{ with }~\lambda \in k^*,  \lambda^{n+1} = \det(\alpha)\}$$
and
$\AAut^+_Q  =\Sim^+_Q/\Gm$. 

Since $H^1(k, \Gm)$ is trivial, we have a surjection $\Sim^+_Q(k) \to \AAut^+_Q(k)$.

We have   $\AAut^+_Q  \subset \PSim^+(Q_1) \cap \PSim^+(Q_2) \subset \PGL(V)$.

\begin{prop}\cite[Page 37]{Reid} 
\label{aut+}
\label{reed}
$\AAut^+_Q(k^s) \simeq  ({\mathbb Z}/2{\mathbb Z})^{2n}$. 
 \end{prop}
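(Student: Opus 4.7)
The plan is to exploit the simultaneous diagonalizability of the pencil over $k^s$ to reduce everything to the combinatorics of sign vectors. Since $f(t) = \mathrm{disc}(tQ_1 - Q_2)$ has $2n+2$ distinct roots in $k^s$ (the definition of nonsingular), the operator $T = Q_1^{-1}Q_2$ has $2n+2$ distinct eigenvalues, so $V_{k^s}$ admits a basis $e_1, \ldots, e_{2n+2}$ of eigenvectors of $T$. In this basis $Q_1$ is represented by a diagonal matrix $\mathrm{diag}(c_1, \ldots, c_{2n+2})$ and $Q_2$ by $\mathrm{diag}(\lambda_1 c_1, \ldots, \lambda_{2n+2} c_{2n+2})$, the $\lambda_i$ being the distinct roots of $f$.

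First I would verify that any $\alpha \in \Sim_Q(k^s)$ commutes with $T$: from $\alpha Q_i \alpha^t = \lambda Q_i$ for $i = 1, 2$ one immediately computes
\[
\alpha^{-1} T \alpha = \alpha^{-1} Q_1^{-1} Q_2 \alpha = (\lambda\, \alpha^{-1} Q_1^{-1}(\alpha^t)^{-1})(\lambda^{-1} \alpha^t Q_2 \alpha) = Q_1^{-1} Q_2 = T.
\]
Because $T$ has distinct eigenvalues, $\alpha$ must preserve each one-dimensional eigenspace, hence is diagonal: $\alpha(e_i) = a_i e_i$. Plugging back into $\alpha Q_1 \alpha^t = \mu(\alpha) Q_1$ and reading off the diagonal entries gives $a_i^2 = \mu(\alpha)$ for every $i$, and the same condition from $Q_2$ is automatic.

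Next, over $k^s$ every similitude factor $\mu(\alpha)$ is a square, so we may pick $\nu \in (k^s)^{\times}$ with $\nu^2 = \mu(\alpha)$ and write $a_i = \nu \epsilon_i$ with $\epsilon_i \in \{\pm 1\}$. The resulting map
\[
(k^s)^{\times} \times \{\pm 1\}^{2n+2} \longrightarrow \Sim_Q(k^s), \qquad (\nu, (\epsilon_i)) \mapsto \nu\, \mathrm{diag}(\epsilon_1, \ldots, \epsilon_{2n+2})
\]
is surjective with kernel the diagonally embedded $\{\pm 1\}$ (flipping $\nu$ and all $\epsilon_i$ simultaneously). A direct computation gives $\det(\alpha) = \nu^{2n+2} \prod_i \epsilon_i = \mu(\alpha)^{n+1}\prod_i \epsilon_i$, so the $+$ condition $\mu(\alpha)^{n+1} = \det(\alpha)$ is precisely $\prod_i \epsilon_i = 1$.

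Finally, passing to the quotient by the central $\Gm$ (which consists of the $\nu$-scaling alone), we obtain
\[
\AAut^+_Q(k^s) \;\cong\; \bigl\{(\epsilon_1, \ldots, \epsilon_{2n+2}) \in \{\pm 1\}^{2n+2} : \textstyle\prod_i \epsilon_i = 1\bigr\}\big/\{\pm(1,\ldots,1)\}.
\]
The numerator has order $2^{2n+1}$, and since $2n+2$ is even the element $(-1,\ldots,-1)$ does lie in it, so dividing yields a group of order $2^{2n}$ of exponent $2$, i.e. $(\Z/2\Z)^{2n}$. The only nontrivial point to watch is that the kernel of the parametrization really is just $\{\pm 1\}$ (no further identifications come from the condition $\prod \epsilon_i = 1$), which I would verify by the observation that $(-1,\ldots,-1)$ acts trivially on $\P(V)$ while any other nontrivial sign vector does not — this is exactly what makes the final quotient clean.
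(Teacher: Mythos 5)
Your proposal is correct and follows essentially the same route as the paper: simultaneously diagonalize the pencil over $k^s$ (your eigenvectors of $Q_1^{-1}Q_2$ are the paper's radical vectors of $\lambda_i Q_1 - Q_2$), observe that any similitude of the pencil must be diagonal because it commutes with an operator having distinct eigenvalues, and reduce to sign vectors with product $1$ modulo $\pm(1,\dots,1)$. If anything, your treatment is slightly more explicit than the paper's about why every class in $\AAut^+_Q(k^s)$ lifts to an honest isometry in $\SL_{2n+2}(k^s)$ (extracting the square root of the similitude factor over $k^s$), a step the paper asserts without detail.
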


\begin{proof}
Let $\lambda_1, \cdots , \lambda_{2n+2}$ be the distinct zeroes of $f(t)$  in $k^s$. 
Let $v_i$ be a nonzero vector in the radical of the quadratic form $\lambda_i Q_1 - Q_2$, $ 1 \leq i \leq 2n+2$. Then the set of 
vectors $\{ v_1, \cdots , v_{2n+2} \}$ form an orthogonal basis for $Q_1$ and $Q_2$
 simultaneously. Further after scaling 
$v_i$, $ 1 \leq i \leq 2n+2$,  we may assume that $Q_1$ is represented by  
 the  identity matrix and $Q_2$ is the diagonal matrix with diagonal entries 
 $\lambda_1, \cdots , \lambda_{2n+2}$ with respect to the basis $\{v_1, \cdots  , v_{2n+2} \}$ of $V$. 
 
 Let $T \in \SL_{2n+2}(k^s)$ with $T Q_1 T^t = Q_1$ and $T Q_2 T^t = Q_2$.
 Then $T^{-1} = T^t$ and $T Q_2 = Q_2 T$. Since $Q_2$ is a diagonal matrix with distinct entries on the diagonal, 
 it follows that $T$ is a diagonal matrix with diagonal entries $\pm 1$. Since $T \in \SL_{2n+2}(k^s)$, 
 the product of all the entries  
 the diagonal of $T$ is 1.  Since every element in $\AAut^+_Q(k^s)$  admits a lift $T   \in \SL_{2n+2}(k^s)$  
 with $T Q_1 T^t = Q_2$ and $T Q_2 T^t = Q_2$,
 it follows that  
 $$\AAut^+_Q(k^s) = \{ \alpha \in \SL_{2n+2}(k^s)  \mid \alpha \text{ a diagonal 
 matrix with diagonal entries } \pm 1 \} / \{ \pm 1 \} $$ $$  \simeq  ({\mathbb Z}/2{\mathbb Z})^{2n}. $$ 
\end{proof}

The above proposition can also be deduced from  a  rational description of  the group scheme $\AAut^+_Q$ which is 
due to J.-P. Tignol. 

\begin{prop} Let $u  = Q_2Q_1^{-1} \in GL(V)$.  Then $k[u]$ is an \'etale $k$-algebra of dimension  $2n+2$
and there is an isomorphism of group schemes 
$  \AAut^+_Q \simeq R^1_{k[u]/k}(\mu_2)/\mu_2.$
\end{prop}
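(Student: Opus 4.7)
The plan is to exploit the fact that any element of $\Sim^+_Q$ commutes with $u$, which places it in the commutative subalgebra $k[u]$, and then to use the adjoint involution of $Q_1$ to pin down which elements of $k[u]^*$ actually arise.

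For the étale claim, observe that $\det(tI-u) = \det(Q_1)^{-1}\det(tQ_1-Q_2)$, so the characteristic polynomial $\chi_u(t)$ agrees with $f(t)$ up to a unit and hence has $2n+2$ distinct roots over $k^s$. This forces the minimal polynomial of $u$ to equal $\chi_u$, so $u$ admits a cyclic vector, $V$ becomes a free $k[u]$-module of rank $1$, and $k[u] \simeq k[t]/\chi_u(t)$ is étale of dimension $2n+2$. Two consequences I will use repeatedly: the centralizer of $u$ in $M_{2n+2}(k)$ is exactly $k[u]$, and for $\alpha \in k[u]$ the $k$-linear determinant on $V$ coincides with $N_{k[u]/k}(\alpha)$.

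Given $\alpha \in \Sim^+_Q$ with similitude factor $\lambda$, the relations $\alpha Q_i\alpha^t = \lambda Q_i$ for $i=1,2$ yield $Q_1^{-1}\alpha Q_1 = Q_2^{-1}\alpha Q_2$ and hence $\alpha u = u\alpha$, so $\alpha \in k[u]^*$. The adjoint involution $\sigma(x)=Q_1 x^t Q_1^{-1}$ of $Q_1$ satisfies $\sigma(u)=u$ (using symmetry of $Q_1,Q_2$) and therefore acts as the identity on $k[u]$; so $\alpha\sigma(\alpha)=\lambda$ collapses to $\alpha^2=\lambda$, while the sign condition $\det\alpha = \lambda^{n+1}$ becomes $N_{k[u]/k}(\alpha) = (\alpha^2)^{n+1}$. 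Functorially this identifies $\Sim^+_Q$ with the subgroup scheme of $R_{k[u]/k}(\Gm)$ cut out by these two conditions.

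To finish, I would check that $\Sim^+_Q = \Gm \cdot R^1_{k[u]/k}(\mu_2)$ inside $R_{k[u]/k}(\Gm)$, with intersection the diagonal $\mu_2$. Indeed, fppf-locally any such $\alpha$ is $\mu\epsilon$ with $\mu^2=\lambda$ and $\epsilon=\alpha/\mu \in \mu_2(k[u])$ of norm $\lambda^{n+1}/\mu^{2n+2}=1$; conversely, any $\epsilon \in R^1_{k[u]/k}(\mu_2)$ evidently lies in $\Sim^+_Q$. Quotienting by $\Gm$ then yields the desired isomorphism $\AAut^+_Q \simeq R^1_{k[u]/k}(\mu_2)/\mu_2$, whose bijectivity on $k^s$-points is visible in the simultaneous orthogonal eigenbasis from Proposition~\ref{aut+}, and whose Galois-equivariance is automatic because the whole construction is intrinsic to $u$. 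The one technical obstacle is that $\mu=\sqrt{\lambda}$ does not exist in $k$ in general, which is handled by realizing the above description of $\Sim^+_Q$ and the comparison map as morphisms of fppf sheaves rather than on naive $k$-points.
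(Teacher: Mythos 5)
Your proof is correct and follows essentially the same route as the paper: separability of the characteristic polynomial gives the \'etale claim, commutation with $u$ plus the centralizer argument places $\Sim^+_Q$ inside $k[u]^*$, triviality of the adjoint involution of $Q_1$ on $k[u]$ turns the similitude condition into $\alpha^2=\lambda$ and the sign condition into $N_{k[u]/k}(\alpha)=\lambda^{n+1}$, and the $\Gm$-quotient then yields $R^1_{k[u]/k}(\mu_2)/\mu_2$. The only (cosmetic) difference is in the last step, where the paper passes through the Kummer-type exact sequence $1\to \Sim_Q/\Gm\to R_{k[u]/k}(\Gm)/\Gm \to R_{k[u]/k}(\Gm)/\Gm\to 1$ and a norm-compatibility square, while you exhibit $\Sim^+_Q$ directly as $\Gm\cdot R^1_{k[u]/k}(\mu_2)$ with intersection $\mu_2$; your justification of the centralizer claim via a cyclic vector is if anything more explicit than the paper's.
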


\begin{proof} Since $f(t)$ is the characteristic polynomial  of $u$  up to a non-zero scalar
 and $f(t)$    is a product of distinct linear factors over $k^s$, $k[u]$ is an \'etale algebra over $k$ of degree $2n+2$. 
 
 Let $\tau_1$  denote the adjoint  involution on $\End(V)$  with respect to $Q_1$.
 Then $\tau_1$ is identity on $k[u]$. 
Let $\alpha \in \Sim_Q(k)$. Then $\alpha$ commutes with $u$ and $\alpha \tau_1(\alpha) = \alpha^2 = \lambda$. Since $k[u]$ is a maximal commutative subalgebra 
 of $\End(V)$, $\alpha \in k[u]$. Thus 
 $$ \Sim_Q(k) = \{ \alpha \in k[u]^* \mid \alpha^2 \in k\}$$
and 
 $$ \Sim^+_Q(k) = \{ \alpha \in k[u]^* \mid \alpha^2  = \lambda \in k, N_{k[u]/k}(\alpha) = \lambda^{n+1}\}.$$
 We have exact sequences of group schemes 
 $$ 1 \to \Sim_Q/\Gm \to R_{k[u]/k}(\Gm)/\Gm \buildrel{2}\over{ \to} R_{k[u]/k}(\Gm)/\Gm \to 1 .$$
 This exact sequence yields an isomorphism 
 $$R_{k[u]/k}(\mu_2)/\mu_2 \simeq \Sim_Q/\Gm.$$
 
 The homomorphism $\theta:  \Sim_Q/\Gm \to \mu_2$ given by $\theta(\bar{\alpha}) =
 \det(\alpha)\mu(\alpha)^{-n-1}$ makes the following diagram commute 
 $$
 \begin{array}{ccc}
 R_{k[u]/k}(\mu_2)/\mu_2  & \to &  \Sim_Q/\Gm \\
 \Big\downarrow N_{k[u]/k} & &  \Big\downarrow \theta \\
 \mu_1 \hskip 12mm & = & \mu_2 ~~
 
 \end{array}
 $$
 Hence we have an isomorphism 
 $R^1_{k[u]/k}(\mu_2)/\mu_2 \simeq  \Sim^+_Q/\Gm = \AAut^+_Q$.
\end{proof}

Let $C$ be the smooth projective  hyperelliptic curve given  by the affine equation $y^2 = f(t)$
 and $\theta : C \to \P^1$ be the corresponding fibration.
 
 \begin{prop} 
 \label{autpic}There is a natural isomorphism  $ \theta_Q:  \Pictwo C \simeq \AAut^+_Q$.
 \end{prop}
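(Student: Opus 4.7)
\textbf{Proof plan for Proposition \ref{autpic}.} The strategy is to show that both ${}_2\mathrm{Pic}\, C$ and $\mathbf{Aut}^+_Q$ are finite \'etale $k$-group schemes that, as $\Gamma_k$-modules on geometric points, admit the \emph{same} natural description in terms of the branch locus $\{\lambda_1,\ldots,\lambda_{2n+2}\}$ of the degree-two map $\theta\colon C\to\mathbb{P}^1$, and to use this common description to exhibit a canonical isomorphism $\theta_Q$.

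First I would use the simultaneous orthogonal basis $\{v_1,\ldots,v_{2n+2}\}$ produced in the proof of Proposition~\ref{reed}, with $Q_1 = I$ and $Q_2 = \mathrm{diag}(\lambda_1,\ldots,\lambda_{2n+2})$. As recorded there, this gives a $\Gamma_k$-equivariant identification
\[
\mathbf{Aut}^+_Q(k^s)\;\simeq\;\bigl\{\epsilon\in\{\pm1\}^{2n+2}:\textstyle\prod_i\epsilon_i=1\bigr\}/\{\pm 1\},
\]
where $\Gamma_k$ permutes the coordinates through its action on $\{\lambda_1,\ldots,\lambda_{2n+2}\}$. I would then identify this quotient with the group of even-cardinality subsets $S\subset\{1,\ldots,2n+2\}$ modulo $S\sim S^c$, via $\epsilon_i=-1\iff i\in S$.

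Next, on the curve side, let $P_i=(\lambda_i,0)\in C(k^s)$ be the Weierstrass points. From $\mathrm{div}(t-\lambda_i)=2P_i - D_\infty$ (where $D_\infty$ is the divisor over $\infty$, independent of $i$) and $\mathrm{div}(y)=\sum_i P_i-(n+1)D_\infty$, each even-weight subset $S$ determines a class
\[
c_S \;=\; \Bigl[\sum_{i\in S}P_i-\tfrac{|S|}{2}D_\infty\Bigr]\in\mathrm{Pic}^0(C_{k^s}),
\]
which is $2$-torsion, depends only on $S$ modulo $S^c$ (the relation $\mathrm{div}(y)$), and exhausts ${}_2\mathrm{Pic}\, C(k^s)=(\mathbb{Z}/2)^{2n}$. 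The $\Gamma_k$-action on these classes is again by permutation of the indices through $\{\lambda_i\}$. The rule $c_S\leftrightarrow\epsilon(S)$ is therefore a $\Gamma_k$-equivariant group isomorphism, and by \'etale descent it comes from a unique isomorphism of $k$-group schemes, which I take as $\theta_Q$. (As a useful cross-check, one can match this with Tignol's description: since $u=Q_2Q_1^{-1}$ has separable characteristic polynomial proportional to $f(t)$, $k[u]\simeq L:=k[t]/f(t)$, so $\mathbf{Aut}^+_Q\simeq R^1_{L/k}(\mu_2)/\mu_2$, which is exactly Mumford's classical presentation of ${}_2\mathrm{Pic}\, C$ for a hyperelliptic curve $y^2=f(t)$ with $f$ separable.)

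The main obstacle is ensuring that the two ``even-subsets modulo complementation'' descriptions really are tautologically matched: one must verify that the relation $\mathrm{div}(y)=\sum P_i-(n+1)D_\infty$ on the Picard side corresponds precisely to the determinant relation $\prod_i\epsilon_i=1$ modulo the center $\{\pm I\}$ on the similitude side. A minor technical wrinkle is the behavior at infinity, according as the leading coefficient of $f$ is a square in $k$ (two rational points $\infty_\pm$) or not (one point of degree $2$); in either case $D_\infty$ and $|S|/2\cdot D_\infty$ are $\Gamma_k$-stable, so the descent step is unaffected. Once this bookkeeping is in place, both sides are finite \'etale $k$-group schemes of order $2^{2n}$ with matching $\Gamma_k$-action, so $\theta_Q$ is the required natural isomorphism.
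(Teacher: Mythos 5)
Your proposal is correct and follows essentially the same route as the paper: both identify $\AAut^+_Q(k^s)$ with sign vectors modulo $\pm 1$ via the simultaneous diagonalizing basis from Proposition~\ref{reed}, match this with the standard Weierstrass-point description of $\Pictwo C(k^s)$ (your even-subsets-mod-complementation picture is equivalent to the paper's generators $[P_{2n+2}-P_i]$ with the single relation, since $2P_i\sim D_\infty$), and then descend by Galois equivariance. The only presentational difference is that the paper justifies equivariance by checking that $\theta_Q$ is independent of the choice of diagonalizing basis (any two differ by signs $w_j=\epsilon_j v_j$), which is the precise form of your claim that $\Gamma_k$ acts by permuting coordinates.
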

 
 \begin{proof} We shall define this  map $\theta_Q$ over $k^s$ and prove that it descends to an isomorphism over $k$. 
 Let $\lambda_1, \cdots , \lambda_{2n+2}$ be the distinct zeroes of $f(t)$  in $k^s$.  Over $k^s$, we choose a basis 
  $\{ v_1, \cdots , v_{2n+2} \}$  of $V$  for which    $Q_1$ is represented by the identity matrix  and  $Q_2$  represented by 
   the diagonal matrix with diagonal entries 
 $\lambda_1, \cdots , \lambda_{2n+2}$ (cf. proof of \ref{reed}). 
 The points $\{\lambda_1, \cdots , \lambda_{2n+2} \} \in \A^1_{k^s}$ corresponds to the ramification
  points of $\theta : C \to \P^1$ over $k^s$. 
 Let   $P_1, \cdots , P_{2n+2} \in C(k^s)$   mapping onto the points 
 $\lambda_1, \cdots , \lambda_{2n+2} $ in $\A^1$ under $\theta$.
   Then $\Pictwo C$ is a 
 free ${\mathbb Z}/2{\mathbb Z}$-module generated  by  the divisor classes $\{  [P_{2n+2} - P_i]
  \mid 1 \leq i \leq 2n+1\}$ with exactly one 
 relation $\sum_1^{2n+1} [P_{2n+2} - P_i] = 0$. 
 Thus $\Pictwo C_{k^s} \simeq ({\mathbb Z}/2{\mathbb Z})^{2n}$. 
 Let  $ \theta_Q : \Pictwo C_{k^s} \to  \AAut_{Q_{k^s}}^+$ be the map
 $\theta_Q([P_{2n+2} - P_i]) = <\epsilon_1, \cdots , \epsilon_{2n+2}> $ modulo $\mu_{2n+2}$, with $\epsilon_{2n+2} = \epsilon_i = -1$ and 
 $\epsilon_j = 1$ for $j \neq i, 2n+2$. Then $\theta_Q$ is an isomorphism of groups. 
   This map is independent of the choice of basis $\{v_1, \cdots v_{2n+2} \}$ of $V$ over $k_s$.
   In fact any such choice of a basis $\{ w_1, \cdots , w_{2n+2} \}$ of $V$ with $Q_1$ represented by the identity matrix and 
   $Q_2$ is  represented by the diagonal matrix $<\lambda_1, \cdots , \lambda_{2n+2}>$ 
   necessarily is given by $w_j = \epsilon_j v_j$, $\epsilon_j = \pm 1$, 
   $1 \leq j \leq 2n+2$. Hence  $\theta_Q([P_{2n+2}  - P_i])$ is well defined, commutes with 
   the Galois action and descends to a $k$-isomorphism
  $ \theta_Q:  \Pictwo C \simeq \AAut^+_Q$.  
  \end{proof}
 
 \section{Twisting of Pencils} 
 \label{twisting-pencils}
Let $k$ be a field  of characteristic not 2.
Let $Q$ be a nonsingular  quadratic form of dimension $2n + 2$ over $k$ represented 
by a symmetric matrix $q \in \GL_{2n+2}(k)$. Let 
$\Gamma_k$ be the Galois group $k^s/k$ and 
$\gamma : \Gamma_k\to \PSOrth(q)(k^s)$ be a 1-cocycle. Then the twisted quadric $q^{\gamma}$ 
as  a variety over $k$ is defined 
by Tao (\cite{T}).  Let $A$ be a central simple algebra over $k$ of degree $2n+ 2$ with an orthogonal
 involution $\tau$ associated to the cocycle
$[\gamma] \in H^1(k, \PSOrth(q))$ (\cite[p. 409]{KMRT}). The {\it twisted quadric} $q^\gamma$ 
is the variety of $(2n+2)$-dimensional  totally isotropic 
ideals in $(A, \tau)$. If $A = M_{2n+2}(k)$ and $\tau$ is adjoint to a quadratic form $\tilde{q}$ 
of dimension $2n+2$, the twisted quadric $q^\gamma$ is indeed the 
quadric defined by $\tilde{q}$. 

Let $Q_1$  and $Q_2$ be  two  nonsingular quadratic forms defined on a vector space $V$ of  dimension  $2n+2$.  
Let $Q  = tQ_1 - Q_2$ be the associated pencil in $\P(V)$. We assume that $Q$ is nonsingular pencil.
For a choice of a basis of $V$, we assume that 
$Q_1$ and $Q_2$ are symmetric matrices.
Given a cocycle 
$\alpha  : \Gamma_k\to \AAut^+(Q)$, we shall describe the twisted pencil $Q^\alpha$. 
 For $\sigma \in \Gamma_k$,  let $\alpha_\sigma \in \SL_{2n+2}(k^s)$ such that 
$\alpha(\sigma) = \alpha_\sigma$ modulo $\mu_{2n+2}$.  We have 
$\alpha_\sigma Q_1 \alpha_\sigma^t = \lambda_\sigma Q_1$ and 
$\alpha_\sigma Q_2 \alpha_\sigma^t = \lambda_\sigma Q_2$ with $\lambda_\sigma \in k^{s*}$ and 
$\lambda_\sigma^{n+1} = 1$.  For $\sigma, \tau \in \Gamma_k$,
 let $\epsilon_{\sigma,\tau} = \alpha_\sigma \sigma(\alpha_\tau) (\alpha_{\sigma\tau})^{-1}  \in \mu_{2n+2}$. 
Then $\epsilon_\alpha : \Gamma_k \to \mu_{2n+2} \subset \GL_{2n+2}$ given by $\epsilon_\alpha(\sigma, \tau) = \epsilon_{\sigma, \tau}$ 
 is a 2-cocycle which defines a central simple algebra $A$ over $k$ of degree $2n+2$.

\noindent{\bf Case 1:}  Suppose $[\epsilon_\alpha] \in H^2(k, \mu_{2n+2})$ is zero.  Then  $A = M_{2n+2}(k)$.  There exists 
a function $g : \Gamma_k \to \mu_{2n+2}$ such that $\epsilon_\alpha(\sigma, \tau) = g(\sigma)~\! ^\sigma g(\tau) g(\sigma\tau)^{-1}$. 
Then $g(\sigma)^{-1}\alpha_\sigma$ is a 1-cocycle with values in $\Sim(Q_1) \cap \Sim(Q_2) $.
We have 
$$ (g(\sigma)^{-1} \alpha_\sigma) Q_1 (g(\sigma)^{-1}\alpha_\sigma)^t = g(\sigma)^{-2} \lambda_\sigma Q_1$$
and
$$ (g(\sigma)^{-1} \alpha_\sigma) Q_2 (g(\sigma)^{-1}\alpha_\sigma)^t = g(\sigma)^{-2} \lambda_\sigma Q_2$$
with $g(\sigma)^{-2}\lambda_\sigma \in \mu_{2n+2}$ is a 1-cocycle. 
Since $H^1(k, G_m)$ is zero,
 there is  $\theta \in k^{s*}$ such that $g(\sigma)^{-2} \lambda_\sigma 
= \theta^{-1} \sigma(\theta)$ for $\sigma \in \Gamma_k$. 
Let $\theta = \delta^2$  with $\delta \in k^{s*}$. Then 
$(\delta^{-1} \sigma(\delta))^{2n+2} = (\theta^{-1} \sigma(\theta))^{n+1} = 1$.
The map $\Gamma_k \to \SL_{2n+2}(k^s)$ given by $\sigma \mapsto (\delta\sigma(\delta)^{-1})
 g(\sigma)^{-1} \alpha_\sigma$ is a 1-cocycle with values in 
$\SOrth(Q_1) \cap \SOrth(Q_2)$. This cocycle yields descents $\tilde{Q}_1$ and $\tilde{Q}_2$ 
of the quadratic forms $Q_1$ and $Q_2$ with 
det$(\tilde{Q}_1) =$ det$(Q_1)$ and det$(\tilde{Q}_2) =$ det$(Q_2)$. Let $\tilde{Q} = t\tilde{Q}_1 - \tilde{Q}_2$.

There is an isomorphism 
$$\psi : Q_{k^s} \to \tilde{Q}_{k^s}$$
such that the associated cocycle $\Gamma_k \to \AAut^+_Q(k^s)$ defined by $\sigma \mapsto \psi^{-1}
 \sigma\psi \sigma^{-1}$ is precisely 
$(\delta \sigma(\delta))^{-1} g(\sigma)^{-1} \alpha_\sigma$. Here $\sigma$ stands for the 
Galois action on $Q_{k^s}$ as well $\tilde{Q}_{k^s}$. 
We call $t\tilde{Q}_1 - \tilde{Q}_2$ the {\it twist} of $tQ_1 - Q_2$ with respect to the cocycle $\alpha$. 

\vskip 2mm

\noindent{\bf Case 2:}  Suppose $[\epsilon_\alpha] \in H^2(k, \mu_{2n+2})$ is  non-zero.
The map $\Gamma_k \to \PSOrth(Q_1)$, $\sigma \mapsto $int$(\alpha_\sigma)$ 
is a 1-cocycle. This yields a descent $(A, \tau_1)$  of $(M_{2n+2}(k^s), \tau_{Q_1})$ where 
$A$ is a central simple algebra of degree $2n+2$ and $\tau_1$ an orthogonal involution on $A$. Here 
$\tau_{Q_1}$ is the adjoint involution $X \mapsto Q_1X^tQ_1^{-1}$ on the matrix algebra.
We note that $A$ is the  descent of the cocycle $\alpha_\sigma \in \PGL_{2n+2}(k^s)$. 
There is an isomorphism of algebras with involutions 
$$\phi : (M_{2n+2}(k^s), \tau_{Q_1})_{k^s}  \to  (A, \tau_1)_{k^s}$$
such that int$(\alpha_\sigma) = \phi^{-1} \sigma \phi \sigma^{-1}$.

   Since 
$$\tau_{Q_1}(Q_2Q_1^{-1}) = Q_1 (Q_2Q_1^{-1})^t  Q_1^{-1} = Q_2Q_1^{-1},$$
$Q_2Q_1^{-1}$ is a symmetric element in $(M_{2n+2}(k), \tau_{Q_1})$.   
For $\sigma \in \Gamma_k$,  we prove that $\sigma\phi(Q_2Q_1^{-1}) = \phi(Q_2Q_1^{-1})$ thereby proving that 
$\phi(Q_2Q_1^{-1}) \in A$.  We have 
$$
\begin{array}{rcl}
\sigma(\phi(Q_2Q_1^{-1})) & = & \phi ~int(\alpha_\sigma) \sigma(Q_2Q_1^{-1}) \\
& = &  \phi ((\alpha_\sigma Q_2 \alpha_\sigma^t) ((\alpha_\sigma^t)^{-1}Q_1^{-1} \alpha_\sigma^{-1})) \\
& = & \phi(\lambda_\sigma Q_2) \lambda_\sigma^{-1} Q_1^{-1}) \\
& = & \phi(Q_2Q_1^{-1}). 
\end{array}
$$
Thus $u = \phi(Q_2Q_1^{-1})$ is a symmetric element in $(A, \tau_1)$.  The pencil  $t - u$ on $(A, \tau_1) \otimes_kk[t]$ 
is called the {\it twisted} pencil associated to $\alpha$.

Suppose $A = M_{2n+2}(k)$ and  $\tau_1 = \tau_{\tilde{Q}_1}$. Let $\tilde{Q} = t\tilde{Q}_1 - \tilde{Q}_2$ be the descent of 
$tQ_1 - Q_2 = Q$ and 
$\psi : Q_{k^s} \to \tilde{Q}_{k^s}$ an isomorphism 
satisfying  $\psi^{-1} \sigma \psi \sigma^{-1} = \alpha_\sigma \in \PSOrth(Q_1) \cap \PSOrth(Q_2)$.
Then 
$$
int(\psi) = \phi : (M_{2n+2}(k^s), \tau_{Q_1})  \to (A, \tau_{\tilde{Q}_1})_{k^s}
$$
is an isomorphism  with 
$$\phi^{-1} \sigma \phi \sigma^{-1}  = int(\alpha_\sigma).$$
It is easy to  verify that 
$$\phi(Q_2Q_1^{-1}) = \tilde{Q}_2\tilde{Q}_1^{-1} = u$$
and the pencil $t - u$  over $(M_{2n+2}(k), \tau_{\tilde{Q}_1})$ corresponds under Morita 
equivalence to the pencil $(t-u)(\tilde{Q}_1) = t\tilde{Q}_1 - \tilde{Q}_2$ over $k$. 

\begin{defin} The {\it Grassmannian}  $I_s(Q^\alpha)$ of $s$-dimensional zero subspaces of the twisted pencil $Q^\alpha$ is the 
Grassmannian of $s(2n+2)$-dimensional totally isotropic ideals in $A$ for both the involutions $\tau_1$
and $Int(u) \circ \tau_1$.  
\end{defin}

If $A$ is split and $Q^\alpha = t\tilde{Q}_1 - \tilde{Q}_2$, $I_1(Q^\alpha)$ is simply the base locus $X$ of the
intersection of $\tilde{Q}_1$ and $\tilde{Q}_2$.

\begin{prop} 
\label{trivial-twist}
Let $[\alpha_\sigma] \in H^1(k,  \AAut^+(Q))$ and $Q^\alpha$ the twisted pencil associated to $[\alpha_\sigma]$.
Then $Q^\alpha$ is a pencil of quadrics if and only if  the image of $[\alpha_\sigma]$ in $H^1(k, \PGL_{2n+2})$ under the 
composite $\AAut^+(Q) \to \PSOrth(Q_1) \to \PGL_{2n+2}$ is zero.  Further if $Q^\alpha$ is a pencil of quadrics, then
disc$(Q^\alpha) = $ disc$(Q)$. 
\end{prop}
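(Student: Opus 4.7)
The strategy is to identify, from the image of $[\alpha_\sigma]$ in $H^1(k,\PGL_{2n+2})$, precisely the central simple algebra $A$ appearing in the twisted pencil construction of Case 2 above, and then to invoke the Morita-theoretic description at the end of that discussion to recognise when $Q^\alpha$ is an honest pencil of quadrics.

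First I would observe that, by construction, the algebra $A$ is the descent of $M_{2n+2}(k^s)$ along the cocycle $\mathrm{int}(\alpha_\sigma) \in \PGL_{2n+2}(k^s)$; its isomorphism class is therefore exactly the composite image of $[\alpha_\sigma]$ in $H^1(k,\PGL_{2n+2})$. If this image is trivial, then $A \simeq M_{2n+2}(k)$, the involution $\tau_1$ is adjoint to some regular quadratic form $\tilde Q_1$ on $k^{2n+2}$, the symmetric element $u \in (A,\tau_1)$ yields a symmetric matrix $\tilde Q_2 := \tilde Q_1 u$, and the Morita identification spelled out in the paragraph following Case 2 turns the pencil $t-u$ over $(A,\tau_1)$ into the genuine pencil $t\tilde Q_1 - \tilde Q_2$ of quadrics. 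Conversely, if $Q^\alpha$ is a pencil of quadrics on a $k$-vector space $\tilde V$ of dimension $2n+2$, then its underlying algebra is $\End(\tilde V) \simeq M_{2n+2}(k)$, so $A$ is split and the image in $H^1(k,\PGL_{2n+2})$ vanishes. This proves the equivalence.

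For the discriminant equality, assume $Q^\alpha = t\tilde Q_1 - \tilde Q_2$. The isomorphism $\phi : (M_{2n+2}(k^s),\tau_{Q_1}) \to (A,\tau_1)_{k^s}$ is an isomorphism of algebras, and so preserves characteristic polynomials. Applied to $Q_2 Q_1^{-1}$, whose image is $\tilde Q_2 \tilde Q_1^{-1}$, it gives $\det(tI - Q_2 Q_1^{-1}) = \det(tI - \tilde Q_2 \tilde Q_1^{-1})$ in $k[t]$. Multiplying through by $\det(Q_1)$ and $\det(\tilde Q_1)$ respectively converts this to
\[
\det(\tilde Q_1)^{-1}\det(t\tilde Q_1 - \tilde Q_2) = \det(Q_1)^{-1}\det(tQ_1 - Q_2).
\]
Since the trivialisation of $A$ may be carried out via the explicit descent in Case 1, which factors through $\SOrth(Q_1) \cap \SOrth(Q_2)$, the forms $\tilde Q_1$ and $Q_1$ are related by a cocycle with values in $\SOrth(Q_1)$, and consequently have the same discriminant modulo squares, i.e.\ $\det(\tilde Q_1) \equiv \det(Q_1) \pmod{k^{*2}}$. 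The identity displayed above then yields $\disc(Q^\alpha) = \disc(Q)$ in $k[t]/k^{*2}$.

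The main obstacle is the last step, where one must argue that $\det(\tilde Q_1) \equiv \det(Q_1) \pmod{k^{*2}}$ from the fact that the twisting cocycle factors through the special orthogonal group. This is the only place where the hypothesis that $\alpha_\sigma$ lies in $\AAut^+_Q$ (and not merely in $\AAut_Q$) is genuinely used: the $+$ condition $\lambda_\sigma^{n+1} = \det(\alpha_\sigma) = 1$ is precisely what allows one to rescale so as to land in $\SOrth$ of each form, which in turn pins down the discriminant modulo squares.
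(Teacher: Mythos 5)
Your proposal is correct and follows essentially the same route as the paper's (very terse) proof: the twisted pencil lives on the algebra $A$ classified by the composite image in $H^1(k,\PGL_{2n+2})$, the pencil $t-u$ on $(A,\tau_1)$ is a pencil of quadrics precisely when $A$ is split, and the discriminant statement comes from the $+$ condition, which is what permits the rescaling of the descent cocycle into $\SOrth(Q_1)\cap\SOrth(Q_2)$ as in Case 1. You merely supply the details the paper leaves implicit (the characteristic-polynomial comparison and the fact that twisting by an $\SOrth$-valued cocycle preserves the determinant modulo squares); the only cosmetic slip is writing $\tilde Q_2=\tilde Q_1 u$ rather than $u\tilde Q_1$, which is the symmetric matrix under the paper's convention $\tau_{Q_1}(X)=Q_1X^tQ_1^{-1}$, and this does not affect the determinant computation.
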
 

\begin{proof}
The only remark is that for a symmetric unit $u$ in the algebra  with involution 
$(A, \tau)$ over $k$,
the pencil $(t - u)$ on the central simple algebra $A \otimes k[t]$ with the involution $\tau \otimes 1$ is a 
pencil of quadrics if and only if $A$ is split.  Since the cocycle $\alpha_\sigma$ has values in $\AAut^+(Q)$, it follows that 
disc$(Q^\alpha) = $ disc$(Q)$.
\end{proof}

\section{Twisted moduli spaces on hyperelliptic curves}
\label{twisting-moduli-hyperell}
Let $C$ be a smooth projective geometrically integral 
hyperelliptic curve of genus $g \geq 2$ 
defined over a field $k$ of characteristic not 2 and   $\theta: C \to {\mathbb P}^1$ be the hyperelliptic
covering.  We assume that  there is a rational point  $P \in  C(k)$ unramified for $\theta$.
 Let $Q = \theta(P)  \in \P^1(k)$.
Let  $\eta = {\OO}((2g-1)P)$ and   $M_2(C,\eta)$   the moduli space of  rank two stable locally free sheaves with  
 determinant $\eta$ on $C$. 
 We recall a construction of  Ramanan (\S \ref{Appendix}) identifying the moduli space $M_C(2, \eta)$ of rank two 
 stable locally free sheaves of determinant  $\eta$ with a certain quadratic Grassmannian. The above construction 
 over an algebraically closed field goes back to (\cite{De-Ra}). 
 
 Let $D$ be the divisor on ${\mathbb P}^1$ of degree $2g + 2$ with support the ramification points of $\theta$ and 
 $\tilde{D}$ the divisor on $C$ of degree $2g+2$ mapping isomorphically onto $D$. Let 
 $s$ be a general section in $H^0(\P^1, D)$. Restricted to $\A^1 = \P^1 -\{ Q\}$, $s$ is given by 
 a polynomial $f(t)$ of degree $2g + 2$. Further, $Q$ being a rational point of $\P^1$ which splits into two rational 
 points $P$ and $P'$ in $C(k)$, leading coefficient of $f(t)$ is a square which we assume to be one after scaling $s$.
 The affine equation of $\theta$ restricted to $\A^1 = \P^1 - \{ Q\}$ is
 given by $y^2 = f(t)$ with $f(t)$ monic and has distinct zeroes $\lambda_1, \cdots , \lambda_{2n+2} $ in $k^s$ corresponding to 
 the support of $D$ over $k^s$.

 Let $\eta  = {\OO}((2g - 1)P)$.   Then  $V =  H^0(\tilde{D}, \eta\mid_{\tilde{D}})$ is a vector space 
 of dimension $2g + 2$. Associated to $\eta$, there is a nonsingular pencil  
 $Q_\eta$ in $V$ (\ref{pencil-eta}) 
 with disc$(Q_\eta) = \lambda f(t)$ for some scalar $\lambda$.  The pencil $Q_\eta$ at  the point $Q$ is split  so that 
 disc$(Q_\eta)_{t = \infty} = 1$.  Thus $\lambda$ is a square and after scaling we assume disc$(Q_\eta) = f(t)$ with 
 $f(t)$ monic.

Let   $M_C(2,\eta)$  be   the moduli space of  rank two stable locally free sheaves with  
 determinant $\eta$ on $C$. Let $I_{g-1}(Q_\eta)$ be the Grassmannian of $(g-1)$ dimensional 
 linear subspaces contained in the base locus $X_\eta$ of $Q_\eta$. 
 Then an isomorphism 
 
\hspace*{\fill}
 $ \phi_\eta : M_C(2, \eta) \to I_{g-1}(Q_\eta)$   \hfill   $(\star \star \star) $
\hspace*{\fill}

\noindent
 over $k$ is constructed in (\ref{app-main}).
 In fact this isomorphism commutes with the action of $\Pictwo(C)$ on both sides (\ref{actions}).   
 We shall now discuss a twisted version of this isomorphism.

Let $\alpha \in H^2(C, \mu_2)$ be a $\mu_2$-gerbe. Let $M_C^\alpha(2, \eta)$ be the  moduli 
space of $\alpha$-twisted rank 2 stable locally free sheaves with determinant $\eta$  over $C$.
 Let  $\beta : \Gamma_k  \to \twoPic C(k^s)$ be a 1-cocycle with cocycle class 
 $[\beta]$ being the image of $\alpha$ under projection in $(\star \star)$ of \S 3. 
 By  \S\ref{moduli-stacks}, $M_C^\alpha(2, \eta)$ is the Galois twist of $M_C(2, \eta)^\beta $ of the moduli space
 $M_C(2, \eta)$ by the cocycle $\beta$.
 
 On the other hand,  the isomorphism 
 $ \theta_{Q_\eta} : \twoPic (C) \to \AAut^+(Q_\eta)$ defined in (\ref{autpic})  gives a cocycle 
 $\theta_{Q_\eta} \circ \beta$ with values in $\AAut^+(Q_\eta)$. We denote by $Q_\eta^\alpha$ the twisted 
 pencil associated to the 1-cocycle $\theta_{Q_\eta}\circ \beta$. Then 
 $I_{g-1}(Q_\eta^{\alpha})$ is simply a descent  of $I_{g-1}(Q_\eta)$ with respect to $\theta_{Q_\eta}\circ \beta$; i.e.
 $$
 I_{g-1}(Q_\eta^\alpha) = I_{g-1}(Q_\eta)^\beta.$$
Twisting both sides of $(\star \star \star)$ by $\alpha$, we get a $k$-isomorphism  
$$\phi_\eta^\alpha :  M_C^\alpha(2, \eta) \to I_{g-1}((Q_\eta)^\alpha).$$

We record this in the following 

\begin{theorem}
\label{phi-eta} Let $C$ be a smooth projective geometrically integral 
hyperelliptic curve of genus $g \geq 2$  defined over a field $k$ of characteristic not 2 and   $\theta: C \to \P^1$ be a
double cover.  Suppose that  there exists $P \in  C(k)$ unramified for $\theta$.
 Let  $\eta = {\OO}((2g-1)P)$ and   $\alpha \in H^2(C, \mu_2)$  a $\mu_2$-gerbe. 
 Then there is a $k$-isomorphism 
$$\phi_\eta^\alpha :  M_C^\alpha(2, \eta) \to I_{g-1}(Q_\eta^\alpha)$$
which descends the isomorphism 
$\phi_\eta  :  M_C (2, \eta) \to I_{g-1}(Q_\eta)$. 
\end{theorem}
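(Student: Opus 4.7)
The plan is to obtain $\phi_\eta^\alpha$ from the untwisted isomorphism $\phi_\eta$ of $(\star\star\star)$ by Galois descent along the cocycle $\beta : \Gamma_k \to \twoPic C$. The framework for this descent is already in place on both sides of the statement: on the moduli side, \S\ref{moduli-stacks} identifies $M_C^\alpha(2,\eta)$ with the Galois twist $M_C(2,\eta)^\beta$ for the natural tensoring action of $\twoPic C$; on the Grassmannian side, the twisted pencil $Q_\eta^\alpha$ was constructed in \S\ref{twisting-pencils} precisely so that $I_{g-1}(Q_\eta^\alpha) = I_{g-1}(Q_\eta)^{\theta_{Q_\eta}\circ\beta}$, where $\theta_{Q_\eta}$ is the isomorphism $\twoPic C \isom \AAut^+(Q_\eta)$ of \ref{autpic} and $\AAut^+(Q_\eta)$ acts on $I_{g-1}(Q_\eta)$ by its natural action on linear subspaces of $\P(V)$.

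Given this set-up, the theorem reduces to checking that the $k^s$-isomorphism $\phi_\eta$ is equivariant for the natural $\twoPic C$-actions on source and target, where on the right-hand side the action is transported from $\AAut^+(Q_\eta)$ via $\theta_{Q_\eta}$. Precisely, for a $2$-torsion class $N \in \twoPic C$ one has to verify that $\phi_\eta(E \otimes N) = \theta_{Q_\eta}(N) \cdot \phi_\eta(E)$ for every $E \in M_C(2,\eta)(k^s)$. Once this compatibility is in hand, standard Galois descent applied fibrewise to both sides produces a $k$-isomorphism $M_C(2,\eta)^\beta \to I_{g-1}(Q_\eta)^\beta$ which, under the identifications above, is the map $\phi_\eta^\alpha$ we seek, and it manifestly reduces to $\phi_\eta$ after base change to $k^s$.

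The substantive content is therefore the equivariance of $\phi_\eta$: this is the compatibility referred to as \ref{actions} in the appendix, and is the main obstacle. One traces through Ramanan's construction of $\phi_\eta$ over $k^s$ (cf.~\ref{app-main}): the orthogonal basis $v_1,\dots,v_{2g+2}$ of $V$ simultaneously diagonalising the two members of the pencil $Q_\eta$ is pinned down by the Weierstrass points $P_1,\dots,P_{2g+2}$ lifting the zeroes of the discriminant, and tensoring by the class $[P_{2g+2}-P_i]$ is checked to correspond to the involution of $V$ sending $v_i\mapsto -v_i$, $v_{2g+2}\mapsto -v_{2g+2}$ and fixing the remaining basis vectors. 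This is exactly the recipe used in \ref{autpic} to define $\theta_{Q_\eta}$, so once this bookkeeping is carried out the equivariance follows and the descent argument proceeds formally.
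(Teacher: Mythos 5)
Your proposal follows the paper's own route essentially verbatim: both identify $M_C^\alpha(2,\eta)$ with the Galois twist $M_C(2,\eta)^\beta$ via \S\ref{moduli-stacks}, identify $I_{g-1}(Q_\eta^\alpha)$ with $I_{g-1}(Q_\eta)^\beta$ via the construction of the twisted pencil and $\theta_{Q_\eta}$, and then descend $\phi_\eta$ using its $\Pictwo C$-equivariance, which is exactly Proposition \ref{actions} of the appendix. The bookkeeping you sketch for the equivariance (tensoring by $[P_{2g+2}-P_i]$ corresponding to the sign involution on the distinguished basis) is the content of the appendix's proof of \ref{actions}, so nothing further is needed.
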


\section{Soluble pencils}
\label{soluble}
 
 Let $k$ be a field  of characteristic not 2.
 A  nonsingular  pencil $Q = tQ_1 - Q_2$  on a vector space $V$ of dimension $2n+2$
  is {\it soluble} if there is   subspace of $V$ of dimension $n$  on
   which both $Q_1$ and $Q_2$ are identically zero; in other words, the base locus $X_Q$ contains 
   an $n$-dimensional linear subspace.  
   
   Let $C$ be a smooth projective geometrically integral 
hyperelliptic curve of genus $g \geq 2$ 
defined over   $k$  and   $\theta: C \to \P^1$ be a
double cover.  Suppose that  there exists $P \in  C(k)$ unramified for $\theta$.
 Let $Q = \theta(P)  \in \P^1(k)$.
 Choosing $\A^1 = \P^1 - \{ Q \}$, we may write the affine equation of $C$ by 
 $$y^ 2= f(t)$$
 with $f(t)$ monic of degree $2g+2$. 
 Let  $\eta$ be the line bundle on $C$ given by $\eta =  {\OO}((2g-1)P)$. 
 Let $\tilde{D}$ the divisor on $C$ mapping isomorphically onto the divisor $D$ on $\P^1$ which is
 the ramification divisor for $\theta$.  Then $D \subset \A^1$ and  the support of $ D $ is precisely 
 the  set zeros of $f(t)$. Let $V  = H^0(\tilde{D}, \eta \mid_{\tilde{D}})$.  The dimension of  
$V$ is  $2g+2$ and there is a pencil of quadrics $Q_\eta$ in $\P(V)$ (\S\ref{Appendix}) satisfying 
 the following properties.
 
 \begin{prop}
 \label{qeta}
1) The pencil $Q_\eta$ is soluble.\\
2) Suppose $Q_\eta = tQ_\eta^1 - Q_\eta^2$. Then 
$Q^1_\eta$ is hyperbolic and disc$(Q_\eta) = f(t)$.  
  \end{prop}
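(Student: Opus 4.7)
The plan is to produce, from global sections of $\eta$ and $\eta(P)$, totally isotropic subspaces of $V$ whose dimensions witness solubility (for part 1) and hyperbolicity of $Q_\eta^1$ (for part 2); the identity $\disc(Q_\eta)=f(t)$ then follows from a comparison of zero loci together with the normalizations fixed in the paragraph preceding the proposition.

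For part (1), first I would count sections. By Riemann--Roch, since $\deg \eta = 2g-1 > 2g-2 = \deg K_C$, one has $h^0(K_C-\eta)=0$ and hence $h^0(C,\eta)=g$. Restriction to $\tilde D$ gives a map $\rho : H^0(C,\eta) \to V = H^0(\tilde D, \eta|_{\tilde D})$ with kernel $H^0(C,\eta(-\tilde D))$; since $\deg(\eta(-\tilde D)) = (2g-1)-(2g+2) = -3 < 0$, the kernel vanishes and $W := \rho(H^0(C,\eta))$ is a $g$-dimensional subspace of $V$. The key step is to verify, directly from the construction of $Q_\eta$ in the appendix~\ref{Appendix}, that both $Q_\eta^1$ and $Q_\eta^2$ vanish identically on $W$: the forms are built from a residue/trace pairing on $\tilde D$, and for $s,s' \in H^0(C,\eta)$ the product $s\cdot s'$ extends to a global section of $\eta^{\otimes 2}$, which forces the relevant trace along $\tilde D$ to vanish by a residue-theorem identity on $C$. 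Granted this, $W$ is an $n=g$-dimensional subspace of $V$ on which both quadrics in the pencil vanish, which is precisely solubility.

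For part (2), to show $Q_\eta^1$ is hyperbolic I would enlarge $W$ by one dimension. Since $P$ is unramified we have $P \notin \tilde D$, so $\eta(P)|_{\tilde D} \cong \eta|_{\tilde D}$, and restriction to $\tilde D$ yields a map $H^0(C,\eta(P)) \to V$; by Riemann--Roch $h^0(\eta(P)) = g+1$, and the map is injective because $\deg(\eta(P - \tilde D)) = -2 < 0$. Call the image $W'$. The additional section has a simple pole at $P$, which lies over $\infty = \theta(P)$; only $Q_\eta^2$ is sensitive to this pole, while $Q_\eta^1$, being the coefficient of $t$ in the pencil (the ``value at $\infty$''), continues to vanish on $W'$. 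Since $\dim W' = g+1 = \tfrac{1}{2}\dim V$ and $Q_\eta^1$ is nondegenerate, $Q_\eta^1$ is hyperbolic. Finally, for the discriminant: over $k^s$ the pencil $Q_\eta$ diagonalizes simultaneously in a basis indexed by $\tilde D$, with the images $\lambda_i = \theta(R_i)$ of the ramification points appearing as the eigenvalues of $Q_\eta^2$ relative to $Q_\eta^1$ (cf.\ the proof of Proposition~\ref{reed}); so $\disc(Q_\eta)$ and $f(t) = \prod(t-\lambda_i)$ are polynomials of the same degree with the same roots and multiplicities, and by the monic normalizations of both they coincide.

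The main obstacle is the isotropy checks in the first step of each part: verifying that $H^0(C,\eta)$ (resp.\ $H^0(C,\eta(P))$) restricts to a totally isotropic subspace for both quadrics (resp.\ for $Q_\eta^1$ alone). These rest on the concrete description of $Q_\eta$ from the Ramanan appendix and reduce ultimately to a residue identity on $C$. Everything else is Riemann--Roch bookkeeping and a leading-coefficient comparison.
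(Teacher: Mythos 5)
Your proposal is correct and follows essentially the same route as the paper: the paper's proof of part (1) simply cites the appendix construction (\S\ref{pencil-eta}), where the $g$-dimensional isotropic subspace is exactly your $\rho(H^0(C,\eta))$ (the space $C(\eta)$, with isotropy coming from the vanishing of the composite $\Gamma(C,h^{2g})\to W(h^{2g})\to H^1(C,h^{g-1})$, i.e.\ your residue identity), and for part (2) the maximal isotropic subspace of $(Q_\eta)_Q=Q^1_\eta$ is the appendix's $G_P=C(\eta\otimes\OO(P))$, which is precisely your $W'$. The only point to tighten is the last step: one has disc$(Q_\eta)=\lambda f(t)$ and must argue that $\lambda$ is a \emph{square}, which follows not from ``monic normalization'' but from the hyperbolicity of $Q^1_\eta$ you have just proved (the leading coefficient of the discriminant is disc$(Q^1_\eta)=1$ modulo squares), exactly as the paper does.
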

 
 \begin{proof}
 1) By  (\S  \ref{pencil-eta})  there is a $g$-dimensional linear subspace in $V$ on which 
 $Q_\eta^1$ and $Q_\eta^2$ vanish identically. Thus $Q_\eta$ is a soluble pencil. 
 
 \vskip 3mm
 
 \noindent
 2) By  (\S  \ref{pencil-eta})  disc$(Q_\eta) = \lambda f(t)$ for some $\lambda \in k^*$ and 
 $(Q_\eta)_Q$ is hyperbolic. Since $(Q_\eta)_Q = Q^1_\eta$, disc$(Q_\eta^1) = $ square class of 
 leading coefficient of $f(t)$ $= 1$. Thus $\lambda$ is a square  and disc$(Q_\eta) = f(t)$. 
 \end{proof}  
 
 There is another example of a soluble pencil constructed in (\cite{Wang}).
 We recall this construction. 
 Let $f(t) \in k[t]$ be a monic polynomial of degree $2n + 2$ with no multiple zeros.  
 Then  $L = k[t]/(f(t))$ is an \'etale  $k$-algebra  of dimension $2n+2$. 
 Let  $\beta \in L $  be  the class of $t$. Then
$\{ 1, \beta, \cdots , \beta^{2n+1} \}$ is a basis of $L$ over $k$.
We define a bilinear form $Q_1^0 : L \times L \to k$ by 
 $$Q_1^0(\lambda, \mu) = {\rm ~coefficient ~ of~ } \beta^{2n+1} {\rm~ in~ } \lambda\mu.$$
 Then $Q_1^0$ is hyperbolic. 
 Let $T_0 : L \to L$ denote the linear map given by multiplication by $\beta$. Then $T_0$ is a self 
 adjoint operator with respect to $Q_1^0$ and the characteristic polynomial of $T_0$   is $f(t)$.
 Let $Q_2^0 = Q_1^0T_0$.  Then $Q_0 =  tQ_1^0  -  Q_2^0$ is a nonsingular pencil with
  discriminant  $f(t)$.   Since $Q_1^0$ and $Q_2^0$ are identically zero 
on  the $n$-dimensional  subspace generated by $\{1, \beta, \cdots \beta^{n-1}\}$ of $L$, 
   $Q_0$ is a soluble pencil.

 Let $C \to \P^1$ be the smooth projective  hyperelliptic curve given  by the affine equation $y^2 = f(t)$
and   $ \theta_{Q_0}:  \Pictwo C \simeq \AAut^+_{Q_0}$ the isomorphism given in (\ref{autpic}).
Let $c : \AAut^+(Q_0) \to \PSOrth(Q_1^0)$ be the inclusion. 
 Then   
 
 \hspace*{\fill}
$ \frac{\Pic C}{2} \buildrel{\delta}\over{\to}  H^1(k, \Pictwo C) \buildrel{c\theta_{Q_0}}\over{\to}  H^1(k, \PSOrth(Q_1^0))$
 \hfill  $(\#)$
 \hspace*{\fill}
 
 \noindent
is a complex (\cite[Theorem 10]{SW}).

\begin{prop}
\label{complex}
 Let $f(t) \in k[t]$ be a monic polynomial of degree $2n + 2$ with no multiple zeros. 
Let $C \to \P^1$ be the smooth projective  hyperelliptic curve given  by the affine equation $y^2 = f(t)$. 
Let $Q = tQ_1 - Q_2$ be a soluble pencil with disc$(Q) = f(t)$. 
Let $ \theta_Q:  \Pictwo C \simeq \AAut^+_Q$ be the isomorphism given in (\ref{autpic}).
Then the following sequence 

\hspace*{\fill}
 $ \frac{\Pic C}{2} \buildrel{\delta}\over{\to}  H^1(k, \Pictwo C) \buildrel{c\theta_Q}\over{ \to} H^1(k, \PSOrth(Q_1))$ \hfill  $(\star \star \star \star)$
 \hspace*{\fill}
 
 \noindent
 is a complex. 
\end{prop}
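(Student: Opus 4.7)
The plan is to reduce the proposition to the cited Shankar--Wang complex $(\#)$ for the Wang pencil $Q_0 = tQ_1^0 - Q_2^0$ by comparing $Q$ with $Q_0$ over $k^s$. Since both $Q$ and $Q_0$ are nonsingular pencils on $(2n+2)$-dimensional vector spaces with the same square-free discriminant $f(t)$, the simultaneous diagonalization procedure in the proof of Proposition \ref{reed} produces a pencil isomorphism $\psi \colon Q_{0,k^s} \xrightarrow{\sim} Q_{k^s}$ over $k^s$. This $\psi$ induces $k^s$-isomorphisms $\psi_* \colon \AAut^+(Q_0)_{k^s} \xrightarrow{\sim} \AAut^+(Q)_{k^s}$ and $\tilde\psi \colon \PSOrth(Q_1^0)_{k^s} \xrightarrow{\sim} \PSOrth(Q_1)_{k^s}$ compatible with the inclusions $c$. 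Since the map $\theta$ of Proposition \ref{autpic} is constructed from a common simultaneously diagonalizing basis and is insensitive (modulo $\mu_{2n+2}$) to signs, the first step is to check that $\tilde\psi \circ \theta_{Q_0} = \theta_Q$ over $k^s$.

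Next, let $\alpha \colon \Gamma_k \to \AAut^+(Q_0)(k^s)$, $\sigma \mapsto \psi^{-1}\sigma(\psi)$, be the descent cocycle for $\psi$. Through $\tilde\psi$, the group scheme $\PSOrth(Q_1)$ is identified with a $k$-form of $\PSOrth(Q_1^0)$ via the cocycle $c(\alpha)$, giving a bijection of pointed sets $H^1(k, \PSOrth(Q_1)) \cong H^1\bigl(k, {}_{c(\alpha)} \PSOrth(Q_1^0)\bigr)$. Because the Kummer map $\delta$ depends only on $C$, the element $c\theta_Q(\delta(L))$ on the left corresponds under this bijection to the class of the same cocycle $\sigma \mapsto c\theta_{Q_0}(\delta(L))(\sigma) \in \PSOrth(Q_1^0)(k^s)$, now viewed with respect to the $c(\alpha)$-twisted Galois action.

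The decisive observation is that both $c(\alpha)$ and $c\theta_{Q_0}(\delta(L))$ take values in the abelian subgroup $c(\AAut^+(Q_0)(k^s)) \cong (\mathbb{Z}/2\mathbb{Z})^{2n}$ of $\PSOrth(Q_1^0)(k^s)$, by Proposition \ref{aut+}. Consequently, conjugation by $c(\alpha)$ acts trivially on the image of $c\theta_{Q_0}$, so on this abelian image the twisted and untwisted cocycle/coboundary conditions coincide. Shankar--Wang $(\#)$ supplies a trivialization of $c\theta_{Q_0}(\delta(L))$ for the untwisted action, which therefore also trivializes it in the twisted cohomology; transporting back via $\tilde\psi$ yields a trivialization of $c\theta_Q(\delta(L))$ in $\PSOrth(Q_1)(k^s)$, establishing that $(\star\star\star\star)$ is a complex.

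The main obstacle will be the first step, namely verifying that $\theta_Q$ is functorial under $k^s$-isomorphisms of pencils with fixed discriminant. This requires a careful reading of the explicit construction of $\theta_Q$ in Proposition \ref{autpic}: one must check that $\psi$ transports the eigenvectors of $Q_2^0 (Q_1^0)^{-1}$ to those of $Q_2 Q_1^{-1}$ in a way compatible with the labeling of the ramification points $P_i$ of $C \to \mathbb{P}^1$ used to define $\theta$. Once this naturality is secured, the abelian-twist argument above makes the transfer of the Shankar--Wang vanishing essentially formal.
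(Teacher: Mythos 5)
Your overall strategy --- compare $Q$ with the Wang pencil $Q_0$ over $k^s$, identify the two copies of $H^1(k,\PSOrth(\cdot))$ via the descent cocycle of a $k^s$-isomorphism of pencils, and transfer the Shankar--Wang vanishing $(\#)$ --- is the same as the paper's, and the naturality of $\theta_Q$ that you single out as the main obstacle is indeed needed (and holds). But the step you call decisive is where the argument breaks. It is true that, since $c(\alpha)$ and $c\theta_{Q_0}(\delta(L))$ both take values in the abelian, Galois-stable subgroup $A=c(\AAut^+(Q_0)(k^s))$, the twisted and untwisted \emph{cocycle} conditions agree on $A$-valued maps. The \emph{coboundary} conditions do not: triviality of $b$ in $H^1(k,{}_{c(\alpha)}\PSOrth(Q_1^0))$ means $b(\sigma)=g^{-1}c(\alpha)(\sigma)\sigma(g)c(\alpha)(\sigma)^{-1}$ with $g$ ranging over all of $\PSOrth(Q_1^0)(k^s)$, not over $A$, whereas untwisted triviality means $b(\sigma)=g^{-1}\sigma(g)$; the two differ by the commutator of $c(\alpha)(\sigma)$ with $\sigma(g)$, which has no reason to vanish when $g\notin A$. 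Equivalently, under the twisting bijection ${}_aT([b])=[ba]$ of the paper, triviality of $[b]$ in the twisted set is equivalent to $[ba]=[a]$ in $H^1(k,\PSOrth(Q_1^0))$, and knowing only $[b]=0$ there does not give this, because $H^1(k,A)\to H^1(k,\PSOrth(Q_1^0))$ is far from injective --- measuring its kernel is precisely the content of $(\#)$.

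A symptom of the gap is that your argument never uses the solubility of $Q$: any two nonsingular pencils with the same separable monic discriminant are $k^s$-isomorphic, so everything you do applies verbatim to a non-soluble $Q$, where the conclusion should not be expected. The paper closes exactly this gap with Lemma \ref{complex-lemma}: solubility of $Q$ (plus hyperbolicity of $Q_1$) and the surjectivity part of Shankar--Wang's Theorem 10 allow one to choose the comparison cocycle $\beta$ so that $[\beta]=\theta_{Q_0}\delta(\alpha)$ for some $\alpha\in\frac{\Pic C}{2}$. Then ${}_{c\beta}T\bigl(c\theta_Q\delta(\gamma)\bigr)=c\theta_{Q_0}\delta(\alpha+\gamma)$, which vanishes by $(\#)$ because $\alpha+\gamma$ again lies in $\frac{\Pic C}{2}$; and since $Q_1$ is hyperbolic one has $[c\beta]=0$, so the bijection ${}_{c\beta}T$ carries the base point to $0$ and one may conclude $c\theta_Q\delta(\gamma)=0$. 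Both inputs --- that the comparison class itself comes from $\delta$, and that $[c\beta]=0$ --- are needed, and neither follows from the abelian-subgroup observation alone. Your proof would be repaired by inserting Lemma \ref{complex-lemma} and replacing the ``coboundary conditions coincide'' step with this twisting-bijection computation.
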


We begin with the following 

\begin{lemma}
\label{complex-lemma}Let $Q = tQ_1 - Q_2$ be a soluble pencil with disc$(Q) = f(t)$ a monic polynomial 
with no multiple zeros. With the notation as in (\ref{complex}), there is a 1-cocycle 
$\beta : \Gamma_k \to \Pictwo C$ whose class in $H^1(k, \Pictwo C)$ belongs to the image of 
$\delta : \frac{\Pic C}{2} \to H^1(k, \Pictwo C) $ such that $Q = Q_0^\beta$. 
\end{lemma}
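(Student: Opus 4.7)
The strategy is in two steps: first, construct the cocycle $\beta$ by comparing $Q$ with the standard soluble pencil $Q_0$ over the separable closure; second, use the $k$-rational soluble subspace of $Q$ together with the (twisted) Desale--Ramanan correspondence to exhibit $[\beta]$ as $\delta$ of a line bundle class on $C$.

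\emph{Constructing the cocycle.} Since $Q$ and $Q_0$ are both nonsingular pencils of rank $2n+2$ with discriminant $f(t)$, they admit simultaneous diagonalizations over $k^s$ to the common form $t\cdot I - \mathrm{diag}(\lambda_1,\ldots,\lambda_{2n+2})$, where $\lambda_i$ are the roots of $f(t)$ (cf.\ the proof of Proposition~\ref{reed}). Any $k^s$-linear isomorphism $\psi$ matching these standardized pencils yields a $1$-cocycle $\sigma\mapsto \psi^{-1}\sigma(\psi)$ in $Z^1(\Gamma_k,\AAut^+(Q_0))$; applying the isomorphism $\theta_{Q_0}\colon \Pictwo C \to \AAut^+(Q_0)$ of Proposition~\ref{autpic} one obtains the desired cocycle $\beta\colon \Gamma_k \to \Pictwo C$. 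Unravelling Case~1 of the twist construction in \S\ref{twisting-pencils} confirms $Q_0^\beta \simeq Q$.

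\emph{Showing $[\beta] \in \mathrm{im}(\delta)$.} The $n$-dimensional $k$-rational subspace $W \subset V$ on which both $Q_1$ and $Q_2$ vanish gives a $k$-rational point $\P(W) \in I_{n-1}(Q)(k)$. Via the (twisted) Desale--Ramanan correspondence of Theorem~\ref{phi-eta}, adapted to the pencil $Q_0^\beta$ in place of $Q_\eta^\alpha$ (the transition between the two pencils $Q_\eta$ and $Q_0$ being itself a twist by a line-bundle class in $\mathrm{im}(\delta)$), the Grassmannian $I_{n-1}(Q)$ is identified with the twisted moduli space $M_C^\beta(2, L)$ for a suitable odd-degree line bundle $L$ on $C$. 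The rational point $\P(W)$ transports to a $k$-rational point of $M_C^\beta(2, L)$; by the Ramanan construction recalled in the appendix this rational point is induced by an actual rank-$2$ bundle $F$ on $C$ defined over $k$. Comparing $F$ with a $\beta$-twisted bundle $E$ over $k^s$ representing the same moduli point, one writes $E \simeq F \otimes M$ for a $k^s$-line bundle $M$ on $C$; the identity $\sigma(E) \simeq E \otimes \beta_\sigma^{-1}$ then becomes $\sigma(M) \simeq M \otimes \beta_\sigma^{-1}$, so $[\beta] = \delta([M^{-2}])$ with $M^{-2} \simeq \det F \otimes L^{-1} \in \Pic C$.

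\emph{Main obstacle.} The technically delicate part is passing from a $k$-rational point of $M_C^\beta(2,L)$ to an actual $k$-rational rank-$2$ bundle $F$ on $C$: this requires the Galois-equivariance of the Desale--Ramanan construction in the appendix and the fact that a soluble subspace $W/k$ produces descent data compatible with $\beta$. Once this is in place the identification of $[\beta]$ with $\delta([M^{-2}])$ is essentially a bookkeeping exercise, but tracking the normalizations across the isomorphisms $\theta_{Q_0}$ and $\phi_\eta$, and the transition from $Q_\eta$ to $Q_0$, absorbs most of the technical content.
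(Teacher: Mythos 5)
There is a genuine gap, and it sits exactly where the arithmetic content of the lemma lives. Your second step asserts that the $k$-rational point of the twisted moduli space obtained from the soluble subspace ``is induced by an actual rank-$2$ bundle $F$ on $C$ defined over $k$.'' This is not justified, and it is essentially equivalent to what you are trying to prove: a $k$-point of $M_C^\beta(2,L)$ corresponds only to a bundle $E$ over $k^s$ with $\sigma(E)\cong E\otimes\beta_\sigma$, i.e.\ to an $\alpha$-twisted sheaf, and the obstruction to replacing $E$ by an honest bundle over $k$ (after a line-bundle twist) is precisely the Brauer class $\tilde\alpha$. The existence of a rank-$2$ twisted sheaf only shows $\mathrm{index}(\tilde\alpha)\mid 2$; the conclusion $[\beta]\in\mathrm{im}(\delta)$ is the stronger statement that $\alpha$ is essentially trivial, i.e.\ $\tilde\alpha=0$. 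So the final ``bookkeeping'' with $E\simeq F\otimes M$ presupposes the answer. There is also a dimension slip: the soluble $n$-plane $W$ gives a point of $I_n(Q)=I_g(Q)$, which is the Jacobian side of the correspondence, not of $I_{g-1}(Q)\cong M_C^\beta(2,L)$; and your appeal to ``the transition between $Q_\eta$ and $Q_0$ being a twist by a class in $\mathrm{im}(\delta)$'' is itself an instance of the lemma being proved.

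The paper's proof is much shorter and routes the whole difficulty through a quoted theorem: solubility gives an $n$-dimensional totally isotropic subspace of $Q_1$, and since $f$ is monic, $\mathrm{disc}(Q_1)=1$, so $Q_1$ is hyperbolic and may be identified with $Q_1^0$; then Shankar--Wang \cite[Theorem 10]{SW} states exactly that a soluble pencil $tQ_1^0-Q_1'$ with discriminant $f(t)$ is the twist of $Q_0$ by a cocycle whose class lies in $\delta(\Pic C/2)$. That theorem (in substance, the statement that solubility trivializes the associated torsor under the Jacobian, so the class of $\beta$ dies in $H^1(k,J)$ and hence comes from $\Pic C/2$) is the irreplaceable input, and your proposal contains no substitute for it. A secondary issue in your first step: a simultaneous diagonalization over $k^s$ only produces a cocycle with values in the full group $\Sim_{Q_0}/\Gm\cong(\Z/2)^{2n+1}$, and some argument (e.g.\ via the determinant/similitude character, using equality of discriminants) is needed before you may assume it lands in $\AAut^+_{Q_0}\cong(\Z/2)^{2n}$ and apply $\theta_{Q_0}^{-1}$.
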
 

\begin{proof} Since $Q$ is soluble, $Q_1$ contains an $n$-dimensional linear subspace. We have
disc$(Q) = $ disc$(tQ_1 - Q_2) = f(t)$ with $f(t)$ monic. Hence disc$(Q_1) = $ leading coefficient of 
$f(t)$  modulo squares  = 1. Hence $Q_1$ is hyperbolic and $Q_1$ is isomorphic to $Q_1^0$. Hence 
$Q$ is isomorphic to $tQ_1^0 - Q_1'$ for some  regular quadratic form $Q_1'$. The pencil $tQ_1^0 - Q_1'$ is soluble and in view of 
(\cite[Theorem 10]{SW}), there is $[N] \in \frac{\Pic C}{2}$ whose image $\tilde{\beta} \in H^1(k, \Pictwo C)$ under $\delta$ gives the
twist $(Q_0)^\beta \simeq tQ_1^0 - Q_1'$; here $\beta$ is a cocycle representing $\tilde{\beta}$.
Since $Q  \simeq tQ_1^0 - Q_1'$, we get $Q \simeq (Q_0)^\beta$. 
\end{proof}

We now recall the twisting bijection in Galois cohomology (\cite{Serre}). Let $G$ be a discrete 
$\Gamma_k$-group and $a : \Gamma_k \to G$ be a cocycle. 
Let $_aG $ be the twist of $G$ by the cocycle $\tilde{a}: \Gamma_k \to \Aut(G)$ given by 
$\tilde{a}(\sigma)(g) = a(\sigma)\sigma(g) a(\sigma)^{-1}$. 
The twisting mp $_aT : H^1(k, ~_aG) \to H^1(k, G)$ is defined by $_aT(b) = ba$ at the level of cocycles. 
Indeed one verifies that if $b : \Gamma_k \to ~_aG$ is a 1-cocycle, the map 
$\Gamma_k \to G$, $\sigma \mapsto b(\sigma)a(\sigma)$ is a 1-cocycle. The twisting map
$_aT$ is a bijection which sends $[1] $ to $[a] \in H^1(k, G)$. 

 \vskip 3mm
 
 \noindent
{\it Proof of the proposition:}  Since $Q = tQ_1 - Q_2$ is a soluble pencil with disc$(Q) = f(t)$, by (\ref{complex-lemma}), 
 there is $\alpha \in \frac{\Pic C}{2}$ such that if $\tilde{\beta} = \theta_{Q_0} \circ \delta(\alpha) \in H^1(k, \AAut^+(Q_0))$, 
 a cocycle $\beta$ associated to $\tilde{\beta}$ gives an isomorphism $(Q_0)^\beta \simeq Q$. Hence 
 $\AAut(Q_0)^\beta = \AAut(Q)$, $(\PSOrth(Q_1^0))^{\beta} = \PSOrth(Q_1)$. We have a commutative diagram of maps 
  $$
   \begin{array}{cccccc}
     H^1(k, \Pictwo C) &  \buildrel{\theta_{Q}}\over{ \to} & H^1(k, \AAut^+(Q)) &   \buildrel{c}\over{ \to} & H^1(k, \PSOrth(Q_1))  \\
 _{\delta( \alpha)}T \Big \downarrow   &  & _{\beta}T \Big\downarrow   & & _{c\beta}T \Big \downarrow  \\
    H^1(k, \Pictwo C) &  \buildrel{\theta_{Q_0}}\over{ \to} & H^1(k, \AAut^+(Q_0)) &   \buildrel{c}\over{ \to} & H^1(k, \PSOrth(Q^0_1)) . 
   \end{array}
   $$
 Here $(\Pictwo C)^\delta(\alpha) = \Pictwo C$ and $_{\delta(\alpha)}T(f) = f + \delta(\alpha)$ for all $f \in H^1(k, \Pictwo C)$. 
 
 We now prove that the sequence 

\hspace*{\fill}
 $ \frac{\Pic C}{2}\buildrel{\delta}\over{\to}  H^1(k, \Pictwo C) \buildrel{c\theta_Q}\over{ \to} H^1(k, \PSOrth(Q_1))$
 \hspace*{\fill}
 
\noindent
 is a complex. 
 
 Let $\gamma \in \frac{\Pic C}{2}$.  Then 
 $$
 _{c\beta} T c \theta_Q \delta(\gamma) = c \theta_{Q_0} ~_{\delta(\alpha)}T (\delta(\gamma)) = 
 c \theta_{Q_0}  (\delta(\alpha + \gamma)) = 0$$
 in view of $ (\#)$. 
 
 Since $Q$ is a soluble pencil with disc$(Q) = f(t)$ monic, $Q_1$ is hyperbolic and
 $Q_1 \simeq Q_1^0$. Since $Q_1 = (Q_1^0)^{c\beta}$, $c\beta = 0 \in H^1(k, \PSOrth(Q_0))$ and
 $_{c\beta}T$ is a bijection. Thus,
 $(c \theta_Q) \circ \delta $ is zero and $(\star \star \star \star)$ is a complex. 
 \hfill $\Box$.

 \begin{cor}
 \label{mc-ig1} Let $k$ be a number field and $C/k$ a smooth projective geometrically integral 
 hyperelliptic curve of genus $g \geq 2$. 
 Suppose that $C$ admits a rational point $P$ which is unramified for the covering 
 $C \to \P^1$. Let $\eta = \OO((2g-1)P)$ and $Q_\eta$ the pencil of quadrics in (\S \ref{Appendix}).
 Let $\alpha \in H^2(C, \mu_2)$ be a gerbe which is locally  essentially trivial. Then $Q_\eta^\alpha$ 
 is a pencil of quadrics and the twisted moduli space $M_C^\alpha(2, \eta)$ is isomorphic to the 
 Grassmannian $I_{g-1}(Q_\eta^\alpha)$ of the pencil of quadrics $Q_\eta^\alpha$.  
 \end{cor}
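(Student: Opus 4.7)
The plan is to exploit the isomorphism already produced in Theorem \ref{phi-eta} and reduce the problem to verifying that the twisted pencil $Q_\eta^\alpha$ is a genuine pencil of quadrics over $k$, rather than a pencil over a nontrivial central simple algebra. Indeed, once this is established, Theorem \ref{phi-eta} supplies the desired $k$-isomorphism $\phi_\eta^\alpha : M_C^\alpha(2,\eta) \to I_{g-1}(Q_\eta^\alpha)$.

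To carry this out, let $\beta : \Gamma_k \to \twoPic C$ be a $1$-cocycle representing the image of $\alpha$ under the projection $H^2(C,\mu_2) \to H^1(k, \twoPic C)$ from $(\star\star)$. By Proposition \ref{trivial-twist}, $Q_\eta^\alpha$ is a pencil of quadrics if and only if the class of $\theta_{Q_\eta}\circ\beta$ vanishes in $H^1(k, \PGL_{2g+2})$ under the composite $\AAut^+(Q_\eta) \to \PSOrth(Q_\eta^1) \to \PGL_{2g+2}$. This image represents the Brauer class of a degree $2g+2$ central simple algebra $A/k$. By the Hasse-Brauer-Noether-Albert theorem, it suffices to prove that $[A_{k_\nu}]=0$ in $\Br(k_\nu)$ for every place $\nu$ of $k$.

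For the local verification, note first that by Proposition \ref{qeta} the pencil $Q_\eta$ is soluble with $Q_\eta^1$ hyperbolic and $\disc(Q_\eta)=f(t)$, so Proposition \ref{complex} applies over any extension of $k$, in particular over each $k_\nu$. The hypothesis that $\alpha$ is locally essentially trivial means that $\alpha_{k_\nu}$ has trivial image in $H^2(C_{k_\nu},\Gm)$, hence by the Kummer sequence arises from some $[N_\nu]\in \Pic C_{k_\nu}/2$. In particular the class $\tilde\beta_\nu \in H^1(k_\nu, \twoPic C_{k_\nu})$ lies in the image of $\delta : \Pic C_{k_\nu}/2 \to H^1(k_\nu, \twoPic C_{k_\nu})$. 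Applying the complex $(\star\star\star\star)$ of Proposition \ref{complex} over $k_\nu$, we obtain $c\theta_{Q_\eta}(\tilde\beta_\nu) = 0$ in $H^1(k_\nu, \PSOrth(Q_\eta^1))$, and by functoriality the further image in $H^1(k_\nu, \PGL_{2g+2})$ is also zero. Hence $[A_{k_\nu}]=0$ for all $\nu$.

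The main (conceptual) obstacle is thus arranging that Proposition \ref{complex} is applicable place-by-place: this is handled by Proposition \ref{qeta}, which guarantees that $Q_\eta$ is soluble with hyperbolic first member and monic discriminant $f(t)$, the precise hypotheses required. Once the collection of local triviality statements is in hand, the Hasse principle for Brauer groups gives $[A]=0$ globally, Proposition \ref{trivial-twist} converts this into the assertion that $Q_\eta^\alpha$ is a pencil of quadrics, and Theorem \ref{phi-eta} finally identifies $M_C^\alpha(2,\eta)$ with $I_{g-1}(Q_\eta^\alpha)$ over $k$.
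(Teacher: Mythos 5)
Your proposal is correct and follows essentially the same route as the paper: reduce to showing $Q_\eta^\alpha$ is a genuine pencil via Proposition \ref{trivial-twist}, use local essential triviality together with the complex $(\star\star\star\star)$ of Proposition \ref{complex} (applicable by Proposition \ref{qeta}) to kill the class in $H^1(k_\nu,\PGL_{2g+2})$ at every place, and conclude by Hasse--Brauer--Noether--Albert before invoking Theorem \ref{phi-eta}. Your write-up is in fact slightly more explicit than the paper's in flagging that Proposition \ref{qeta} supplies the solubility and hyperbolicity hypotheses needed to apply Proposition \ref{complex} over each $k_\nu$.
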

 
 \begin{proof} Since $\alpha$ is locally essentially 
 trivial, $\alpha \in Im(\delta_{k_\nu} : \frac{\Pic C_{k_\nu}}{2} \to H^2(C_{k_\nu}, \mu_2))$ for all places $\nu$ of $k$. 
 Let $\beta \in H^1(k, \Pictwo C)$ be the image of $\alpha$.  
 Then $\beta \in  Im (\frac{\Pic C_{k_\nu}}{2} \to H^1(k_\nu, \Pictwo C))$ for all places $\nu$ of $k$. 
 By (\ref{complex}), we have $c\theta_{Q_\eta}(\beta) = 0 \in H^1(k_\nu, \PSOrth(Q_\eta^1))$ for all places $\nu$ of $k$. 
 In particular the image of $c\theta_{Q_\eta}(\beta)$ in $H^1(k_\nu, \PGL_{2g+2})$ is trivial  for all places $\nu$ of $k$. 
 Let $A$ be a central simple algebra representing the image of $c\theta_{Q_\eta}(\beta)$ in $H^1(k, \PGL_{2g+2})$.
 Then $A \otimes {k_\nu}$ is a matrix algebra for all places $\nu$ of $k$. 
 By Hasse-Brauer-Noether-Albert theorem (\cite{H-B-N}),  $A$ is a matrix algebra. 
 Hence by (\ref{trivial-twist}), $Q_\eta^\alpha$ is a pencil. 
 \end{proof}

\section{Pencils of quadrics in $\mathbb{P}^5$ over number fields }
\label{pencil-p5}

In this section we prove the following weak form of a Hasse principle for a smooth intersection of two quadrics in 
$\mathbb{P}^5$ over a number field.

\begin{theorem} 
\label{lgp}
Let $k$ be a global field of characteristic not $2$. Let $X$ be a smooth intersection of two quadrics 
$Q_1$ and $Q_2$ in $\mathbb{P}^5$.  Suppose $disc(Q_1) = 1$. If $X(k_\nu)$ contains a line for all places $\nu$ of $k$, 
then, $X(k)$ is nonempty.
\end{theorem}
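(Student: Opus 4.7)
The plan is to identify the variety $I_1(Q)$ of lines on $X$ with a twisted moduli space of rank-two bundles on an associated hyperelliptic curve, and then to invoke Theorem~\ref{essentially-trivial}.

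First I would extract the hyperelliptic data. Let $Q = tQ_1 - Q_2$ and $f(t) = disc(Q) \in k[t]$. The assumption $disc(Q_1)=1$ forces the leading coefficient of $f$ (which is $\pm\det Q_1$ up to sign) to be a square, so after scaling we may take $f$ monic of degree $6$. The curve $C: y^2 = f(t)$ is then a smooth geometrically integral genus~$2$ hyperelliptic curve in which $\infty \in \mathbb{P}^1$ splits into two $k$-rational points, and hence $C$ has a $k$-rational point $P$ unramified for the double cover $\theta: C \to \mathbb{P}^1$. Set $\eta = \OO_C(3P)$. By Ramanan's construction in the appendix there is a reference soluble pencil $Q_\eta$ with $disc(Q_\eta) = f(t)$, and Theorem~\ref{phi-eta} gives a $k$-isomorphism $M_C(2,\eta) \simeq I_1(Q_\eta)$ compatible with the $\Pictwo C$-action via $\theta_{Q_\eta}$.

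Next I would realise $Q$ as a twist of $Q_\eta$. Two nonsingular pencils with the same discriminant polynomial are isomorphic over $k^s$, so there is a cocycle $\beta: \Gamma_k \to \AAut^+(Q_\eta) \simeq \Pictwo C$ with $Q \simeq Q_\eta^\beta$. Lifting $\beta$ via the split decomposition $(\star\star)$ of \S\ref{twisted-moduli} to a class $\alpha \in H^2(C,\mu_2)$ with $\alpha_{\bar k}=0$, the equivariant form of Theorem~\ref{phi-eta} yields a $k$-isomorphism $I_1(Q) \simeq M_C^\alpha(2,\eta)$. Because $\OO_C(2P)$ is a square, tensoring with $\OO_C(P)$ further identifies $M_C^\alpha(2,\eta) \simeq M_C^\alpha(2,\OO_C(P))$ over $k$.

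I would then argue that $\alpha$ is locally essentially trivial. The hypothesis that $X(k_\nu)$ contains a line says precisely that $Q_{k_\nu}$ is soluble in the sense of \S\ref{soluble}. Applying Lemma~\ref{complex-lemma} twice -- to $Q_\eta$, which is globally soluble by Proposition~\ref{qeta}, to write $Q_\eta \simeq Q_0^\tau$ with $\tau \in \mathrm{Im}(\delta: \Pic C/2 \to H^1(k,\Pictwo C))$; and to the locally soluble pencil $Q_{k_\nu}$, to write $Q_{k_\nu} \simeq Q_{0,k_\nu}^{\gamma_\nu}$ with $\gamma_\nu \in \mathrm{Im}(\delta: \Pic(C_{k_\nu})/2 \to H^1(k_\nu,\Pictwo C))$ -- and comparing the resulting twists through the common reference $Q_0$, one finds $\beta_{k_\nu} = \gamma_\nu - \tau_{k_\nu}$ in $H^1(k_\nu,\Pictwo C)$. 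Hence $\beta_{k_\nu}$ lies in $\delta(\Pic(C_{k_\nu})/2)$ for every place $\nu$, which is exactly the condition that $\alpha$ is locally essentially trivial.

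Theorem~\ref{essentially-trivial} now produces a point of $M_C^\alpha(2,\OO_C(P))(k)$; pulling back through the isomorphisms of the previous paragraphs yields a $k$-rational point of $I_1(Q)$, i.e.\ a line on $X$ defined over $k$, and in particular $X(k) \neq \emptyset$. The main obstacle is the third step: the identifications $\theta_{Q_\eta}$ and $\theta_{Q_0}$ depend on the chosen reference pencil, so one must take care in translating the Shankar--Wang cocycle descriptions relative to $Q_0$ into statements about $\beta$ computed via $\theta_{Q_\eta}$, and verify that the difference cocycle $\gamma_\nu - \tau_{k_\nu}$ really represents $\beta_{k_\nu}$ under this identification so that the conclusion ``$\beta_{k_\nu} \in \mathrm{Im}(\delta)$'' is legitimate.
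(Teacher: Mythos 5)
Your argument is correct and follows the paper's own proof essentially step for step: the same curve $C$, point $P$ and pencil $Q_\eta$, the same realisation $Q=Q_\eta^{\beta}$, the same double application of Lemma~\ref{complex-lemma} through the reference pencil $Q_0$ to show that $\alpha$ is locally essentially trivial, and the same final appeal to Corollary~\ref{mc-ig1} and Theorem~\ref{essentially-trivial}. One small slip of terminology: $I_1(Q)=I_{g-1}(Q)$ for $g=2$ is the base locus $X$ itself (one-dimensional \emph{vector} subspaces, i.e.\ points of $X$), not the variety of lines on $X$, so a $k$-point of $M_C^{\alpha}(2,\OO_C(P))$ gives a $k$-point of $X$ directly; the conclusion is unaffected.
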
 

\begin{proof}
Let $Q =t Q_1 - Q_2$ and $f(t) = disc(Q)$.  Since $X = Q_1 \cap Q_2$ is smooth,  $f(t)$ has distinct linear factors.
 Since $disc(Q_1) = 1$,  $f(t)$ 
has leading coefficient a square, which we assume to be 1 after scaling.  Further the degree of $f(t)$ is 6. 
Let $C$ be the smooth projective  hyperelliptic curve given  by the affine equation $y^2 = f(t)$
 and $\gamma : C \to {\mathbb P}^1$ be the corresponding covering.  Since $f(t)$ is monic, the point $Q$ at infinity of 
 $\mathbb{P}^1$ splits into two rational points in $C(k)$. Let $P$ be a point of $C(k)$ mapping to $Q$ under $\gamma$.

 Let $Q_{\eta}$ be the pencil of quadrics in $\mathbb{P}^5$ constructed by Ramanan in  (\S\ref{Appendix}) with reference to the 
 choice of the line bundle $\eta = \OO((2g-1)P)$ (cf. \S  5). Then $ Q_{\eta}$ is a soluble pencil with 
 disc($Q_{\eta} $)$= f(t)$. Hence there is an isomorphism  $Q_{\eta,k^s} \simeq Q_{k^s}$.
 Let $\beta : \Gamma_k \to \Pictwo C$ be a cocycle   such that $Q = Q_{\eta}^{\beta}$. 
  Let the class  $\tilde{\beta} \in H^1(k, \Pictwo C)$ of $\beta$ be the image of 
  $\alpha \in H^2(C, \mu_2)$ with $\alpha_{k^s} = 0$. We claim that $\alpha$ is locally essentially trivial.

 By hypothesis, the pencil $Q_{k_{\nu}}$ is soluble with disc ($Q_1)$ = 1.
  By (\ref{complex-lemma}), there is  a 1-cocycle $\theta_{\nu}: \Gamma_{k_{\nu}} \to 
 \Pictwo C_{k_\nu}$  whose class  belongs to the image of $\delta: Pic C_{k_{\nu}} /2 \to H^1(k_{\nu}, \Pictwo C)$
 such that $Q_{k_\nu} = Q_0^{\theta_\nu}$, where $Q_0 $ is the pencil of 
 Arul-Wang (cf. \S 5).  Since $Q_{\eta}$ is also a soluble pencil with $Q_{\eta}^1$ hyperbolic, there is a 1-cocycle  $\phi: \Gamma_k \to 
 \Pictwo C$ whose class belongs to the image of $\delta$ and $Q_0 = Q_\eta^\phi$.
 Thus over $k_{\nu}$,  $Q = Q_{\eta}^{\theta_{\nu} + \phi}$. Since $Q = Q_{\eta}^{\beta}$, we have, locally,
 $[\beta] = [\theta_v + \phi]$ which is in the image of $\delta: Pic C_{k_{\nu}} /2 \to H^1(k_{\nu}, \Pictwo C)$.  Thus
 $\alpha$ maps to zero in $H^2(C_{k_\nu}, G_m) $ and  $\alpha$ is locally essentially trivial. 
 
 We have an isomorphism (\ref{mc-ig1})
 
 $$\phi_\eta^\alpha :  M_C^\alpha(2, \eta) \to I_{1}(Q_\eta^\alpha ) =     I_{1}(Q)  = X.$$
 
 Since $\alpha$ is locally essentially trivial  and degree($\eta$) is odd, by (\ref{essentially-trivial}), 
  $M_C^\alpha(2, \eta)(k)$ is nonempty. Since  $\phi_\eta^\alpha$ is an isomorphism, $X(k)$
  is not empty and this completes the proof of the theorem.
\end {proof}
 
  Let $Q = tQ_1 - Q_2$ be a pencil of quadrics 
in $\mathbb{P}^5$ with disc$(Q) = f(t)$. Let $C$ be the smooth projective  hyperelliptic  curve $C$ given by $y^2 = f(t)$. 
The next corollary, as pointed out by Colliot-Th\'el\`ene,  replaces the  condition disc$(Q_1) = 1$ in (\ref{lgp})  with 
 $C(k) \neq \emptyset$. 
  
 \begin{cor} 
\label{lgp-cor}
Let $k$ be a global field of characteristic not $2$. Let $Q = tQ_1 - Q_2$ be a pencil of quadrics 
in $\mathbb{P}^5$ with disc$(Q) = f(t)$ and  $X = Q_1 \cap Q_2$ smooth.  
Suppose the hyperelliptic  curve $C$ given by $y^2 = f(t)$ has a  zero-cycle of degree 1.
 If $X(k_\nu)$ contains a line for all places $\nu$ of $k$, 
then, $X(k)$ is nonempty.
\end{cor}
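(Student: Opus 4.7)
The plan is to mirror the proof of Theorem~\ref{lgp}, substituting the rational point $P\in C(k)$ (which there was provided by the hypothesis disc$(Q_1)=1$) by a $k$-rational line bundle $\eta$ of degree $2g-1=3$ extracted from the zero-cycle hypothesis. A zero-cycle of degree one gives $\Pic^{1}(C)(k)\neq\emptyset$ and hence a $k$-rational $\eta\in\Pic^{2g-1}(C)$. Ramanan's construction in the appendix, which depends only on a $k$-rational line bundle of degree $2g-1$ rather than on a specific rational point, produces a soluble pencil $Q_\eta$ with disc$(Q_\eta)=f(t)$ and a $k$-isomorphism $\phi_\eta : M_C(2,\eta)\to I_{g-1}(Q_\eta)$.

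Exactly as in the proof of \ref{lgp}, Lemma~\ref{complex-lemma} gives $Q\simeq Q_\eta^\beta$ for a cocycle $\beta:\Gamma_k\to\twoPic C$ whose class is the image of some $\alpha\in H^2(C,\mu_2)$ with $\alpha_{\bar k}=0$. The hypothesis that $X(k_\nu)$ contains a line for every $\nu$ says the pencil $Q_{k_\nu}$ is locally soluble, so Proposition~\ref{complex} applied over each $k_\nu$ forces $[\beta]_{k_\nu}$ to lie in the image of $\Pic(C_{k_\nu})/2\to H^1(k_\nu,\twoPic C)$; hence $\alpha$ is locally essentially trivial and $\tilde\alpha\in{}_2\Sha\Br(C)$. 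By Corollary~\ref{mc-ig1}, $Q_\eta^\alpha$ is a genuine pencil of quadrics, and twisting $\phi_\eta$ yields a $k$-isomorphism $\phi_\eta^\alpha : M_C^\alpha(2,\eta) \to X$, reducing the theorem to showing $M_C^\alpha(2,\eta)(k)\neq\emptyset$.

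For this final step, Corollary~\ref{per-ind} --- which already absorbs the zero-cycle hypothesis via the odd-degree reduction on Brauer indices --- yields that the index of $\tilde\alpha$ divides $2$, producing a stable $\alpha$-twisted rank-$2$ locally free sheaf on $C$ over $k$ of trivial determinant, by the theorem of \cite[3.1.2.1]{Lb} recalled in \S\ref{moduli-stacks}. To promote this sheaf to one of determinant exactly $\eta$, one must extend Theorem~\ref{essentially-trivial} from the single-rational-point setting to the present one: the single-point twisted Hecke correspondence of \S\ref{twisted-hecke} is replaced by Hecke along an effective $k$-rational divisor $D\sim\eta$ of degree $3$, which exists by Riemann--Roch since $h^0(\eta)\geq 2$. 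Worked out iteratively along the components of $D$ (via Weil restriction of scalars for irreducible components of degree $>1$), the resulting fibration sits over $M_C^{ss,\alpha}(2,\OO_C)\simeq\mathbb{P}^3$ (Theorem~\ref{mainthm}) as a tower of conic bundles over the complement of the Kummer quartic; Colliot-Th\'el\`ene's Hasse principle~\ref{colliot-hasse} then applies at each stage once local nonemptiness is verified, which follows from $X(k_\nu)$ containing a line via the local identification $X_{k_\nu}\simeq M_C^\alpha(2,\eta)_{k_\nu}$. The main obstacle is precisely this extension of~\ref{essentially-trivial}: verifying that the iterated or Weil-restricted Hecke variety is controlled by smooth conic bundles of the type required by~\ref{colliot-hasse}, and that local nonemptiness propagates through the iteration, is the heart of the argument.
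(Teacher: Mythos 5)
There is a genuine gap, in fact two. First, your opening reduction already breaks down: you invoke Lemma~\ref{complex-lemma} to write $Q\simeq Q_\eta^\beta$ ``exactly as in the proof of \ref{lgp}'', but that lemma requires the soluble pencil to have monic discriminant, which forces $Q_1$ to be hyperbolic; in Theorem~\ref{lgp} this is exactly what the hypothesis $\mathrm{disc}(Q_1)=1$ supplies. In Corollary~\ref{lgp-cor} that hypothesis has been dropped, so $Q_1$ need not be hyperbolic over $k$, the point at infinity of $\P^1$ need not split in $C$, and the comparison with the Shankar--Wang pencil $Q_0$ (and hence with $Q_\eta$) is not available over $k$. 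Second, and more seriously, your final step is not a proof but a research plan: you yourself flag that the needed extension of Theorem~\ref{essentially-trivial} --- replacing the Hecke modification at a single rational point by modification along an effective degree-$3$ divisor, with Weil restriction over its components --- is ``the heart of the argument'' and is left unverified. The conic-bundle structure of the Hecke fibre (Lemma~\ref{p1fibres}) and the applicability of the Hasse principle~\ref{colliot-hasse} are specific to elementary modification at one rational point of $C$; nothing in the paper, and nothing in your sketch, establishes that the iterated or Weil-restricted version is again a smooth conic bundle over the complement of a geometrically integral quartic in $\P^3$. Also note that Corollary~\ref{per-ind} only bounds the index of $\tilde\alpha$; it does not by itself produce a $k$-point of the particular coarse moduli space $M_C^\alpha(2,\eta)$, which is what the isomorphism with $X$ requires.

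The paper avoids all of this with a short descent argument that you should compare with. By a result of Colliot-Th\'el\`ene (\cite[p.~599]{CT2005}), the degree-one zero-cycle can be moved to one supported on closed points $x_i$ unramified over $\P^1$, with residue fields $L_i$ satisfying $\sum n_i[L_i:k]=1$. Over each $L_i$ the curve acquires a rational point unramified for the hyperelliptic covering, which is equivalent to being able to rewrite the pencil with $\mathrm{disc}(Q_1)=1$ for a suitable point of $\P^1$ at infinity; the local hypothesis persists under base change, so Theorem~\ref{lgp} applies over $L_i$ and gives $X(L_i)\neq\emptyset$. Some $[L_j:k]$ is odd. By the Amer--Brumer theorem, $X(L_j)\neq\emptyset$ means the rank-$6$ quadratic form $tQ_1-Q_2$ is isotropic over $L_j(t)$; by Springer's theorem it is then isotropic over $k(t)$, and applying Amer--Brumer once more yields $X(k)\neq\emptyset$. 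This odd-degree descent at the level of the intersection of quadrics itself is exactly the device that lets one bypass any re-engineering of the Hecke correspondence.
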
 

\begin{proof} The curve $C$ having a rational point which is unramified for the hyperelliptic covering $C \to {\mathbb P}^1$
is equivalent to writing the pencil $Q$ as $tQ_1 - Q_2$ with disc$(Q_1) = 1$ for a suitable rational point of ${\mathbb P}^1$ at 
infinity.   

Since $C$ has a   zero-cycle of degree 1,    
by (\cite[p. 599]{CT2005}),  $C$ has a zero cycle $\sum n_ix_i$  
of degree one  with $x_i$ closed points of $C$ which are unramified over ${\mathbb P}^1$. 
Let $L_i = \kappa(x_i)$ be the residue field at $x_i$. 
Then there exists $P_i \in  C(L_i)$ which is unramified over ${\mathbb P}^1_{L_i}$. 
By (\ref{lgp}), $X(L_i) \neq \emptyset$ for all $i$.  Since $\sum n_i [L_i : k] = 1$, $[L_j : k] $ is odd for some 
$j$.  Since $X(L_j) \neq \emptyset$, the quadratic form $tQ_1 - Q_2$ is isotropic over $L_j(t)$ and by Springer's theorem, 
$tQ_1 - Q_2$ is isotropic over $k(t)$. Using  Amer Brumer theorem,  one concludes that $X(k) \neq \emptyset$. 
\end{proof}
  
 \appendix

 \section{\bf A note on hyperelliptic curves and pencils of quadrics}
 \smallskip
 \begin{center}   \textsc{S. Ramanan} \end{center}
 \label{Appendix}
 
\subsection{Introduction}~

There is a deep relationship  between a hyper-elliptic curve of genus $g > 0$,
and a pencil of quadrics in $\C ^{2g +2}$. This goes back to Gauthier  (\cite{LG}).
The space of $(g+1)$-dimensional (maximal) isotropic subspaces for
a non-singular quadric has two connected components. If the quadric 
has rank 1 less, maximal isotropic subspaces are still $(g+1)$-dimensional but
the space of such subspaces is connected. Thus pairs consisting 
of a quadric of the pencil (assuming it is generic) and a component of the space of maximal
isotropic subspaces in that quadric, constitute a 2-sheeted covering over $\P ^1$ or what is the
same, a
hyper-elliptic curve. Hence  it is not surprising that many constructions 
over the hyper-elliptic curve have corresponding geometric interpretations 
 in terms of the pencil. 

In particular, initially the Jacobian of the curve was described in terms of the
variety of $g$-dimensional isotropic spaces for all quadrics of the pencil.
Narasimhan and I in the case $g =2$, and Desale and I  (\cite{De-Ra}) in the general case, gave a similar 
description for the moduli varieties of vector bundles of rank $2$ over the curve, in terms of the geometry 
of the pencil.  Still later, I gave such a meaning  (\cite{Ra1})  to the variety of isotropic subspaces of other dimensions 
in terms of suitable moduli on the curve. 

More recently, Manjul Bhargava \cite{Manjul})  considered similar properties over a number field.
I am giving below a proof of the results mentioned above in such a way that they
are valid for curves over any field of characteristic $\neq 2$.   
This requires an assumption that the curve itself admits  a rational point. 
I have given full details of the construction in proving the isomorphism of the Jacobian with 
the variety of $g$-dimensional isotropic subspaces for the pencil and have sketched how 
similar considerations in the case of vector bundles of rank 2 are also valid over such fields. 

Parimala Raman  and Jaya Iyer  have been studying Selmer groups and Hasse
principle etc. and wished for such a proof. I am thankful to Parimala for 
encouraging me to re-visit what I had done half a century ago.  
  \\
 
 \subsection{Preliminaries and Notation}~

All considerations below are over any field $k$ of characteristic $\neq 2$.

Let $U$ be a 2-dimensional vector space and $P= P(U)$ the corresponding
projective line. A general element $s$ (with all roots nonzero and distinct over
the algebraic closure) of $\Gamma (P , {\OO}(2g +2)) = S^{2g +2}(U^*), g\geq 2$ 
defines an hyper-elliptic curve $C$ of genus $g$, namely 
$Spec(\OO\oplus \OO(-g-1))$ over $P$, with the obvious algebra structure on this 
$\OO_P$-Module, which uses the map 
$Sym ^2\OO(-g-1) = \OO(-2g -2) \to \OO$ 
given rise to by $s$. This realises $C$ as a $2$-sheeted covering $\pi :C \to P$.

The automorphism of the sheaf of algebras $Spec(\OO\oplus \OO(-g-1))$ 
acting as $+1$ on $\OO$ and $-1$ on $\OO(-g-1)$ gives rise to an involution 
$\iota $ of $C$ and the quotient is $P$. The $\iota $-fixed divisor $W$ (of degree $2g +2$)
in $C$ maps isomorphically  on the ramification divisor $D$ (which is the divisor in
$P$ defined by $s$). The hyperplane bundle $\OO(1)$ on $P$ has as inverse image, 
a line bundle $h$ on $C$ of degree $2$.  The divisor class of $D$ (resp. $W$) is 
$\OO(2g + 2)$ (resp. $h^{g+1}$). The section $s$ lifts to a section of $h^{2g+2}$ and 
is the square of a section of $h^{g +1}$ (unique up  to $\pm 1$) with $W$ as the divisor 
defined by it. 
 
For any line bundle $L$ on $C$ of degree $d$, we have an isomorphism (unique up to a scalar
multiple) $L\otimes {\iota ^*(L)} \simeq h^d$.
In particular, any line bundle $\alpha $ of degree $0$ 
gives rise to a trivialisation of $\alpha \otimes \iota ^*(\alpha  ^{-1})$ unique up to $\pm 1$.

In what follows we will adopt the following notation. For any vector bundle  $E$ on $C$ (over $k$ or 
any extension of $k$), we denote the vector space $\Gamma (W, E|W )$ by $W(E)$  and the image 
under the restriction map of $\Gamma (C,E)$  to  $W(E)$, by $C(E)$.

We denote the top exterior power of any (finite dimensional) vector space (or vector bundle)
$E$ by $det(E)$. We assume that the $1$-dimensional vector space $det(U)$ is identified 
with the field.  Consequently we may identify $U$ and $U^*$.
 \\

 \subsection{ Duality in $W$}~

The sheaf  $\OO(-2)$ is naturally the dualising sheaf $K_P$ (or what is the same, 
canonical bundle) of $P$.  The dualising sheaf $K_C$  of $C$ is consequently 
 $\pi ^*(K_P) \otimes \OO(W) \simeq h^{-2} \otimes h^{g+1} = h^{g-1}$. 
Also, the dualising sheaf $K_W$ of the divisor $W$ is the tensor product of the restriction to $W$ of 
$K_C$ and $\OO(W)$, namely $(h^{g-1}\otimes h^{g+1})|W = h^{2g}|W$. 

The duality in $C$ and that in $W$ are compatible in the following sense. If $E $ is any vector bundle on $C$
of degree $d$, we have  $\Gamma (C, E)$ is dual to $H^1(C, h^{g-1}\otimes E^*) $ on the one hand and  $W(E )$ 
is dual to $W(h^{2g} \otimes E^*) $ on the other. The transpose of the restriction map $\Gamma (C, E) \to W(E)$ is 
given by the connecting homomorphism of the exact sequence 
$$0\to  h^{g-1} \otimes E^* \to h^{2g} \otimes E^*  \to (h^{2g} \otimes E^*)|W  \to 0$$ namely,  $W(E)^* = W(h^{2g} \otimes E^*) 
\to H^1(C, h^{g-1}\otimes E^*) = \Gamma (C, E)^*$. The duality in $W$ can also be described by the map 
$W(E)\times W(h^{2g}\otimes E^*) \to W(h^{2g})$, followed by the connecting homomorphism of the  exact sequence
$$0 \to h^{g-1} \to h^{2g} \to h^{2g}|W\to 0.$$

Also, if $f:E\to F$ is any homomorphism, it gives a map $C(f):\Gamma (C, E) \to \Gamma (C, F)$.  One knows that the 
transpose of $C(f)$ is the induced homomorphism $H^1(C, h^{g-1}\otimes F^*) \to H^1(C, h^{g-1}\otimes F^*)$. 
Similarly we have a map $W(f):W(E)\to W(F)$ whose transpose is given by the natural map $W(h^{2g}\otimes F^*)
\to W(h^{2g}\otimes E^*)$.
\\

\subsection{Properties of a generic pencil}~

We wish to study properties of generic pencils of quadrics.  
We recall first some well-known properties of a quadric defined by a quadratic form $Q$. If $V$ is a vector space of dimension $n$, 
and $Q$ is a quadratic form on $V$, often we will  denote the corresponding quadric in $P(V)$  also by $Q$. We can also think of 
$Q$ as a symmetric linear map $V\to V^*$. The kernel of the above map is called the {\it nul-space} of $Q$. 
A subspace $A$ of $V$ is {\it isotropic} for $Q$ if the restriction of the quadratic form to $A$ is zero. If $n = 2g +2$ 
and $Q$ is non-degenerate or has nul-space of dimension 1, then the maximal isotropic subspaces have dimension $g+1$. 
The subvariety of the Grassmannian of $(g+1)$-dimensional subspaces of $V$, consisting of isotropic subspaces, 
has two irreducible components if $Q$ is non-degenerate and is itself irreducible if the nullspace of $Q$ is of dimension 1. 
Orthogonal transformations of $Q$ act on the space of isotropic subspaces of fixed dimension $d$. If $Q$ is non-degenerate, 
and $d = g+1$, an orthogonal transformation preserves each component if and only if its determinant is 1. 

Let $P = P(U)$ be a projective line and $V \to V^*\otimes \OO(1)$ be a family of quadrics parametrised by $P$. 
We call it a {\it pencil} of quadrics in $V$. We identify $det(V)$ with the field $k$. 
Its determinant $ \delta $ (called the {\it discriminant}) is a section of $\OO(n)$. In the following, we always assume 
that $\delta $ is non-zero, which is the same as saying that the generic quadric of the pencil is non-degenerate.
 If $\delta $ has $n$ zeros (in the algebraic closure) with multiplicity $1$ each, we call it a {\it generic pencil}. 
 The divisor in $P$ defined by $\delta $ will be denoted by $D$.
 Those quadrics which correspond to the zeros  of $\delta  $ are then singular quadrics with 
 nullspaces $N_i$ of dimension 1 each, and all other quadrics are smooth. 

If $Q$ and $Q'$ are two singular quadrics in a generic pencil with nullspaces $N$ and $N'$ respectively, 
then $N$ and $N'$ are orthogonal for both $Q$ and $Q'$ and hence for all quadrics of the pencil. 
Thus $V$ is the direct sum of  the nullspaces $N_i$ of singular quadrics of 
the pencil and therefore has a basis which is orthogonal for all the quadrics of the pencil. 

We will now recall some properties of a generic pencil over an algebraically 
closed field of characteristic $0$ in a vector space $V$ of even dimension,  say $2g +2$.  \\

  \subsubsection{ Dual Family of quadrics} ~

 \begin{lemma}
 Assume given a generic pencil on $V$, and that $H$ is any hyperplane in $V$.
 Then the generic quadric in the induced pencil on $H$  is non-degenerate.
 \end{lemma}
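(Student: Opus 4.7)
The plan is to fix a simultaneous diagonalizing basis for the pencil, write down the discriminant of the induced pencil on $H$ as an explicit polynomial in the parameter $t$, and check it is not identically zero by plugging in suitable values of $t$.

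First I would invoke the structural fact just recalled above the lemma: since the pencil $tQ_1 - Q_2$ is generic, the singular quadrics in it have $1$-dimensional nullspaces $N_1,\ldots,N_{2g+2}$ which are mutually orthogonal for every member of the pencil, so $V$ decomposes as $V = N_1\oplus\cdots\oplus N_{2g+2}$ and admits a basis $e_1,\ldots,e_{2g+2}$ (with $e_i\in N_i$) simultaneously diagonalizing the entire pencil. In this basis $Q_t$ is represented by the diagonal matrix $\mathrm{diag}(\mu_1(t-\lambda_1),\ldots,\mu_{2g+2}(t-\lambda_{2g+2}))$ for scalars $\mu_i\in k^\times$ and the (distinct) roots $\lambda_i$ of the discriminant $\delta$; after rescaling basis vectors one may take $\mu_i=1$.

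Next, write the hyperplane $H$ as the zero locus of a linear form $\ell(x)=\sum a_i x_i$ with $(a_1,\ldots,a_{2g+2})\neq 0$. For values of $t$ avoiding the $\lambda_i$, $Q_t$ is non-degenerate, and the $Q_t$-polar of $H$ is the line spanned by $v_t=\sum_i \tfrac{a_i}{t-\lambda_i}e_i$. The restriction $Q_t|_H$ is degenerate iff $v_t\in H$, i.e.\ iff $\ell(v_t)=\sum_i \tfrac{a_i^2}{t-\lambda_i}=0$. Clearing denominators, the discriminant of the induced pencil on $H$ is (up to a non-zero constant) the polynomial
\[
\Delta_H(t)\;=\;\sum_{i=1}^{2g+2} a_i^{2}\prod_{j\neq i}(t-\lambda_j),
\]
which has degree at most $2g+1$.

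Finally I would show $\Delta_H\not\equiv 0$. Evaluating at $t=\lambda_k$ gives $\Delta_H(\lambda_k)=a_k^{2}\prod_{j\neq k}(\lambda_k-\lambda_j)$; since the $\lambda_j$ are distinct, the product is non-zero, so $\Delta_H(\lambda_k)=0$ exactly when $a_k=0$. If $\Delta_H$ vanished identically, every $a_k$ would be zero, contradicting $\ell\neq 0$. Hence $\Delta_H$ is a non-zero polynomial in $t$, so for all but finitely many $t\in P$ the restricted quadric $Q_t|_H$ is non-degenerate, proving the lemma. The only mild subtlety is handling the special values $t=\lambda_i$ in the polar-line formula, which is why I clear denominators before evaluating; beyond that the argument is essentially a direct computation in the simultaneous orthogonal basis.
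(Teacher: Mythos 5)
Your proof is correct. The paper's own argument is shorter and coordinate-free: since $V$ is the direct sum of the nullspaces $N_i$ of the singular quadrics of the pencil, the hyperplane $H$ must fail to contain some $N=N_k$; the singular quadric $Q_{\lambda_k}$ descends to a non-degenerate form on $V/N$, and the projection $H\to V/N$ is then an isomorphism of quadratic spaces, so this one member of the induced pencil is already non-degenerate and a fortiori the generic member is. Your computation is essentially the coordinate shadow of this: in the diagonalizing basis, ``$H$ does not contain $N_k$'' is exactly the condition $a_k\neq 0$, and your evaluation $\Delta_H(\lambda_k)=a_k^{2}\prod_{j\neq k}(\lambda_k-\lambda_j)$ is precisely the statement that the restriction of the $k$-th singular quadric to $H$ is non-degenerate in that case. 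What your route buys is an explicit formula for the discriminant of the restricted pencil, which is genuinely useful (the paper's subsequent remark on the dual family of quadrics relies on this discriminant being a non-zero section of $\OO(2g+1)$); what it costs is the bookkeeping you flag yourself: the polar-line criterion is valid only at non-singular values of $t$, so you must clear denominators before evaluating at $t=\lambda_k$, and strictly you use $\Delta_H(\lambda_k)\neq 0$ only to conclude $\Delta_H\not\equiv 0$, then return to non-singular $t$ to obtain non-degeneracy of $Q_t|_H$ there. One point worth stating explicitly: the simultaneous diagonalizing basis exists only over the separable closure, but non-degeneracy of the generic member of the pencil is insensitive to base change, so this is harmless.
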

 
 \begin{proof}
 Since $H$ cannot contain all the nullspaces of the pencil, let us assume that some nullspace $N$ is not a subspace of $H$. 
 The corresponding singular quadric factors through a non-degenerate quadric in $V/N$. 
 Then $H$, along with the restriction of this quadric to $H$, maps isomorphically on the quadratic space $V/N$. 
 Hence it, and hence the generic quadric in $H$ of the induced pencil, are non-degenerate
 \end{proof}

\begin{remark}
In Equivalently, the restriction of the pencil to any hyperplane $H$ has as its discriminant 
a non-zero section of $\OO(2g +1)$. On identifying $V^*$ with $\Lambda ^{2g +1}(V)$, 
the $(2g +1)$-th exterior map $\Lambda ^{2g + 1}(V) \to \Lambda ^{2g +1 }(V)^*\otimes \OO(2g +1)$
gives rise to a quadratic map $V^* \to \Gamma (P, \OO(2g +1))$. In other words, this
 gives a dual family of quadrics, parametrised
by $\Gamma (P, \OO(2g +1))^*$.   The restriction map $\Gamma(P, \OO(2g +1)) \to \Gamma (D, \OO(2g +1))$ 
is an isomorphism 
since both have dimension $2g +2$ and the kernel is a section of $\OO(-1) = 0$
Thus $\Gamma (P, \OO(2g +1))^*$ can be identified with the direct sum of the fibres at $\pi w$ 
of the line bundle  $\OO(2g +1)$ on evaluation. Hence  we have  a family of diagonal 
quadrics with respect to the dual decomposition $V^*\simeq \Sigma N_w^*$. 
The space $\otimes ^2(N_w)^*$ is mapped as above isomorphically on the 
1-dimensional subspace  of sections of $\OO(2g +1)$ which vanish at all points of 
$W$ other than $w$, or what is the same, the fibre at $w$ of $\OO(2g +1)$. 
If $s \in V^*$ the corresponding section of $\OO(2g +1)$ evaluated at $w$, 
is simply the image of $s | N_w$ under the above map.In other words, 
we have a canonical quadratic map $V^* \to \OO(2g +1))|{\pi w}$ for every $w\in W$.
\end{remark}   

\vskip 5mm

  \subsubsection{ Isotropic subspaces for the pencil} ~

 A subspace of $V$ is said to be {\it isotropic for the pencil} if it is isotropic  for all quadrics of the pencil.
The variety of $r$-dimensional subspaces of $V$ isotropic to the pencil, will be denoted $I_r$. 
As stated above, all quadrics of the pencil are linear combinations of any two quadrics $Q_1$
 and $Q_2$ (both of which we may assume non-degenerate). Obviously, 
 a  subspace is isotropic for the pencil  if and only if it is so with respect to 
 $Q_1$ and $Q_2$. With reference to a basis $S = \{e_i\}$, as mentioned above, 
 he  two quadrics may be put in the form $\Sigma x_i^2 = 0$ and $\Sigma a_ix_i^2 = 0$,
  where all the $a_i$ are distinct and non-zero. 

The space consisting of pairs $(Q, c)$ where $Q$ is the quadric of the pencil and $c$ is a 
connected component of the space of maximal isotropic spaces for $Q$, is a 2-sheeted covering of $P$  
ramified over $D$ and may be  identified with an hyper-elliptic curve $C$ of genus $g$. 
The set $W$ in $C$ corresponds to the set of singular quadrics, and hence to the set of their nullspaces, 
and we may as well write $V = \sum N_w, w\in W$. Moreover, if $V' = \sum N_w^2$, the quadratic forms 
of the pencil are obtained as linear forms on $V'$ composed with the natural quadratic map $V\to V'$.   

There does not exist any $(g+1)$-dimensional subspace isotropic for the pencil. Such a subspace, 
being a maximal isotropic space for every singular quadric of the pencil, has as its image modulo the 
nullspace of the quadric, an isotropic subspace of a non-degenerate quadric in a $(2g + 1)$-dimensional 
space and hence has dimension $\leq g$. In other words, it contains the nullspace of every singular quadric.  
Thus it has to contain all the basis elements, which is absurd.\\

 \subsubsection{ Automorphisms of the pencil}~
 
 The group $G$ of automorphisms which leave both the quadratic forms invariant,
  modulo $\pm Id$ is $(\Z/2)^{2g +1}$, namely those which act on the spaces $N_w$ by $\pm 1$.

Let $W'$ be a proper subset of the basis set $W$, of even cardinality $2p$, and $\sigma $ 
an involution of $W(\eta )$ which acts as $+Id$ on  all the $N_i, i\in W'$ and as $-Id$ on 
all $N_j, j\in W\setminus W'$. Then $\sigma $ acts on the set of $g$-dimensional isotropic 
subspaces for the pencil, without fixed points. For, if $L$ is a subspace isotropic for the pencil 
and invariant under $\sigma $, then $L$ is the direct sum of $L^+$ and $L^-$, namely the intersection 
of $L$ with the eigenspaces of $\sigma $. Since $L^+$ and $L^-$ are isotropic subspaces for 
generic pencils in spaces of dimensions $2p$ and $2g + 2- 2p$ respectively, we have 
$dim L^+ \leq p-1$ and $dim L^- \leq g - p$ proving that $dim L \leq g-1$.  \\

 \subsubsection{ Morphism $I_g \to J^{2g +1}$} ~

 If $A$ is a $g$-dimensional subspace of $V$ isotropic for the pencil, then for every $c\in C$, 
 we have a unique $(g+1)$-dimensional subspace $E_c$, which is  isotropic for the quadric $Q$ defined 
 by $\pi c \in P$, contains  $A$ and belongs to the component $(Q,c)$. This defines a morphism of $C$ into 
 $P(V/A)$. The image of $C$ is not contained in any hyperplane of $V/A$, namely, any hyperplane $H$ in 
 $V$ containing $A$. For, the restriction of the pencil to the hyperplane $H$  in $V$ has smooth quadrics in 
 general, and equivalently the restriction $H\to H^*\otimes \OO(1)$ has a nonzero section of $\OO(2g +1)$
  as its discriminant as we have seen above. Thus $c\in C$ is a point of this hyperplane if and only if the quadric $Q_c$ 
  corresponding to $c$ intersects $H$ in a quadric which contains a $g+1$-dimensional isotropic subspace containing $A$, 
  namely the inverse image of $\{c\}$ under the map $H \to H/A$. This is possible 
if and only if $Q_c \cap H$  is singular. Hence the mapping $C\to P(V/A)$ pulls back the line bundle $\OO(1)$ 
to a line bundle $\zeta $ on $C$  of degree $2g + 1$. The induced linear map $(V/A)^*\to \Gamma (C, \zeta )$
 is injective and hence (both being of dimension $g + 2$), an isomorphism. Thus to any $g$-dimensional 
 subspace $V$ isotropic for the pencil, we have associated a line bundle $\zeta  $ of degree $2g + 1$ on $C$ 
 with an isomorphism of $\Gamma (C, \zeta )$ with $(V/A)^*$. We denote this morphism $I_g \to J^{2g + 1}$ by $f$. 

Since the inverse image of $w\in C$ in $V$ under the map $V \to V/A$ is the (direct) sum of $N_w$ and $A$,
 it follows that $N_w$ maps isomorphically to $(\zeta ^*)_w$ under the natural map $V \to V/A$.  
 These isomorphisms determine the surjection $V\to V/A$.

We have seen above that there is a natural quadratic map $q: V^* \to \Gamma (P, {\OO}(2g +1))$. 
This takes non-zero vectors to non-zero 
vectors and so gives a morphism of the corresponding projective spaces. Indeed, it is clear that it induces 
an isomorphism of the quotient of $P(V)$ by the finite group $(\Z/2)^{2g +2}$ which acts as $\pm 1$ on $N_i$, 
with $P\Gamma (P, \OO(2g +1))$.

\begin{remark} We have essentially shown that there is a natural isomorphism $\sum
 N_w^2\simeq  (\Gamma(P, \OO(2g +1)))^*$. Besides, it is easy to check that the
  pencil is given by the linear map $U = \Gamma (P, \OO(1))^* \to \Gamma (P, \OO(2g +1))$ given rise to, by $s$.
  \end{remark}

The map $q$ composed with the inclusion $\Gamma (C, \zeta) = (V/A)^* \to V^*$, yields a quadratic map 
$\Gamma (C, \zeta ) \to \Gamma (P, \OO(2g +1))$. On the other hand, we have also another natural 
quadratic map $\Gamma (C, \zeta ) \to \Gamma (P, \OO(2g +1))$ given by $\sigma \mapsto \sigma \otimes
\iota^*(\sigma)$, noting that this image is an $\iota $-invariant section of $h^{2g + 1}$, and hence gives rise to a 
section $\pi s$ of $\OO(2g + 1)$.  Both of these quadratic maps take non-zero vectors into non-zero vectors.
The morphism $q$ maps any $s \in V^*\neq 0$, to a section 0f $\OO(2g + 1)$ whose zeros correspond to 
the singular quadrics of the pencil restricted to the hyperplane $s=0$.  If $s$ belongs to $\Gamma (C, \zeta )$, 
then $qs$ is the same as the section of $\OO(2g + 1)$, the divisor of whose zeros is the image in $P$ of the
 divisor of zeros of $s$ in $C$. We have just seen that these correspond to the singular quadrics in the 
 hyperplane $s=0$ in $V$. Hence $q$ restricted to $\Gamma (C, \zeta )$ is the map $s\to \pi (s)$, up to a scalar. 
 Modifying the map $V \to \Gamma (C, \zeta )^*$ by a scalar, we may assume that the restriction of $q$ to 
 $\Gamma (C, \zeta )$ is $\pi $. In particular, the isomorphism  $N_w \to \zeta _w^*$  induces an isomorphism 
 of $N_w^2$ with $\zeta _w^{-2}$, which depends only on $\zeta $.

Let $A_1$ and $A_2$ be two elements of $I_g$, with $f(A_1) = f(A_2) = \zeta $. 
We have then two surjections of $V$ onto $(\Gamma (\zeta ))^*$  or transposing, two inclusions 
$\Gamma (C, \zeta)$ in $V^*$, whose compositions with $q$ are the same. 
This implies that we have an automorphism $\varphi $ of $V$ which act as $\pm 1$ on 
$N_w$'s taking $A_1$ to $A_2$. Moreover, its determinant is $1$ since it preserves the 
component corresponding to $c$ on any non-degenerate quadric of the pencil. 

\begin{prop}
Any two elements of $I_g$ with the same image under $f$ are taken to one another by a transformation of 
$V$ which is $Id$ or $-Id$ on an even number of $N_i$'s.
\end{prop}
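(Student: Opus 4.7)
The plan is to complete the argument sketched in the paragraph immediately preceding the statement. Given $A_1, A_2 \in I_g$ with $f(A_1) = f(A_2) = \zeta$, the construction of $f$ already supplies two surjections $V \twoheadrightarrow \Gamma(C,\zeta)^*$, equivalently two inclusions $j_1, j_2 : \Gamma(C,\zeta) \hookrightarrow V^*$. These are normalized so that the compositions $q \circ j_i : \Gamma(C,\zeta) \to \Gamma(P, \OO(2g+1))$ with the canonical quadratic map $q$ both equal $\sigma \mapsto \pi(\sigma \otimes \iota^*\sigma)$, a map that involves only $\zeta$ and not the choice of $A_i$. Hence $q \circ j_1 = q \circ j_2$.

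Next I would extract the signs. Under the diagonal decomposition $V^* = \bigoplus_w N_w^*$, the map $q$ restricts on each $N_w^*$ to a squaring isomorphism $(N_w^*)^{\otimes 2} \xrightarrow{\sim} \OO(2g+1)_{\pi w}$, so the $N_w^*$-components of $j_1$ and $j_2$ must differ by a sign $\varepsilon_w \in \{\pm 1\}$. Transposing, these signs assemble into an automorphism $\varphi : V \to V$, diagonal in the decomposition $V = \bigoplus_w N_w$ with $\varphi|_{N_w} = \varepsilon_w \cdot \mathrm{id}$. The two surjections $V \twoheadrightarrow \Gamma(C,\zeta)^*$ are then related by precomposition with $\varphi$, which forces $\varphi(A_1) = A_2$.

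It remains to show $\det \varphi = +1$. For every non-degenerate quadric $Q_c$ of the pencil, the subspaces $A_1$ and $A_2$ extend respectively to unique $(g+1)$-dimensional isotropic subspaces $E_c^{(1)}, E_c^{(2)}$, and the connected component of the space of maximal isotropics containing $E_c^{(i)}$ is determined by the label $c \in C$ attached via the morphism $C \to P(V/A_i)$ defined by $\zeta$; this labelling is intrinsic to $\zeta$ and hence agrees for $i = 1, 2$. Thus $\varphi$ carries $E_c^{(1)}$ to $E_c^{(2)}$ inside the same component for each smooth $Q_c$, so $\det \varphi = +1$. Since $\det \varphi = \prod_w \varepsilon_w$, the number of indices $w$ on which $\varphi$ acts by $-\mathrm{id}$ is even (equivalently, because $|W| = 2g+2$ is even, the number on which it acts by $+\mathrm{id}$ is also even), which is the stated conclusion.

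The only subtle point I foresee is the sign-tracking in the middle paragraph: I must verify that once the normalization $q \circ j_i = \pi$ is imposed, no residual freedom remains beyond the signs $\varepsilon_w$, one per $N_w$. This rigidity is essentially built into the construction of $f$ and the quadratic map $q$, so once it is recorded explicitly the determinant computation and the parity conclusion are immediate.
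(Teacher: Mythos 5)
Your proof is correct and takes essentially the same route as the paper, whose own argument is precisely the sketch in the paragraph preceding the proposition: the two inclusions of $\Gamma(C,\zeta)$ into $V^*$ have equal composition with $q$, forcing a diagonal $\pm 1$ automorphism $\varphi$ carrying $A_1$ to $A_2$, with $\det\varphi=1$ because $\varphi$ preserves the component labelled by $c$ on every non-degenerate quadric of the pencil. The details you supply (the per-$N_w$ sign rigidity and the intrinsic labelling of components via the embedding $C\to P(V/A_i)$) correctly fill in what the paper leaves implicit.
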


\vskip 3mm

 \subsection{Pencil of quadrics on $\Gamma (W, \eta )$}~
\label{pencil-eta}

Let $C$ be defined over any field $k$ of characteristic $\neq 2$. We will assume that $C$
 has a rational point $x$. Hence we have a 
$k$-rational line bundle $\eta $ of degree $2g -1$, for example 
$h^g \otimes \OO(-x)$. Let $\xi = h \otimes \iota ^*(\eta )$.

We denote $\Gamma (W, h^{2g - 1})$ by $V'$. There is a natural tensor product map 
$V' \times \Gamma (C, h) \to \Gamma (W, h^{2g})$. 
Composing this with the boundary homomorphism $\Gamma (W, h^{2g}) \to H^1(C, K_C) = k$, 
we get a homomorphism $\Gamma (C, h) \to (V')^*$. The pencil we construct is as follows. 
We will give a $2g +2$-dimensional $k$-vector space $V$ with a quadratic map $V\to V'$. 
Composing this with the  the above family of linear forms on $V'$, parametrised by $\Gamma (C, h)$,  
we get a pencil of quadratic forms on $V$. If $\eta  $ is a rational line bundle of degree $2g -1$ as above, 
we take $V$ to be $\Gamma (W, \eta ) \simeq \Gamma (W, \xi )^*$ and the 
quadratic map $V \to V'$ to be $s\mapsto s \otimes \iota ^*(s)$.    

By going over to the algebraic closure, one sees that the discriminant of the above 
pencil vanishes only for those sections of $h$ which vanish
at a point of $W$ with multiplicity $1$. Thus we have a generic pencil 
whose discriminant is $s$ (up to a non-zero scalar).  

We now have the rather obvious 
 
\begin{remark}~

\begin{enumerate}
\item[1)]{If the degree $d$ of any line bundle $\zeta $ on $C$ is less than
$2g + 2$, the restriction map $\Gamma (C, \zeta )\to W(\zeta )$ is injective, since its kernel is
 $\Gamma (C, \zeta \otimes {\OO}(-W)) = \Gamma (C, \zeta \otimes h^{-g-1}) = 0$, the degree 
  of $\zeta \otimes h^{-g-1}= d-2g -2$ being negative by assumption.} \\
  
\item[2)]{Also, if  $d$ is greater than $2g - 2$, then $H^1(C, \zeta ) = 0 $. 
Hence if $2g-2< deg(\zeta) < 2g +2$, we may associate to $\zeta $,
 a $(d+1-g)$-dimensional subspace $C(\zeta )$ of $W(\zeta )$, 
 namely the image of $\Gamma (C, \zeta )$ in $\Gamma (W, \zeta )$. 
 Moreover, if $d > 2g -1 $, then $\zeta $ is the pull-back of the bundle 
 ${\OO}(1)$ by the natural map of $C$ into $P\Gamma (C, \zeta )^* = PC(\zeta )^*$.}
 \end{enumerate}
\end{remark}

\begin{remark}~

\begin{enumerate}
\item[1)] {The assumption that there is a rational point in $C$ implies that  all the Jacobians $J^d$ for all $d$,
 may be identified using tensorisation by $\eta $ and/or  a suitable power of $h$.}\\
 
\item[2)] {There is a canonical Poincar{\'e} bundle on $J^g$, namely the inverse image of the $\Theta $ divisor
in $J^{g-1}$ by the map $(\zeta , x) \to \zeta \otimes {\OO}(-x)$ of $J^g \times C \to J^{g-1}$. 
Using the above remark,  we may assume that there is a Poincar{\'e} bundle in any degree $d$.} 
 \end{enumerate}
 \end{remark}

The quadratic map $W(\eta ) \to W(h^{2g-1})$  when restricted to $C(\eta )$ factors through 
$\Gamma (C, h^{2g-1})$. Hence on tensoring with
$\Gamma (h)$ the restriction of the pencil to $W(\eta )$ factors through 
$\Gamma (C, h^{2g}) \to  \Gamma (W, h^{2g}) $ and hence goes to 0 
under the boundary homomorphism $W(h^{2g}) \to H^1(C, h^{g-1})$ by exactness. 

Thus we have associated to each line bundle $\eta $ of degree $2g -1 $  over $k$,
\begin{enumerate}
\item[1)]  {a pencil of quadrics on $W(\eta )$ parametrised by $P$, and}  \\

\item[2)] {a $k$-subspace $C(\eta )$ of $W(\eta )$ of dimension $g$ to which the restrictions of all quadrics of the pencil are 0.} \\

\item[3)] {To  any rational point $c\in C$ corresponds  a maximal isotropic $k$-subspace for the quadric determined by a section of $h$
which vanishes at $c$ and $\iota c$ and contains the above subspace $C(\eta )$.
  }
\end{enumerate}

\vskip 5mm

\subsection{ Jacobian as the space of isotropic subspaces of the pencil}~

Let $\eta $ be a $k$-rational line bundle of degree $2g -1$ and 
$\xi = h\otimes \iota ^*(\eta )$.  We will denote by $J$, the Jacobian of any degree and by $P$, a Poincar{\'e} 
bundle on $J\times C$.
Let $Q$ be the line bundle $P\otimes (1\times \iota )^*(P)^{-1}$ on $J \times C$. Clearly the restriction of 
$Q$ to $J\times W$ is trivial. The direct image $E$ on $J$ of $Q\otimes p_C^*(\eta )$ (resp. $F$ of $Q\otimes p_C^*(\xi )$)
 is a vector bundle of rank $g$ (resp. $g + 2)$, while the direct image on $J$ of the restriction of $Q\otimes p_C^*(\eta )$ to 
 $J\times W$, is a trivial vector bundle of dimension $2g +2$ with fibre $\Gamma (W, \eta )$. We have now (denoting 
 $\alpha \otimes \iota ^*(\alpha )^{-1}\otimes \eta $ by $L(\alpha )$, $\alpha \in J$) the exact sequence
$$ 0\to \Gamma (C, L(\alpha )) \to \Gamma (W, \eta ) \to H^1( C, h^{-g -1}\otimes L(\alpha )) \to 0.$$
Since $L(\alpha ) \otimes \iota ^*(L(\alpha ))$ is canonically isomorphic to $h^{2g -1}$, we have the exact sequence 
$$0\to E \to \Gamma(W, \eta ) \otimes {\OO} \to F^* \to 0$$
on $J$ with $E_{\alpha } = \Gamma(C, L(\alpha ))$ and $F_{\alpha } = \Gamma (C, h\otimes \iota ^* L({\alpha }))$.
 
In particular, this defines a morphism $S$ of $J$ into the $g$-dimensional Grassmannian of $W(\eta )$. 
Moreover, the group $J_2$ consisting of of line bundles $\alpha $ with $\alpha ^2 $ trivial, acts on $J$. Since, for each such
element we have an isomorphism $\alpha \simeq \iota ^*\alpha $ (which, we may assume has Id as its square)
  it associates $\pm 1$ for every $w\in W$ (in the algebraic closure). This gives an isomorphism of $J_2$ with 
the group of automorphisms of the pencil modulo $\pm Id$. 

\begin{lemma}The map $S: \alpha \mapsto \Gamma (W, L(\alpha )) \subset
W(\eta )$ of $J$ into the Grassmannian of $g$-dimensional subspaces of $W(\eta )$ is compatible with the action of $J_2$ on $J$ and the action of the automorphism group of the pencil on the Grassmannian.
\end{lemma}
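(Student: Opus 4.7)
The plan is to trace through the definitions of the two actions and show that they match on the nose when compared via the map $S$. Fix $\alpha \in J$ and $\alpha' \in J_2$, and set $\beta := \alpha \otimes \alpha'$. I want to identify $S(\beta) \subset W(\eta)$ with the image of $S(\alpha)$ under the automorphism of $W(\eta)$ corresponding to $\alpha'$.

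First I would unwind $L(\beta)$. Since $L(\gamma) = \gamma \otimes \iota^*(\gamma)^{-1}\otimes \eta$, a direct computation gives
\[
L(\beta) \;=\; L(\alpha)\otimes \bigl(\alpha'\otimes \iota^*(\alpha')^{-1}\bigr).
\]
The 2-torsion hypothesis $(\alpha')^2 \simeq \OO$ provides an isomorphism $\varphi:\alpha' \isomto \iota^*\alpha'$ whose square is the identity (this is exactly how the identification of $J_2$ with the group of pencil automorphisms modulo $\pm\id$ is set up in the paragraph preceding the lemma). This $\varphi$ yields a trivialization $\tau(\varphi):\alpha'\otimes \iota^*(\alpha')^{-1}\isomto \OO_C$, hence a global isomorphism $\Phi:L(\beta)\isomto L(\alpha)$. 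In particular $\Phi$ induces a $k$-linear isomorphism $\Gamma(C, L(\beta))\isomto \Gamma(C, L(\alpha))$.

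The key point is to compare the restriction maps to $W$. Because $\iota$ acts trivially on each ramification point $w$, one has a canonical identification $(\iota^*\alpha')_w = (\alpha')_w$, so the fiber of $\varphi$ at $w$ is an automorphism of $(\alpha')_w$, necessarily multiplication by some $\epsilon_w \in \{\pm 1\}$. The trivialization $\tau(\varphi)$ restricted to $w$ is then $\epsilon_w$ times the tautological trivialization of $(\alpha')_w\otimes (\alpha')_w^{-1}$. Consequently, if $T_{\alpha'}$ denotes the diagonal automorphism of $W(\eta) = \bigoplus_{w\in W} \eta_w$ which multiplies the $w$-th summand by $\epsilon_w$, then the square
\[
\begin{array}{ccc}
\Gamma(C, L(\beta)) & \xrightarrow{\ \Phi\ } & \Gamma(C, L(\alpha)) \\
\text{res}_W \Big\downarrow & & \Big\downarrow \text{res}_W \\
W(\eta) & \xrightarrow{\ T_{\alpha'}\ } & W(\eta)
\end{array}
\]
commutes, where we have used the canonical identifications $L(\beta)|_W \simeq \eta|_W \simeq L(\alpha)|_W$ coming from trivializing $\alpha'\otimes\iota^*(\alpha')^{-1}$ on $W$. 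This gives $T_{\alpha'}\bigl(S(\beta)\bigr) = S(\alpha)$, equivalently $S(\alpha\otimes\alpha') = T_{\alpha'}^{-1} S(\alpha) = T_{\alpha'} S(\alpha)$, which is the desired equivariance since $T_{\alpha'}$ is precisely the automorphism of $W(\eta)$ attached to $\alpha'$ under the identification $J_2 \simeq \AAut_Q^+$ recalled above.

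The main obstacle is the bookkeeping in the last paragraph: one has to verify carefully that the sign sequence $(\epsilon_w)_w$ extracted from $\varphi$ matches the automorphism of the pencil attached to $\alpha'$ in the discussion preceding the lemma, and that the two \emph{a priori} distinct trivializations of $L(\beta)|_W$ and $L(\alpha)|_W$ used when writing down $S$ are compatible with the isomorphism $\Phi$ up to exactly this sign sequence. Everything else is a formal chase of the definitions of $L$, $S$, and the $J_2$-action.
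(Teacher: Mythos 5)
Your argument is correct and is essentially the paper's own proof, written out in full: the paper disposes of the lemma in one sentence by appealing to the identification of $J_2$ with the automorphism group of the pencil via the signs $\pm 1$ that the involution $\varphi:\alpha'\isomto\iota^*\alpha'$ induces at each point of $W$, which is exactly the sign sequence $(\epsilon_w)$ you extract and feed into the commuting square. Your more careful bookkeeping of the trivializations of $L(\cdot)|_W$ is a welcome expansion but not a different route.
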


\begin{proof} In fact, it is a consequence of the isomorphism of $J_2$ with the automorphism group, which uses the fact that
each $\alpha \in J_2$ corresponds to an automorphism of $W(\eta )$ which acts by $\pm 1$ on the basis of $W(\eta )$.
\end{proof}

We claim that this  map $S$ is actually into the space $I_g$ of $g$-dimensional subspaces which are isotropic for all
the quadrics of the pencil defined above. In order to show this, for simplicity we will extend the above consideration 
to the algebraic closure of $k$ and prove it there. 

Now we restate our construction in this set-up. If $\alpha $ is a line bundle, associate to it the line bundle 
$L(\alpha ) =  \alpha \otimes \iota ^*(\alpha)^{-1}\otimes \eta $ on $C$. Then $L(\alpha ) = L(\beta)$ if and only if $j =
\alpha ^{-1}\otimes \beta $
satisfies $j =\iota ^*(j)$, or what is the same, $j\in J_2$.  In other words, $L:J \to J^{2g -1}$ is a Galois covering with $J_2$ 
as Galois group. 
Consider then the restriction  $\Gamma (C, L(\alpha )) \to W(L(\alpha ))$, which is injective by 5.1. 
Since $\iota $ acts as Id on $W$, it follows that $W(L(\alpha ))$ is the same as $W(\eta )$.  
Hence $C(L(\alpha ))$ is a $g$-dimensional subspace $S(\alpha )$ of $V = W(\eta )$. 

Even if $L(\alpha )$ is isomorphic to $L(\beta )$, it does not imply that the two subspaces $S(\alpha )$ 
and $S(\beta )$ are the same, since
the isomorphism of  $L(\alpha )$  with $L(\beta )$  need not restrict to identity on $W$. 
 This restriction is the same as the restriction of the isomorphism $j\to \iota(j)$ to $W$. 
 Assuming the isomorphism has square $Id$,  it is unique up to $\pm 1$. It acts on $W$ as $+1$ on 
a subset of $W$ of even cardinality and as $-1$ on the complement. See [3, Lemma 2.1].
Thus we obtain an isomorphism of $J_2$ with a subgroup of automorphisms of $W(\eta )$
respecting the pencil of quadrics, modulo $\pm Id$.
Consequently, $J_2$ acts on $I_d$, for all $d$, in particular on $I_g$. Moreover,  
the morphism $S$ commutes with the action of $J_2$ on $J$ and  that on $I_g$. 
 
We have the tensor product  map
$$\Gamma (C, h)\otimes \Gamma (C, L(\alpha ))\otimes \Gamma (C,\iota ^*L(\alpha )) \to \Gamma(C, h^{2g})$$
on the one hand, and 
$$\Gamma (C, h)\otimes W(\eta )\otimes W(\eta ) \to W(h^{2g})$$
on the other. Note that the induced pencil on $S(\alpha )$ is given by 
$\Gamma (C, h) \otimes S(\alpha ) \otimes S(\alpha ) \to k$. It is therefore clear that this factors through $C(h^{2g})$. 
Thanks to  the exact sequence mentioned above, this is therefore 0 and hence 
$S(\alpha )$ is isotropic for all quadrics of the pencil. 
Thus we have associated to each $\alpha \in J$, a $g$-dimensional subspace $S(\alpha )$ of $W(\eta )$
 isotropic for the pencil and thus given a morphism of $J^g$ into $I_g$.  Hence we have the

\begin{prop}
\label{the-maps-s} The map $\alpha  \to S(\alpha )$ is a $k$-rational morphism of $J$ into $I_g$ which respects the action of $J_2$.
\end{prop}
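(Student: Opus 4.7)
The plan is to package together the constructions that have just been carried out. First, I would spell out why $S$ is a $k$-rational morphism: since $C(k) \neq \emptyset$, there is a Poincaré bundle $P$ on $J \times C$ defined over $k$, so $Q = P \otimes (1 \times \iota)^*(P)^{-1}$ is defined over $k$ and its restriction to $J \times W$ is (canonically) trivial. The two pushforwards $E = (p_J)_*(Q \otimes p_C^*(\eta))$ and $F = (p_J)_*(Q \otimes p_C^*(\xi))$ are locally free $k$-vector bundles of ranks $g$ and $g+2$, and the relative version of the exact sequence
\[
0 \to \Gamma(C, L(\alpha)) \to \Gamma(W, \eta) \to H^1(C, h^{-g-1} \otimes L(\alpha)) \to 0
\]
globalises to the short exact sequence $0 \to E \to \Gamma(W, \eta) \otimes \OO_J \to F^* \to 0$ over $k$. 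The classifying map of the subbundle $E \subset \Gamma(W, \eta) \otimes \OO_J$ is precisely $S$, and it is a $k$-morphism into $\Gr(g, W(\eta))$.

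Next I would verify that the image of $S$ lies in $I_g$. On geometric points this can be done over $\bar k$, where one knows the pencil $V \to V'$ is given by $s \mapsto s \otimes \iota^*(s) \in W(h^{2g-1})$, paired with sections of $h$ and followed by the boundary $W(h^{2g}) \to H^1(C, h^{g-1}) = k$. The key point is that for $s \in S(\alpha) = C(L(\alpha))$, the product $s \otimes \iota^*(s)$ lies in the image $C(h^{2g-1})$ of $\Gamma(C, h^{2g-1})$ in $W(h^{2g-1})$, and tensoring with a section of $h$ keeps us in $C(h^{2g})$; by exactness of
\[
\Gamma(C, h^{2g}) \to W(h^{2g}) \to H^1(C, h^{g-1}) \to 0,
\]
the pairing against every quadric of the pencil vanishes on $S(\alpha)$. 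Because the argument is intrinsic in the line bundle data and the definition of the pencil, it descends to $k$.

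Finally, for $J_2$-equivariance I would invoke the discussion at the end of Section 5.5. A $2$-torsion line bundle $j$ comes with a canonical square-one isomorphism $j \simeq \iota^*(j)$ up to sign, which induces an automorphism of $W(\eta)$ acting by $\pm 1$ on each summand $N_w$; this is exactly the automorphism of the pencil attached to $j$ under the identification of $J_2$ with $\AAut^+_{Q}/\{\pm 1\}$ given by $\theta_Q$ (cf.\ Proposition \ref{autpic}). The functoriality of the construction $\alpha \mapsto L(\alpha) \mapsto \Gamma(C, L(\alpha)) \hookrightarrow W(\eta)$ under tensoring by $j$ translates $S(\alpha \otimes j)$ into the image of $S(\alpha)$ under this automorphism; this is the equivariance claimed.

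I expect the main obstacle to be making the isotropy verification fully rigorous over $k$, since the description of the pencil was cleanest over $\bar k$ with a basis diagonalising both quadrics. The cleanest way around this is to express the whole pencil and the factorisation $V \to V' \to k$ using the canonical $k$-rational constructions (the quadratic map $s \mapsto s \otimes \iota^*(s)$, the restriction $W(h^{2g-1}) \to W(h^{2g})$ by a section of $h$, and the boundary $W(h^{2g}) \to H^1(C, h^{g-1})$), so that the whole factorisation happens on $k$-objects and no passage to $\bar k$ is needed. The rest of the proposition then reduces to rationality of $\eta$ and of the Poincaré bundle.
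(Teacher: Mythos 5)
Your proposal is correct and follows essentially the same route as the paper: $k$-rationality via the Poincar\'e bundle and the relative exact sequence $0 \to E \to \Gamma(W,\eta)\otimes\OO_J \to F^* \to 0$ realising $S$ as the classifying map of the subbundle $E$; isotropy by observing that $s\otimes\iota^*(s)$ for $s$ a global section of $L(\alpha)$ lands in $C(h^{2g-1})$ and hence dies under the boundary map $W(h^{2g})\to H^1(C,h^{g-1})$; and $J_2$-equivariance from the identification of $J_2$ with the automorphism group of the pencil acting by $\pm1$ on the summands $N_w$. Your closing remark that the isotropy computation can be kept entirely over $k$ is a mild refinement of the paper, which passes to $\bar k$ ``for simplicity,'' but it is not a different argument.
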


 \begin{theorem}
  The morphism $S$ of $J^g$ into $I_g$ is an isomorphism.
  \end{theorem}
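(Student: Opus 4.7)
The plan is to show that $S\colon J \to I_g$ is a bijective morphism between smooth projective $k$-varieties of the same dimension $g$, whence an isomorphism. Both sides have dimension $g$: $J = J^g$ is an abelian variety of dimension $g$, while the geometric irreducibility, smoothness, and dimension of $I_g$ can be read off from the simultaneous diagonalization of a generic pencil. The morphism $S$ is already shown to be $k$-rational and $J_2$-equivariant in Proposition~\ref{the-maps-s}, and bijectivity (as well as the isomorphism claim) can be checked after passing to the algebraic closure.

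The central step is to identify the composition $f\circ S\colon J\to J^{2g+1}$ with the $J_2$-Galois cover $L\colon J \to J^{2g-1}$, modulo the canonical identification $J^{2g+1}\simeq J^{2g-1}$ by $\otimes h^{-1}$. For $A = S(\alpha) = C(L(\alpha))$ with $L(\alpha) = \alpha\otimes \iota^{\ast}(\alpha)^{-1}\otimes\eta$, the defining exact sequence
\[
 0 \to C(L(\alpha)) \to W(\eta) \to \Gamma(C,\, h\otimes \iota^{\ast}L(\alpha))^{\ast} \to 0
\]
identifies $V/A$ with $\Gamma(C,\, h\otimes \iota^{\ast}L(\alpha))^{\ast}$; unpacking the geometric recipe defining $f$ (via the unique $(g+1)$-dimensional isotropic extension of $A$ inside each quadric $Q_c$, lying in the component indexed by $c$) realizes the morphism $C\to \P(V/A)$ as the one associated with the complete linear system $|h\otimes \iota^{\ast}L(\alpha)|$. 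Consequently $f\circ S(\alpha) = h\otimes \iota^{\ast}L(\alpha)$, so the induced map $J/J_2 \to J^{2g-1}$ is an isomorphism.

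Bijectivity of $S$ then follows fiberwise. The earlier claim that any two elements of $I_g$ with the same image under $f$ are related by a transformation of $V$ acting as $\pm 1$ on an even number of the $N_i$ identifies each fiber of $f$ with a torsor under the subgroup of $\AAut^+_Q$ corresponding, via $\theta_Q\colon \Pictwo C \simeq \AAut^+_Q$, to $J_2$. Since $L$ is itself a principal $J_2$-cover, $f\circ S$ is a principal $J_2$-cover through the identification above, and $S$ is $J_2$-equivariant, the restriction of $S$ to each fiber of $f\circ S$ is a $J_2$-equivariant map between free $J_2$-sets of equal cardinality, hence a bijection. A proper bijective morphism between smooth irreducible projective varieties of the same dimension is an isomorphism as soon as its differential is injective at one point; this last step reduces to a direct tangent-space comparison, using $T_\alpha J^g \cong H^1(C,\OO)$ on the Jacobian side and the normal bundle of $A$ in the base locus of the pencil on the Grassmannian side.

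The main obstacle is the explicit identification $f\circ S(\alpha) = h\otimes \iota^{\ast} L(\alpha)$: matching the $(g+1)$-dimensional isotropic extensions used to construct $f$ with the concrete description of $S(\alpha) = C(L(\alpha))\subset W(\eta)$ requires careful bookkeeping of the duality in $W$ and of the component labels of maximal isotropic spaces. Once this compatibility of Galois covers is established, bijectivity and the upgrade to an isomorphism follow formally.
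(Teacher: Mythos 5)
Your proposal follows essentially the same route as the paper: the heart of both arguments is the identification $f\circ S(\alpha)=h\otimes\iota^*(L(\alpha))$, combined with the facts that $J_2$ acts freely on $I_g$ and that any two points of $I_g$ with the same image under $f$ lie in a single $J_2$-orbit, which gives surjectivity and fiberwise bijectivity of the $J_2$-equivariant map $S$ exactly as in the text. The only divergence is the final upgrade from bijection to isomorphism, where the paper descends along the two principal $J_2$-covers of $J^{2g-1}$ while you invoke Zariski's main theorem plus a deferred tangent-space comparison; note that the injectivity of the differential of $S$ that you leave unverified already follows from your key identity, since $f\circ S$ is the \'etale covering $L$ and hence $S$ is unramified.
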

  \begin{proof}
We have given the map over $k$ assuming the existence of a rational line bundle of degree $2g -1 $.
 It is enough to show that it is an isomorphism over the algebraic closure. We have already given a morphism $f$ of $I_g$ into $J^{2g +1 }$.
In the following we will show that the composite $f \circ S: J\to J^{2g +1}$ is the map $\alpha \mapsto h\otimes \iota ^*L(\alpha )$.
\end{proof}

In the following $\eta $ is {\it any} line bundle of degree $2g -1$ and $\xi =h\otimes \iota^*(\eta )$. 

\begin{lemma}
The space $C(\xi )$ is the orthogonal complement of $C(\eta )$ under the duality.
\end{lemma}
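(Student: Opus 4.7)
The plan is to prove $C(\xi)\subseteq C(\eta)^{\perp}$ and conclude by a dimension count. First I would identify the dualities and line bundles in play. Since $\deg\eta=2g-1$ and $\deg\xi=2g+1$, the remarks in \S 2 give $\dim C(\eta)=g$ and $\dim C(\xi)=g+2$, while $\dim W(\eta)=\dim W(\xi)=2g+2$. The canonical isomorphism $\eta\otimes\iota^{*}\eta\simeq h^{2g-1}$ yields $\eta\otimes\xi=\eta\otimes h\otimes\iota^{*}\eta\simeq h^{2g}$ on all of $C$; restricted to $W$, where $\iota$ acts trivially, this gives the identification $\xi|_W\simeq (h^{2g}\otimes\eta^{-1})|_W$ that makes the duality pairing of \S 2.3 into a pairing
\[
W(\eta)\times W(\xi)\;\longrightarrow\; W(h^{2g})\;\xrightarrow{\;\partial\;}\;H^{1}(C,h^{g-1})=k,
\]
where the first arrow is pointwise multiplication of sections on $W$, and $\partial$ is the connecting homomorphism attached to the exact sequence $0\to h^{g-1}\to h^{2g}\to h^{2g}|_W\to 0$ (this uses $\OO_C(-W)\simeq h^{-g-1}$).

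Next I would check orthogonality on generators. Any $s\in C(\eta)$ is by definition of the form $\tilde{s}|_W$ for some $\tilde{s}\in\Gamma(C,\eta)$, and similarly $t=\tilde{t}|_W$ with $\tilde{t}\in\Gamma(C,\xi)$. The product $\tilde{s}\tilde{t}\in\Gamma(C,\eta\otimes\xi)=\Gamma(C,h^{2g})$ restricts on $W$ to $s\cdot t\in W(h^{2g})$. By exactness of
\[
\Gamma(C,h^{2g})\;\longrightarrow\;W(h^{2g})\;\xrightarrow{\;\partial\;}\;H^{1}(C,h^{g-1}),
\]
the class $s\cdot t$ lies in the kernel of $\partial$, so the pairing $\langle s,t\rangle=\partial(s\cdot t)$ vanishes. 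Hence $C(\xi)\subseteq C(\eta)^{\perp}$.

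Finally, a dimension count closes the argument: the duality between $W(\eta)$ and $W(\xi)$ is perfect, so $\dim C(\eta)^{\perp}=2g+2-g=g+2=\dim C(\xi)$, and the inclusion is an equality. The only delicate point is the compatibility check in the first step, namely that the identification $\xi|_W\simeq (h^{2g}\otimes\eta^{-1})|_W$ used to set up the duality on $W$ is the same as the one coming from the global isomorphism $\eta\otimes\xi\simeq h^{2g}$; this amounts to tracking the canonical isomorphism $\eta\otimes\iota^{*}\eta\simeq h^{2g-1}$ through the restriction to $W$, which is routine once one fixes that canonical isomorphism at the outset.
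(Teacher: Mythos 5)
Your proof is correct and follows essentially the same route as the paper: both show $C(\xi)\subseteq C(\eta)^{\perp}$ by lifting the product of global sections to $\Gamma(C,h^{2g})$ so that the connecting homomorphism kills it, and then conclude by the dimension count $\dim C(\eta)^{\perp}=2g+2-g=g+2=\dim C(\xi)$. Your explicit attention to the identification $\xi|_W\simeq (h^{2g}\otimes\eta^{-1})|_W$ via $\eta\otimes\iota^{*}\eta\simeq h^{2g-1}$ is a point the paper leaves implicit, but it does not change the argument.
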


\begin{proof} In fact, the duality restricted to $C(\eta ) \otimes C(\xi )$ is given by the natural map   
$\Gamma (C, \eta )\otimes \Gamma (C, \xi ) \to \Gamma (C, h^{2g})$, then restricting it to $W$ and finally composing 
it with the connecting homomorphism $W(h^{2g}) \to H^1(C, h^{g-1})$ of the exact sequence 
$$0\to h^{g-1}\to h^{2g} \to h^{2g}|W \to 0. $$
This composite is clearly $0$ in view of the exactness of the associated cohomology exact sequence. 
Thus $C(\xi )$ is contained in the orthogonal complement of $C(\eta )$. 
Since $C(\eta )$ is of dimension $g+2$, our assertion is proved.
\end{proof}

Thus we have the exact sequence
$$0 \to C(\xi ) \to W(\xi ) \to C(\eta )^*\to 0$$
as well as the dual exact sequence 
$$0 \to C(\eta ) \to W(\eta ) \to C(\xi )^*\to 0.$$

Using the natural imbedding of $C$ in $P\Gamma (C, \xi )^*$, we get a morphism of $C$ into the subvariety of the Grassmannian 
of $(g +1)$-dimensional subspaces in $W(\eta ) $ consisting of subspaces that contain $C(\eta )$.  

\begin{lemma}
For every $x\in C$, the corresponding element $G_x $ of the Grassmannian has as its orthogonal complement in $W(\xi )$
the subspace of $C(\xi )$ given by the image of the subspace of $\Gamma (C, \xi )$ which vanish at $x$.
\end{lemma}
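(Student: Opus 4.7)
The plan is to reduce the lemma to a dimension count plus a one-line orthogonality computation, using the two dual short exact sequences
\begin{equation*}
0 \to C(\eta) \to W(\eta) \to C(\xi)^* \to 0, \qquad 0 \to C(\xi) \to W(\xi) \to C(\eta)^* \to 0
\end{equation*}
just established, together with the fact that the pairing $W(\eta) \otimes W(\xi) \to k$ identifies each of these as the transpose of the other. First I count dimensions. Since $G_x$ is $(g+1)$-dimensional and contains $C(\eta)$, its orthogonal $G_x^\perp \subset W(\xi)$ has dimension $(2g+2)-(g+1) = g+1$ and is automatically contained in $C(\eta)^\perp = C(\xi)$ by the previous lemma. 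On the other side, since $\deg \xi = 2g+1 < 2g+2$ the restriction map $\Gamma(C,\xi) \to W(\xi)$ is injective, so the hyperplane $V_x = \{s \in \Gamma(C,\xi) : s(x) = 0\}$ is $(g+1)$-dimensional and its image in $W(\xi)$ also has dimension $g+1$ and lies in $C(\xi)$. Thus it suffices to show that the image of $V_x$ is contained in $G_x^\perp$.

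Next I unwind the definition of $G_x$. The embedding $C \hookrightarrow P\Gamma(C,\xi)^*$ sends $x$ to the line $L_x \subset \Gamma(C,\xi)^*$ spanned by evaluation at $x$, whose annihilator in $\Gamma(C,\xi)$ is precisely $V_x$. By construction, $G_x$ is the preimage of $L_x$ under the surjection $W(\eta) \twoheadrightarrow C(\xi)^* \cong \Gamma(C,\xi)^*$ coming from the first exact sequence. The crucial point is that, being transpose to the inclusion $C(\xi) \hookrightarrow W(\xi)$ under the pairing, this surjection is given explicitly by $v \mapsto \langle v, \cdot\rangle|_{C(\xi)}$. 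Granting this, the orthogonality is immediate: for any $v \in G_x$ the image $\ol{v} \in C(\xi)^*$ lies on $L_x$, so $\ol{v}$ is a scalar multiple of evaluation at $x$, and hence for every $s \in V_x$ we have $\langle v, s\rangle = \ol{v}(s)$ equal to that same scalar times $s(x) = 0$. Combined with the dimension match above, this forces the image of $V_x$ in $W(\xi)$ to coincide with $G_x^\perp$.

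The one step that needs genuine verification is the compatibility between the two descriptions of the isomorphism $W(\eta)/C(\eta) \simeq C(\xi)^*$: the one implicit in the construction of $G_x$ via the short exact sequence, and the one arising from the pairing. This is precisely what was built into the proof of the previous lemma, where the two sequences were exhibited as duals of each other under the pairing $W(\eta)\otimes W(\xi) \to k$ induced by multiplication $\eta\otimes \xi \to h^{2g}$ followed by the connecting map $W(h^{2g}) \to H^1(C, h^{g-1})$; once this is taken on board no further computation is required.
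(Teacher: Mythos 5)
Your proof is correct and takes essentially the same route as the paper, whose entire proof is the one line ``this follows from the duality of the above two exact sequences''; you have simply made explicit the dimension count ($\dim G_x^\perp = g+1 = \dim V_x$) and the fact that the surjection $W(\eta)\to C(\xi)^*$ defining $G_x$ is the transpose of the inclusion $C(\xi)\hookrightarrow W(\xi)$ under the pairing. The compatibility you flag as the one point needing verification is indeed already built into how the two exact sequences were derived from the pairing $W(\eta)\otimes W(\xi)\to k$ in the preceding lemma, so no gap remains.
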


\begin{proof}
This follows from the duality of the above two exact sequences.
\end{proof}

Given $x\in C$, we have the quadric $Q_x$ corresponding to the point in $P\Gamma (C, h) $ which vanishes at $x$. 
We will assume first that $x\notin W$.
Then $W(\eta ) \to W(\eta \otimes {\OO}(x))$ is an isomorphism.
 It is clear that $G_x$ is the same as $C(\eta \otimes {\OO}(x))$ imbedded in $W(\eta \otimes {\OO}(x)) = W(\eta )$. 
It gives rise to the map given as follows, as remarked above.  Consider the tensor product map  
$\Gamma (\eta \otimes {\OO}(x)) \times \Gamma (\iota ^*(\eta )\otimes {\OO}(\iota x))\to \Gamma (h^{2g})$ 
and compose this with $\Gamma (h^{2g}) \to W(h^{2g} )\to H^1(h^{g-1}) = k$. This is clearly $0$.
 Now $\iota ^*(\eta )\otimes {\OO}(\iota x) \simeq \xi \otimes h^{-1}\otimes {\OO}(\iota x)\simeq \xi \otimes {\OO}(-x)$. 
 The subspace $\Gamma (\eta \otimes {\OO}(x))$ is thus isomorphic to the space $H_x$ of sections of $\xi $
  vanishing at $x$. We conclude that with respect to the quadric corresponding to $x$, the spaces $G_x$  
  is a maximal isotropic subspace for the quadric $Q_x$ and contains $C(\eta )$ which is isotropic to all quadrics of the
pencil. 

This also gives every $x\in C$, one of the (two) components of the variety of maximal isotropic subspaces in the 
quadric $Q_x$.

\begin{remark}
If $x$ belongs to $W$,  $Q_x$ is of rank $2g +1$ and it is easy to check that $G_x$ is in this case the $(g+1)$-dimensional 
(that is, maximal) subspace contained in $Q_x$ and containing $C(\eta )$. It is the only such space in this case.
\end{remark}

To sum up, we have the diagram:\\
$$
\begin{array}{ccccccccc} 
&&&& 0&  & 0\cr
 &&&&\downarrow&&\downarrow  \cr
 & & & &N_x & \to & \xi _x & & \cr
               & & & & \downarrow& &\downarrow &&\cr
0&\to & C(\eta )  & \to & W(\eta ) & \to &  \Gamma (\xi )^* & \to & 0 \cr
                 & &\downarrow  & & \downarrow&& \downarrow &&\cr
0&\to & C(\eta \otimes {\OO}(x)) & \to & W(\eta \otimes {\OO}(x)) & \to & \Gamma (\xi \otimes {\OO}(-x))^* & \to & 0
\end{array} 
$$         
\vspace{5mm}

If $x\notin W, N_x = 0$. If $x\in W, N_x$ is 1-dimensional. In any case, the inverse image of $\xi _x$ in $W(\eta )$, namely $G_x$ is 
the same as the inverse image of $C(\eta \otimes {\OO}(x))$ by the map $W(\eta )\to 
W(\eta \otimes {\OO}(x))$. Hence the imbedding of $C$ in the linear system of $\xi $ is 
determined by $\{G_x, x\in C\}$. But then $G_x$ is the space determined by $x$ 
from among the two subspaces which are isotropic for $Q_x$, if $x\notin W$, and is the unique
maximal isotropic space in $Q_x$ if $x\in W$.  This implies that the space $\Gamma (\xi )^*$ as well as the natural imbedding of 
$C$ in its projective space are determined by $S(\eta )$. It is clear that we may replace $\eta $ in this argument  by any line bundle
$L(\alpha )$. 

This leads to the
\begin{prop} 
For every $\alpha \in J$, the image under $f$ of $S(\alpha )$ is $h\otimes \iota ^*(L(\alpha ))$.
\end{prop}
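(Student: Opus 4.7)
The plan is to recycle the entire analysis that has just been carried out for the line bundle $\eta$, with $L(\alpha)$ put in its place; the proposition then falls out by reading off what the morphism $C \to P(W(\eta)/S(\alpha))^*$ must be. The starting observation is that $W$ is pointwise fixed by $\iota$, so the factor $\alpha \otimes \iota^*(\alpha)^{-1}$ in $L(\alpha) = \alpha \otimes \iota^*(\alpha)^{-1} \otimes \eta$ is canonically trivial on $W$. This gives a canonical identification $W(L(\alpha)) = W(\eta)$, under which $S(\alpha) = C(L(\alpha))$ sits inside $W(\eta)$ as a $g$-dimensional subspace. Set $\xi_\alpha := h \otimes \iota^*(L(\alpha))$; this is the bundle the proposition predicts for $f(S(\alpha))$, and it plays the role of $\xi = h \otimes \iota^*(\eta)$.

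Next I would write down the two exact sequences that were used for $\eta$, now for $L(\alpha)$:
\[
0 \to C(L(\alpha)) \to W(L(\alpha)) \to C(\xi_\alpha)^* \to 0, \qquad 0 \to C(\xi_\alpha) \to W(\xi_\alpha) \to C(L(\alpha))^* \to 0.
\]
These are available because $\deg L(\alpha) = \deg \eta = 2g-1$ and $\deg \xi_\alpha = \deg \xi = 2g+1$, so the earlier dimension/duality calculation (using $L(\alpha) \otimes \iota^*(L(\alpha)) \simeq h^{2g-1}$) transports verbatim. In particular the duality between $C(L(\alpha))$ and $C(\xi_\alpha)$ via the connecting map $W(h^{2g}) \to H^1(C,h^{g-1})$ is exactly the one used before.

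Then for each $x \in C$, I form the analog of $G_x$ inside $W(\eta) = W(L(\alpha))$, namely the image of $C(L(\alpha) \otimes \OO(x))$ under the isomorphism $W(L(\alpha)) \simeq W(L(\alpha) \otimes \OO(x))$ obtained since $x \notin W$ generically (and the exceptional case $x \in W$ is handled by the same remark as before). The same tensor product calculation $\Gamma(L(\alpha) \otimes \OO(x)) \times \Gamma(\iota^*L(\alpha) \otimes \OO(\iota x)) \to \Gamma(h^{2g}) \to k$ shows this $G_x^\alpha$ is isotropic for $Q_x$, contains $S(\alpha)$, and is of dimension $g+1$; its orthogonal inside $W(\xi_\alpha)$ is the subspace of sections of $\xi_\alpha$ vanishing at $x$. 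This is exactly the data that identifies the morphism $C \to P(W(\eta)/S(\alpha))^*$ with the natural morphism $C \to P\Gamma(C,\xi_\alpha)^*$, so $\OO(1)$ pulls back to $\xi_\alpha$, giving $f(S(\alpha)) = \xi_\alpha = h \otimes \iota^*(L(\alpha))$.

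The only real obstacle is bookkeeping: one must keep the identifications $W(L(\alpha)) = W(\eta)$, $W(\xi_\alpha) = W(\xi)$, and $L(\alpha) \otimes \iota^*(L(\alpha)) \simeq h^{2g-1}$ \emph{canonical} (or at least compatible with the duality pairing) so that the whole chain of identifications is $k$-rational rather than merely valid over $\bar k$; once this is arranged, the argument is formally identical to the $\eta$-case already carried out and the conclusion follows with no new geometric input.
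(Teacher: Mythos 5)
Your proposal is correct and follows essentially the same route as the paper: the paper's own proof consists precisely of observing that the preceding analysis (the dual exact sequences for $C(\eta)$ and $C(\xi)$, the spaces $G_x$, and the identification of the resulting embedding of $C$ with the one into $P\Gamma(C,\xi)^*$) was carried out for an \emph{arbitrary} line bundle $\eta$ of degree $2g-1$, and then substituting $L(\alpha)$ for $\eta$ via the identification $W(L(\alpha)) = W(\eta)$ coming from $\iota$ acting trivially on $W$. Your additional care about keeping the identifications canonical and $k$-rational is a reasonable elaboration of the same argument rather than a different method.
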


This essentially proves the Main theorem. Firstly, the map $S: J^{2g -1}\to I_g$ is surjective, for given any $B\in I_g$, let $\alpha $
be any element of $J^{2g - 1}$ with $f(B) = h\otimes \iota ^*( L(\alpha ))$. 
Then $S(\alpha )$ and and $B$ have the same image under $f$. 
We have shown that in that case, $S(\alpha )$ and $B$ are taken
 one to another by an element of $J_2$. Thus modifying $\alpha $
 by this element of $J_2$, we see that $B$ is in the image of $S$. 
 In particular, this shows that the morphism $J^g \to I_g$ is onto, 
 and hence $I_g$ is irreducible. Finally, since the quotient under the
  free action of  $J_2$ on both $J^g$ and $I_g$ is $J^{2g -1}$, the map $S: J^g \to I_g$ is an isomorphism. \\

\subsection{Moduli of rank 2 vector bundles and the space of isotropic subspaces of the pencil}~

The interpretation of the moduli space of vector bundles of rank $2$ and determinant $\eta $, is similar. 
If $E$ is a stable vector bundle on $C$ of rank 2 and determinant $\eta $,
 then $E\otimes \iota ^*(E)$ is semistable with slope $>2g -2$ and hence  its first cohomology vanishes. Also the natural restriction map 
$r$ of $\Gamma (C, E\otimes \iota ^*E)$  (of dimension $8g-4+4(1-g) = 4g$) to $W(E\otimes E)$ (of dimension 
$8g + 8$) is injective. Similarly, we have a restriction map $r':\Gamma (C,  h\otimes E\otimes \iota ^*E)$ (of 
dimension $8g+4 + 4(1-g) = 4g+8$ into $W(h\otimes E\otimes E)$, which is also injective. 

The co-kernels of $r$ and $r'$ are respectively $H^1(C, h^{-g-1}\otimes E\otimes \iota^*E)$ and 
$H^1(C, h^{-g}\otimes E\otimes \iota ^*E)$. Since the dual of $E$ is isomorphic to $E\otimes \eta ^{-1}$,
it follows that the dual of $h^{-g-1}\otimes E\otimes \iota^*E$ is $h^{g + 1}\otimes 
E\otimes \iota ^*(E)\otimes \eta ^{-1} \otimes \iota ^*(\eta ^{-1}) = h^{-g+2}\otimes E\otimes \iota ^*(E)$. 
Hence we conclude that the cokernel of $r$ 
is dual to $\Gamma (C, h\otimes E\otimes \iota ^*(E))$.  Similarly the cokernel of $r'$ is 
$H^1(C, E\otimes \iota ^*(E) \otimes h^{-g})$ which is dual to $\Gamma (C, E\otimes \iota ^*(E))$. 

Note that all these vector spaces are acted on by $\iota $ and the maps respect 
the $\iota$-action so that we may replace them by the respective $-1$-eigenspaces to get two dual short exact sequences:
\begin{enumerate}
\item[1)] $0 \to \Gamma (C, E\otimes\iota ^*E)^- \to W(\eta ) \to (\Gamma (C, h\otimes E\otimes \iota^*E)^-)^*\to 0$.
\item[2)] $0 \to \Gamma (C, h\otimes E\otimes \iota ^*E)^- \to W(\xi ) \to (\Gamma (C, E\otimes \iota ^*E)^-)^*\to 0$.
\end{enumerate}

By Atiyah-Bott fixed point theorem, the trace of the action $\iota $ on $\Gamma (X, E\otimes \iota ^*E)$ is $(2g +2)$.
Hence dimension of $\Gamma (C, E\otimes\iota ^*E)^-$ is $1/2(4g - 2g -2) = g - 1$. 

The pencil maps $\Gamma (C, E\otimes\iota ^*E)^-$ into $\Gamma (C, h \otimes E\otimes\iota ^*E)^-$ 
and hence maps to 0 into its dual. In other words, we have associated to every stable
 vector bundle $E$ of rank 2 and determinant $\eta $, a subspace of $W(\eta )$ of dimension $g-1$, 
 which is isotropic to all the quadrics of the pencil.

\begin{theorem}
\label{app-main}
There is a natural isomorphism of the moduli space $M(2, \eta )$ of stable bundles of rank 2 and determinant $\eta $ with the space $I_{g-1}$ 
of $(g-1)$-dimensional isotropic subspaces of the associated pencil, under the assumption that $C$ has a rational point.s a rational point.
\end{theorem}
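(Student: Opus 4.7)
The map $\Phi : M(2,\eta) \to I_{g-1}$ has essentially been built in the preceding discussion: to a stable bundle $E$ of rank $2$ and determinant $\eta$ on $C$, one associates the $(g-1)$-dimensional subspace $\Gamma(C, E \otimes \iota^*E)^{-} \subseteq W(\eta)$, where the embedding uses the injectivity of the restriction map (stemming from the vanishing $H^1(C, h^{-g-1} \otimes E \otimes \iota^*E) = 0$), and isotropy for the pencil follows from the two dual short exact sequences displayed above together with the observation that the pencil sends $\Gamma(C, E \otimes \iota^*E)^{-}$ into $\Gamma(C, h \otimes E \otimes \iota^*E)^{-}$. The plan is to first upgrade this set-theoretic recipe to an honest $k$-morphism of schemes, then to establish bijectivity by reduction to the Desale--Ramanan theorem over the algebraic closure.

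First I would globalize the construction. The rational point on $C$ and the rationality of $\eta$ make it possible to choose a (possibly \'etale-local) universal family $\mathcal{E}$ on $M(2,\eta) \times C$, exactly as in the Poincar\'e bundle construction used earlier for the map $S : J \to I_g$. The sheaf $R^0p_{M*}\bigl(\mathcal{E} \otimes (1 \times \iota)^*\mathcal{E}\bigr)$ on $M(2,\eta)$ carries a natural $\iota$-action, and its $(-1)$-eigenspace is locally free of rank $g-1$; its rank is constant by the Atiyah--Bott fixed-point formula, which is insensitive to the field (as long as the characteristic is not $2$). Restriction to $W$ realizes this locally free sheaf as a subsheaf of the constant sheaf $W(\eta) \otimes \OO_{M(2,\eta)}$, giving a $k$-morphism
\[
\Phi : M(2,\eta) \longrightarrow \Gr(g-1, W(\eta)),
\]
whose image lies in the closed subscheme $I_{g-1}$ by the isotropy statement recalled above.

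Next I would check that $\Phi$ is an isomorphism. Both source and target are smooth quasi-projective $k$-varieties of the same dimension $3g-3$ (this dimension is classical for $M(2,\eta)$, and for $I_{g-1}$ it follows from the Desale--Ramanan description, or a direct count using the splitting of $V = \sum N_w$). Hence it suffices to verify that $\Phi_{\bar{k}}$ is bijective on $\bar{k}$-points and induces an isomorphism on tangent spaces at one point (smoothness of both sides then upgrades this to an isomorphism, and descent gives the result over $k$). The bijection at the level of $\bar{k}$-points is precisely the Desale--Ramanan theorem \cite{De-Ra}, proved over algebraically closed fields of characteristic zero; the construction in arbitrary characteristic $\ne 2$ is identical, since the only inputs are Riemann--Roch, the Atiyah--Bott formula, and the non-degeneracy statements about the pencil, all of which hold in this generality.

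The main obstacle I anticipate is verifying that the intrinsic cohomological recipe $E \mapsto \Gamma(C, E \otimes \iota^*E)^{-}$ coincides with the classical Desale--Ramanan correspondence, whose inverse goes via the variety of $g$-dimensional isotropic subspaces containing the given $A \in I_{g-1}$ and the associated conic bundle structure over $C$. Once this identification is unwound --- concretely, by observing that for $A = \Phi(E)$ the fiber over $x \in C$ of the induced conic bundle is canonically $\mathbb{P}(E_x)$, so that $E$ is recovered from $A$ as the rank-$2$ bundle whose projectivization is this conic bundle --- the injectivity and surjectivity on $\bar{k}$-points both follow from the statement in \cite{De-Ra}, and the theorem is complete.
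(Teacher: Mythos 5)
Your proposal is correct and follows essentially the same route as the paper: construct the eigenspace map $E \mapsto \Gamma(C, E\otimes\iota^*E)^-$ over $k$ and then invoke the Desale--Ramanan theorem over the algebraic closure to conclude it is an isomorphism, which descends. The paper's own proof is just this two-line reduction (it takes for granted that the association is literally the one in \cite{De-Ra}, so the identification you worry about at the end is already built in); your extra scaffolding with the universal family and tangent-space check is harmless elaboration of the same argument.
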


\begin{proof}
It is proved in \cite{De-Ra} that over the algebraic closure of $k$, this association gives an isomorphism of the moduli space of 
vector bundles of rank $2$ and determinant $\eta $ with the space of $(g-1)$-dimensional
 subspaces isotropic to the pencil. Hence the above map, defined over $k$, is also an isomorphism.
\end{proof}

If $\alpha \in J_2$ and we replace $E$ by $\alpha \otimes E$, the action of $\iota $ on $(\alpha \otimes E) \otimes \iota ^*(\alpha \otimes E)
 = \alpha \otimes \iota ^*\alpha \otimes E\otimes \iota^*E$ is the same as on $E\otimes \iota^*E$ 
since its action on $\alpha \otimes \iota^*\alpha $ is trivial. Hence $\Gamma (E\otimes \iota ^*E)^-$ and $\Gamma 
((\alpha \otimes E)\otimes \iota ^*(\alpha \otimes E))^-$ are isomorphic. On the other hand, as we have seen in \ref{the-maps-s}, 
the trivialisation of  $\alpha \otimes \iota^*\alpha $ restricts (over the algebraic closure) to $-1$ on a subset of $W$ of even cardinality and $+1$ in the complement. Hence we have 

\begin{prop}
\label{actions}
The isomorphism in \ref{app-main} is compatible with
the action of $J_2$ on  $M(2, \eta )$ and that on $I_{g-1}$.
\end{prop}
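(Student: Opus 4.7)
The plan is to explicate the paragraph preceding the proposition into a careful proof. The isomorphism of \ref{app-main} sends a stable bundle $E$ of rank $2$ and determinant $\eta$ to the subspace $S(E) \subset W(\eta)$ obtained by restricting $\Gamma(C, E \otimes \iota^*E)^-$ to $W$, and I need to identify the effect of tensoring $E$ by $\alpha \in J_2$ on $S(E)$ with the action of $\theta_Q(\alpha) \in \AAut^+_Q$ defined in Proposition \ref{autpic}.

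First I would use the tautological identification
\[
(\alpha \otimes E) \otimes \iota^*(\alpha \otimes E) \;=\; (\alpha \otimes \iota^*\alpha) \otimes (E \otimes \iota^*E)
\]
together with the fact that, since $\alpha$ has degree zero, $\alpha \otimes \iota^*\alpha$ is canonically trivial. For $\alpha \in J_2$, one may pick a trivialization $\varphi : \alpha \otimes \iota^*\alpha \to \OO_C$ with $\varphi^{\otimes 2} = \mathrm{id}$, which pins $\varphi$ down up to a global sign. Since $\varphi$ intertwines the natural $\iota$-actions on its source and target, tensoring with $\varphi$ identifies $\Gamma(C, (\alpha \otimes E) \otimes \iota^*(\alpha \otimes E))^-$ with $\Gamma(C, E \otimes \iota^*E)^-$ as abstract vector spaces. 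After restricting to $W$ in order to land in $W(\eta)$, however, the two resulting subspaces of $W(\eta)$ differ by multiplication by the restriction $\varphi|_W \colon W \to \{\pm 1\}$.

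Next I would invoke the fact, recalled in the paragraph before the statement and going back to Lemma 2.1 of \cite{NR}, that $\varphi|_W$ takes the value $-1$ on a subset $W_\alpha \subset W$ of even cardinality and $+1$ on its complement. Hence $S(\alpha \otimes E) = \mathrm{diag}(\varphi|_W) \cdot S(E)$, where $\mathrm{diag}(\varphi|_W)$ is a diagonal involution of $W(\eta)$ of determinant $1$. Viewed modulo $\pm \mathrm{id}$, this element lies in $\AAut^+_Q$ under the explicit realization used in the proof of \ref{reed} (diagonal $\pm 1$ matrices of determinant one, modulo the center).

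Finally, it remains to verify that the homomorphism $J_2 \to \AAut^+_Q$ sending $\alpha \mapsto \mathrm{diag}(\varphi|_W)$ agrees with $\theta_Q$ of Proposition \ref{autpic}. Both are group homomorphisms, so after extending scalars to $k^s$, equality need only be checked on the generators $\alpha_i = \OO_C(P_{2g+2} - P_i)$ of $\Pictwo C_{k^s}$. For each such $\alpha_i$, the sign pattern of $\varphi|_W$ equals $-1$ exactly at $\{P_i, P_{2g+2}\}$ and $+1$ elsewhere, which is precisely the combinatorial recipe for $\theta_Q(\alpha_i)$ recorded in the proof of \ref{autpic}. The main obstacle is exactly this last comparison, which is a bookkeeping exercise aligning the trivialization coming from the line bundle $\alpha_i$ with the simultaneous diagonalization of the pencil by the eigenvectors $v_j \in N_{P_j}$ used to define $\theta_Q$; once this match is established, the $J_2$-equivariance of the isomorphism $M(2, \eta) \simeq I_{g-1}$ follows.
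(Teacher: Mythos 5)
Your argument is correct and follows the same route as the paper: the paper's proof is exactly the paragraph preceding the proposition, resting on the identification $(\alpha\otimes E)\otimes\iota^*(\alpha\otimes E)\cong \alpha\otimes\iota^*\alpha\otimes E\otimes\iota^*E$, the triviality of the $\iota$-action on $\alpha\otimes\iota^*\alpha$, and the fact that the square-one trivialization of $\alpha\otimes\iota^*\alpha$ restricts on $W$ to $-1$ on a subset of even cardinality, so that $S(\alpha\otimes E)$ is the image of $S(E)$ under the corresponding diagonal automorphism of the pencil. Your closing comparison of the resulting sign pattern with $\theta_Q$ from Proposition~\ref{autpic} (with $-1$ exactly at $P_i$ and $P_{2g+2}$ for the generator $\OO_C(P_{2g+2}-P_i)$) is a correct explicit check that the paper leaves implicit.
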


\begin{remark} 
In a similar manner, one can  interpret $I_d$ for other $d\leq g - 2$ in terms of bundles on $C$, on the lines of the results of \cite{Ra1}, over the field $k$ itself.
\end{remark}

\providecommand{\bysame}{\leavevmode\hbox to3em{\hrulefill}\thinspace}

\end{document}